\newcommand{\cA}{\mathcal{A}}
\newcommand{\T}{\mathcal{T}}
\newcommand{\Z}{\mathcal{Z}}
\newcommand{\mf}{\mathfrak}
\newcommand{\g}{\mf{g}}
\newcommand{\im}{{\rm im}}
\newcommand{\id}{{\rm id}}
\newcommand{\tnz}{\otimes}
\newcommand{\ksi}{\xi}
\newcommand{\tr}{\mathrm{tr}}
\newtheorem{theorem}{Theorem}[section]
\newtheorem{proposition}[theorem]{Proposition}
\newtheorem{lemma}[theorem]{Lemma}
\newtheorem{corollary}[theorem]{Corollary}
\theoremstyle{remark}
\newtheorem{remark}[theorem]{Remark}
\newtheorem{definition}[theorem]{Definition}            
\newtheorem{example}[theorem]{Example}   
\newtheorem{question}[theorem]{Question}
\newcommand{\R}{\mathfrak{R}}
\newcommand{\A}{\mathcal{A}_R }
\newcommand{\B}{\mathcal{B}}
\newcommand{\N}{\mathbb{N}}
\newcommand{\C}{\mathcal C}
\newcommand{\QQ}{ \mathcal{C}\bigl\langle X \bigr\rangle }
\begin{document}
\title[Quasi-identities on matrices and the Cayley-Hamilton polynomial]{Quasi-identities  on matrices and the Cayley-Hamilton polynomial}

\author{Matej Bre\v sar}
\author{Claudio Procesi}
\author{\v Spela \v Spenko}
\address{M. Bre\v sar,  Faculty of Mathematics and Physics,  University of Ljubljana,
 and Faculty of Natural Sciences and Mathematics, University
of Maribor, Slovenia} \email{matej.bresar@fmf.uni-lj.si}
\address{C. Procesi, Dipartimento di Matematica, Sapienza
 Universit\` a di Roma,  Italy}  \email{procesi@mat.uniroma1.it}
\address{\v S. \v Spenko,  Institute of  Mathematics, Physics, and Mechanics,  Ljubljana, Slovenia} \email{spela.spenko@imfm.si}

\begin{abstract} We consider certain functional identities  on the matrix algebra $M_n$ that are  defined similarly as the trace identities, except that  the ``coefficients"  are arbitrary polynomials, not necessarily those expressible by the traces.  The main issue is 
 the question of whether such an identity is a consequence of the Cayley-Hamilton identity. We show that the answer is 
  affirmative 
  in several special cases, and, moreover, for every such an identity $P$ and every central polynomial $c$ with zero constant term  there exists $m\in\N$ such that the affirmative  answer holds for $c^mP$. In general, however, the answer is negative. We prove that there exist  antisymmetric identities that do not follow from the Cayley-Hamilton identity, and give a complete description of    a certain family  of such identities.
\end{abstract}

\keywords{Functional identity, quasi-polynomial, quasi-identity, Cayley-Hamilton identity, T-ideal, trace identity,  polynomial identity, matrix algebra, algebra with trace, Azumaya algebra.}
\thanks{2010 {\em Math. Subj. Class.} Primary 16R60. Secondary 16R10, 16R30. }
\thanks{The first author was supported by ARRS Grant P1-0288.}
\maketitle
\section{Introduction}     Given a finite dimensional algebra $A$ and an  integer $m$ (or $\infty$),  let  $\mathcal C$ be the commutative  ring of polynomial functions on $m$ copies of $A$ and $ \mathcal C\langle X \rangle $  the free algebra  in $m$ variables $X=\{x_1,\ldots,x_m\}$. We call this algebra the algebra of {\em quasi-polynomials} of $A$. Elements of $\mathcal C\langle X \rangle$ can clearly be {\em evaluated} in $A$  and then the {\em quasi-identities of $A$} are those quasi-polynomials that vanish at all evaluations (see Section \ref{sec1}).

In this paper we consider the fundamental case where  $A=M_n(F)$, the algebra of matrices. We will see that quasi-identities  appear in a natural way  as linear relations  among the noncommutative polynomial functions on $A$. In this sense the theory of quasi-identities is a worthwhile generalization to the theory of polynomial identities of $A$.
\vspace{0.13cm}

 Quasi-identities  appear as  a class of   {\em functional identities}. Let us give a brief  background on this more general notion.
A functional identity   is an identical relation in a ring that, besides  arbitrary elements  that appear in a similar fashion as in a polynomial identity, also involves arbitrary functions which are considered as unknowns. 
The goal of the general functional identity theory is to describe these functions. 
It has 
  been developed with  applications in mind. 
Starting with the solution of a long-standing Herstein's problem on Lie isomorphisms in 1993 \cite{B}, functional identities have since turned out to be applicable to various problems in noncommutative algebra, nonassociative algebra, operator theory, functional analysis, and   mathematical physics. We refer the reader to the book \cite{FIbook} for an account of  functional identities and their applications.

Given a functional identity,  one usually first finds its ``obvious" solutions, i.e., those functions that satisfy this identity for formal reasons, independent of the structure of the ring in question. These are called the {\em standard solutions}. A typical result states that either the standard solutions are in fact the only possible solutions or the ring has some special properties, like satisfying a polynomial identity of a certain degree related to the number of variables. The existing theory of functional identities, as surveyed in \cite{FIbook}, thus gives definitive results for a large class of noncommutative rings, but, paradoxically, tells us nothing about the basic example of a noncommutative ring, i.e., the matrix algebra $M_n=M_n(F)$ (unless $n$ is big enough).   This is reflected in applications -- one usually has to exclude $M_n$ (for ``small"  $n$) in  a variety of results whose proofs depend on the general theory of functional identities, although 
by the nature of these results one can conjecture that this exclusion is unnecessary (see \cite{BB, BBS, BBCM, BC2, BF} for typical examples).   The problem with $M_n$ is that it allows {\em nonstandard solutions}. Their description seems to be a much harder problem than the description of standard solutions. Moreover, it is not clear what methods could be of use.
 
To the best of our knowledge, the recent paper \cite{BS} is the first work giving complete results on functional identities on $M_n$. However, it treats only 
functional identities in one variable. 
\vspace{0.1cm}

In this paper we restrict ourselves to the study of quasi-identities, which are important examples of functional identities in several variables.  
 Quasi-polynomials, also called Beidar polynomials in some papers, were introduced  in 2000 by Beidar and Chebotar  \cite{BC}, and have since played a fundamental role in  the theory of functional identities  and its applications.
 Standard solutions of quasi-identities can be  very easily described: all coefficient functions must be $0$ (cf. \cite[Lemma 4.4]{FIbook}). 
The Cayley-Hamilton theorem gives rise to a basic example of a quasi-identity on the matrix algebra $M_n$ with nonstandard solutions. We call it the  Cayley-Hamilton  identity. The main theme of this paper is the following   question to which we have addressed ourselves: 
\vspace{0.13cm}

{\bf Question.} {\em Is every quasi-identity of $M_n$ a consequence of the Cayley-Hamilton  identity?}

 (A more accurate formulation will be given in the next section.)
\vspace{0.13cm}

 An important  motivation  for this question is the well-known  theorem, proved independently by Procesi \cite{Proc} and Razmyslov \cite{Raz},  saying that the answer to such a question is positive 
 for the related {\em trace identities}. Further,  from the main result of \cite{BS} it is evident that  nonstandard solutions of functional identities in one variable follow from the Cayley-Hamilton identity. 
 
 The main goal of this paper is to show that the answer to the above question is {\bf negative} in general.  The space $  \mathfrak{I}_n/(Q_n) $ of quasi-identities modulo the subspace of those quasi-identities which follow from the Cayley-Hamilton identity is determined in two steps by Proposition \ref{ls} and by Corollary \ref{chi}  through the exact sequence \eqref{kk}.  This already points out the geometric nature of the question, one sees that $ \mathfrak{I}_n/(Q_n) $ is an interesting invariant of the quotient map of the action of the projective linear group acting, by simultaneous conjugation, on the space of $m$-tuples of matrices. The space $  \mathfrak{I}_n/(Q_n) $  appears as a module on the quotient variety, supported on the singular set, cf. Theorem \ref{cp2}.   
Still  the complexity of this quotient map makes it difficult to describe this module and even to decide in a simple way if it is nonzero. 
Thus we restrict ourselves to describing a  particular subspace  of this module.  
We show that there exist antisymmetric quasi-identities of $M_n$
of degree $n^2$ that are not a consequence of the Cayley-Hamilton  identity. In fact in Theorem \ref{main}, which is the main result of the paper,   we give a precise description of those quasi-identities which transform under the linear group as the adjoint representation.
 In order to achieve this result, we first have   Theorem \ref{thb} of independent interest, which describes
 the way in which the adjoint representation of  the simple Lie algebra $\mathfrak g$ of traceless matrices sits in the exterior algebra $\bigwedge \mathfrak g$ of the same  Lie algebra $\mathfrak g$. 
 The discovery of this remarkable phenomenon has been the starting point for a general theorem for all simple Lie algebras, as shown in \cite{DPP}, and it also  gives  an insightful explanation of the basic theorem on identities of matrices, namely the Amitsur-Levitzki identity \cite{PA}.\smallskip

{\em Outline.} The paper is organised as follows.  In Section \ref{sec1} we collect notation, recall some basic facts of the theory of polynomial identities of matrices, and develop some basic formalism on functional identities of matrices.
 
   In Section \ref{sec2} we give the  structure of the main object of study, the  space $  \mathfrak{I}_n/(Q_n) $ of quasi-identities modulo the subspace of those quasi-identities which follow from the Cayley-Hamilton identity, in terms of the basic exact sequence \eqref{kk}. 
   From this we   prove   that for every quasi-identity $P$ and every central polynomial $c$ with zero constant term  there exists $m\in\N$ such that $c^m P$ is a  consequence of the Cayley-Hamilton  identity. This implies that the module $  \mathfrak{I}_n/(Q_n) $  is supported on the singular set. 
   
  Section \ref{sec3} is the most technical part  in  which a detailed study of  the  antisymmetric quasiidentities in degree $n^2$ is performed and the main Theorem \ref{main} is proved.

   Finally, Section \ref{sec4} is devoted to  positive results to the above question in various special cases. Thereby we  indicate that finding a quasi-identity that is not a consequence of  the Cayley-Hamilton  identity  can not be achieved in a simple minded way. 
  
 We finish the paper by giving a positive solution to Specht problem for quasi-identities, i.e., we  show that the 
   T-ideal of quasi-identities is finitely generated.

\section{Preliminaries }\label{sec1}
\subsection{Theory of identities\label{PI}}
 Before we enter in the main theme of this paper let us quickly review some basic facts  of the classical theory of identities. 
For more details we refer the reader to \cite{dr-for,for4,Proc3,Row}.

\subsubsection{Polynomial identities} Polynomial identities  appear in the formalism of {\em universal algebra}.  Whenever we have some category of algebras which admits {\em free algebras}  one has the concept of identities  in $m$ variables (where $m$ can also be $\infty$),   of an algebra $A$. That is the ideal of the free algebra $\mathcal F_m$ in $m$ variables $x_k$ formed by those element which vanish for all evaluations of the  variables $x_k$ into elements $a_k\in A$. An ideal of identities is a T-ideal, i.e., an ideal of the free algebra closed under substitution of the variables $x_k$ with elements  $H_i$ of the same free algebra. It is easily seen that a T-ideal $\mathcal I\subset \mathcal F_m$ is automatically the ideal of identities of an algebra, namely  $\mathcal F_m/ \mathcal I$.  Recall that we say that a T-ideal $\mathcal I$ is generated as T-ideal by a subset $I$ if it is the minimal T-ideal containing $I$. That is, it is generated as an ideal by all subsets obtained from $I$ applying substitution of variables with elements of the free algebra.

Of particular interest is  the case of noncommutative associative algebras over a field $F$,  for which we assume, for simplicity, that 
$${\rm char}(F)=0$$
(this assumption will be used throughout the paper without further mention).
 In this case the free algebra in  $m$ variables $x_k$ is the usual algebra of noncommutative polynomials with basis the {\em words} in the variables $x_k$. For $m =\infty$ we set
\begin{equation}
\label{lalf}{X} := \{x_k\,|\, k=1,2,\ldots\},\quad F\langle X\rangle\quad \text{the free algebra}. 
\end{equation}
In this case a particularly interesting example is the theory of polynomial identities of  the algebra $M_n=M_n(F)$ of all $n\times n$ matrices over  the field $F$.  An implicit description of these identities is given through the {\em algebra of generic matrices}.

We fix an integer $n\ge 1$, and set
\begin{equation}
\label{ilC}\mathcal{C}:=  F\bigl[x_{ij}^{(k)}\,|\, 1\le i,j\le n, k=1,2,\ldots\bigr].
\end{equation}
  This commutative polynomial ring is the algebra of polynomial functions  on sequences of matrices.
    Inside the matrix algebra $M_n(\mathcal C)$  we can define the {\em generic matrices}  $\xi_k$ where $\xi_k$ is the matrix with entries the  variables $x_{ij}^{(k)}$. It is then easily seen, since $F$ is assumed to be infinite,  that the ideal of polynomial identities of  $M_n(F)$  is the kernel of the evaluation map $x_k\mapsto \xi_k$.
  
 The $F$-subalgebra  of $M_n(\mathcal C)$ generated by the $\xi_k$, i.e., the image of the free algebra under this evaluation, is the free algebra in the $\xi_k$ in the category of noncommutative algebras satisfying the identities of   $M_n(F)$. This algebra has been extensively studied although a very precise description is available only for $n = 2$. We shall denote it by   $F\langle \xi\,|\,n\rangle$ or just   $F\langle \xi_k \rangle$ if the integer $n$ is fixed, 
and call  it the {\em algebra of generic matrices}.

\subsubsection{Trace identities}\label{TR} When dealing with matrices in characteristic 0, it is useful to think that they form an  algebra with a further  unary operation the {\em trace}, $x\mapsto \tr(x)$. One can formalize this as follows.

An {\em algebra with trace}  is an algebra $\R$ equipped with an additional structure, that is a linear map $\tr:\R\to \R$ satisfying the following properties
$$ \tr(ab)=\tr(ba),\quad a\,\tr(b)=\tr(b)\,a,\quad \tr(\tr(a)b)=\tr(a)\tr(b) $$
for all $a,b\in \R.$
The notion of a morphism between algebras with trace is then obvious and such algebras form a category which contains free algebras. 

 In this case the free algebra is the algebra of noncommutative polynomials with   basis the {\em words} in the variables $x_k$ but over the polynomial ring $\mathfrak T$  in the infinitely many variables  $\tr(M)$, where $M$ runs over all possible words considered equivalent under cyclic moves  (i.e., $ab\sim ba$).
  
  In this setting  again the {\em trace identities} of matrices are the kernel of the evaluation of the free algebra  into the generic matrices, but now the image is the subalgebra $ \mathcal T_n\langle \xi_k\rangle$ of  $M_n(\mathcal C)$  generated by the generic matrices and the algebra $\mathcal T_n$, the image of $\mathfrak T$,  generated by all traces of the  monomials in the $\xi_k$.\smallskip
  
  It is a remarkable fact that in this setting  both the trace identities and the {\em free} algebra $\mathcal T_n\langle \xi_k\rangle $  can be interpreted in the language of the first and second fundamental theorem for matrices (FFT and SFT).
  
  We have the projective linear group $G:=PGL(n,F)$ acting by conjugation on matrices and hence also on sequences of matrices, and we have (see \cite[Chapter 11]{Proc3}):
  
  \begin{theorem}\label{FST}
  \begin{enumerate}
  \item[
FFT:] The algebra $\mathcal T_n  $ is the algebra of $G$-invariant polynomial functions on the space of  sequences of matrices.
The algebra $\mathcal T_n\langle \xi_k\rangle $ is the algebra of $G$-equivariant polynomial maps from  sequences of matrices to matrices.

\item[
SFT:] The ideal of trace identities on $n\times n$ matrices is generated, as a T-ideal, by the Cayley-Hamilton polynomial.\end{enumerate}
\end{theorem}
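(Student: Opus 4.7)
The plan is to derive both halves of Theorem \ref{FST} from the classical invariant theory of the general linear group $GL(V)$, where $V = F^n$. First identify $M_n(F) = \mathrm{End}(V) = V \otimes V^*$, so that $m$ copies of $M_n$ become the $GL(V)$-module $(V \otimes V^*)^{\oplus m}$; since $V$ and $V^*$ have opposite determinantal characters, the center $F^\times \subset GL(V)$ acts trivially, so this is a genuine $G = PGL(n,F)$-action matching simultaneous conjugation. Weyl's first fundamental theorem for $GL(V)$ then says that $GL(V)$-invariants on a direct sum of copies of $V$ and $V^*$ are generated by the contraction pairings $\langle v, \varphi \rangle = \varphi(v)$.

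For the FFT half, a multihomogeneous invariant on $(V \otimes V^*)^{\oplus m}$ of total degree $k$ is a linear combination of complete contractions between the $k$ extracted $V$-factors and the $k$ extracted $V^*$-factors, each parametrised by a permutation $\sigma \in S_k$. Reading the $V$/$V^*$ pairings along the cycles of $\sigma$ converts the contraction into a product of traces of monomials in the selected generic matrices $\xi_i$, exhibiting every invariant as an element of $\mathcal{T}_n$. For the equivariant statement one applies the same analysis to $\mathrm{Hom}_G\bigl(U, V \otimes V^*\bigr) \cong \bigl(U^* \otimes V \otimes V^*\bigr)^G$ with $U = (V \otimes V^*)^{\oplus m}$: the additional $V$ and $V^*$ coming from the target sit in a distinguished ``open'' cycle that produces a monomial in the $\xi_i$, while the remaining cycles supply a trace-polynomial coefficient, yielding an element of $\mathcal{T}_n\langle \xi_k\rangle$.

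For SFT, the first step is multilinearization (valid in characteristic zero), which reduces the question to describing the multilinear trace identities in each degree $k$. By the cycle-to-trace dictionary above, a multilinear trace polynomial of degree $k$ is indexed by a permutation $\sigma \in S_k$, and under Schur--Weyl duality this identifies multilinear trace functions with the image of $F[S_k]$ acting on $V^{\otimes k}$; consequently the multilinear trace identities of degree $k$ form exactly $\ker\bigl(F[S_k] \to \mathrm{End}(V^{\otimes k})\bigr)$, which vanishes for $k \leq n$ and, for $k \geq n+1$, is the two-sided ideal generated by the full antisymmetrizer $e_{n+1} = \sum_{\sigma \in S_{n+1}} \mathrm{sgn}(\sigma)\,\sigma$. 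The main obstacle, and the crux of the proof, is to recognize $e_{n+1}$, under the cycle-to-trace dictionary, as a multilinearization of the Cayley--Hamilton polynomial: one polarizes $\chi_x(x) = \sum_{j=0}^n (-1)^j e_j(x)\,x^{n-j}$ in $n+1$ variables, uses Newton's identities to rewrite the $e_j$ as trace monomials, and matches the resulting fully linearized expression term by term with the polynomial attached to $e_{n+1}$. Once this is verified, the $F[S_k]$-bimodule structure of the kernel precisely encodes T-ideal substitution invariance, and SFT follows; the bulk of the original work of Procesi and Razmyslov resides in this last combinatorial translation.
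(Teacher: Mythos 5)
The paper does not prove Theorem~\ref{FST}; it records it as classical, citing \cite[Chapter 11]{Proc3} (and \cite{Proc}, \cite{Raz} for the SFT). Your sketch reproduces the standard Schur--Weyl argument found in that reference and is correct in outline: Weyl's FFT for $GL(V)$ on $(V\otimes V^*)^{\oplus m}$ plus the cycle-to-trace dictionary gives the FFT, and the identification of the kernel of $F[S_k]\to\mathrm{End}(V^{\otimes k})$ with the two-sided ideal generated by the antisymmetrizer $e_{n+1}$, together with the recognition of $e_{n+1}$ as the fully polarized Cayley--Hamilton polynomial (which is exactly the paper's formula~\eqref{ch}), gives the SFT. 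The one point you gloss over is the passage from ``two-sided ideal of $F[S_k]$'' to ``T-ideal of the free trace algebra'': left and right multiplication by $F[S_k]$ does not by itself account for substitutions that raise degree (replacing a variable by a monomial), so one still has to check that the family of kernels over all $k$ is generated by $e_{n+1}$ under the full set of T-ideal operations, not just the bimodule action in fixed degree. You flag this as the combinatorial crux, which is fair, but be aware it is a genuine additional verification and not an automatic consequence of the bimodule statement.
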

 Another way of stating the FFT is by noticing that $G$ acts on $\mathcal C$  by $gf(x):=f(g^{-1}x)$ and on $M_n(F)$ by conjugation, hence it acts on $M_n(\mathcal C)= \mathcal C\otimes_FM_n(F)$ and we have
 $$\mathcal T_n\langle \xi_k\rangle = M_n(\mathcal C)^G,\quad \mathcal T_n=  \mathcal C^G.$$ Notice that as soon as $n\geq 2$  the algebra $\mathcal T_n$ is the center of $\mathcal R_n$.
 
 The FFT  for matrices is essentially classical, as for the SFT, Procesi \cite{Proc} and Razmyslov \cite{Raz} proved that  the T-ideal of trace identities of $M_n$ is generated by $Q_n(x_1,\ldots,x_n)$ and $\tr(Q_n(x_1,\ldots,x_n)x_{n+1})$, where $Q_n$ is the multilinear {\em Cayley-Hamilton polynomial}. Let us recall that 
  the  Cayley-Hamilton polynomial is 
$$
q_n=q_n(x_1) = x_1^n  +  \tau_1(x_1)x_1^{n-1} + \cdots + \tau(x_1). 
$$
 As it is well-known, each $\tau_i(x_1)$ can be expressed (in characteristic 0)  as a $\mathbb{Q}$-linear combination of the products of  ${\rm tr}(x_1^j)$.  
Evaluating in $M_n(C)$ we have $\tau_1(\xi_1) =- {\rm tr}(\xi_1) =- (x_{11}^{(1)} + \cdots + x_{nn}^{(1)}), \ldots, \tau_n(\xi_1)=(-1)^n\det(\xi_1)$. 
Now, $Q_n(x_1,\ldots,x_n)$ denotes the multilinear version of $q_n(x_1)$ obtained by full polarization. Recall that it can be written as
\begin{equation}
\label{ch}Q_n:=\sum_{\sigma\in S_{n+1}}\epsilon_\sigma\phi_\sigma(x_1,\ldots,x_n)
\end{equation} where $\epsilon_\sigma=\pm 1$ denotes the sign of the permutation $\sigma$, while  $\phi_\sigma$ is defined using the cycle decomposition of $$\sigma=(i_1,\ldots, i_{k_1})(j_1,\ldots ,j_{k_2})\ldots (u_1,\ldots ,u_{h})(s_1,\ldots s_{k }, n+1)$$ as
 $$\phi_\sigma(x_1,\ldots,x_n) =\tr(x_{i_1} \cdots x_{i_{k_1}})\tr(x_{j_1} \cdots x_{j_{k_2}})\cdots \tr(x_{u_1} \cdots x_{u_{h}})x_{s_1}\cdots x_{s_{k }}.$$
 Thus, for example, $$Q_2(x_1,x_2) = x_1x_2 + x_2x_1 -\tr(x_1)x_2 - \tr(x_2)x_1 + \tr(x_1)\tr(x_2)- \tr(x_1x_2) .$$
Note that $Q_n(x_1,\ldots,x_n)$ is symmetric, i.e.,  $Q_n(x_1,\ldots,x_n) = Q_n(x_{\sigma(1)},\ldots,x_{\sigma(n)})$ for every permutation $\sigma$, and that $q_n(x_1) = \frac{1}{n!} Q_n(x_1,\ldots,x_1)$. By a slight abuse of terminology, we will call both $Q_n$ and $q_n$ the  Cayley-Hamilton polynomial, or, when associated with $M_n$, the {\em Cayley-Hamilton identity}. In view of the terminology introduced below, more accurate names in the setting of this paper may be the Cayley-Hamilton quasi-polynomial (resp. quasi-identity), but  we omit ``quasi" for simplicity. 

\subsubsection{Central polynomials}\label{cepo}
Recall that an element of the free algebra $F\langle X\rangle$ is a {\em central polynomial} for $n\times n$ matrices if  it takes scalar values under any evaluation into matrices.  It is then clear that the center, denoted  $\Z_n$, of   the algebra  $F\langle \xi_k \rangle $  of generic matrices, is the image of the set of central polynomials.  A basic discovery based on the existence of central polynomials found independently by Formanek \cite{formanek3} and Razmyslov \cite{Raz73} is that the center $\Z_n$ is rather large.  Then fundamental theorems of PI theory tell us that the central  quotient of   $F\langle \xi_k \rangle $ and of  
   $\mathcal T_n\langle \xi_k\rangle  $ coincide  and as soon as $n\geq 2$  give rise to a division algebra of rank $n^2$ over  its center which is the field of quotients  of both $\Z_n$ and $\mathcal T_n$.
   

The following theorem gathers together some known facts, but we recall them for completeness. 
\begin{theorem}\label{Proce}
 If $c\in \Z_n$ has zero constant term, then
 $F\langle \xi_k \rangle [c^{-1}]=\mathcal T_n\langle \xi_k\rangle [c^{-1}]$ is a rank $n^2$  Azumaya algebra over its center $\Z_n[c^{-1}]=\T_n[c^{-1}]$.
Moreover, $$M_n(\mathcal{C}[c^{-1}])\cong \mathcal{C}[c^{-1}]\otimes_{\Z_n[c^{-1}]} \mathcal T_n\langle \xi_k\rangle [c^{-1}].$$
\end{theorem}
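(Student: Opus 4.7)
My plan is to deduce the three assertions from the Artin--Procesi characterization of Azumaya algebras among prime PI rings, combined with the invariant-theoretic description of $\mathcal{T}_n$ and $\mathcal{T}_n\langle \xi_k\rangle$ recorded in Theorem~\ref{FST}. The common thread is that inverting any central polynomial with zero constant term restricts us to the \emph{Azumaya locus} in $\mathrm{Spec}(\Z_n)$, on which all three assertions are standard.

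First I would establish the equality $\Z_n[c^{-1}]=\T_n[c^{-1}]$, and with it the equality of the two generic algebras. The preceding paragraph records that $\Z_n\subset \T_n\subset K$ where $K$ is the common field of fractions. Hence every trace $\tr(M)$ of a monomial in the $\xi_k$ has the form $p/s$ with $p,s\in\Z_n$ and $s\neq 0$. By the Formanek--Razmyslov theory of central polynomials the Azumaya ideal of $\Z_n$ — the ideal cutting out the locus where $F\langle \xi_k\rangle$ fails to be Azumaya of rank $n^2$ — is generated by evaluations of central polynomials with zero constant term and coincides with the ideal defining the locus where $\T_n\neq \Z_n$. Any $c$ as in the hypothesis lies in this ideal, so $\Z_n[c^{-1}]=\T_n[c^{-1}]$. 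Tensoring the inclusion $F\langle\xi_k\rangle\subset\mathcal{T}_n\langle\xi_k\rangle$ over $\Z_n$ with $\Z_n[c^{-1}]$ — and using that $\mathcal{T}_n\langle\xi_k\rangle$ is generated over $F\langle\xi_k\rangle$ by $\T_n$ — yields $F\langle\xi_k\rangle[c^{-1}]=\mathcal{T}_n\langle\xi_k\rangle[c^{-1}]$.

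Next I would apply Artin--Procesi: the localization $F\langle\xi_k\rangle[c^{-1}]$ is a prime PI ring of PI degree $n$ whose center contains the inverse of a nontrivial central polynomial with zero constant term, and this is precisely the Artin--Procesi criterion (evaluations of Formanek central polynomials generate the unit ideal) for the algebra to be Azumaya of rank $n^2$ over its center. For the final isomorphism I would use that by Theorem~\ref{FST} the inclusion $\mathcal{T}_n\langle\xi_k\rangle\hookrightarrow M_n(\mathcal{C})$ is the inclusion of $G$-equivariant maps, and hence induces after localization a natural algebra homomorphism
\[
\mathcal{C}[c^{-1}]\otimes_{\Z_n[c^{-1}]}\mathcal{T}_n\langle\xi_k\rangle[c^{-1}]\longrightarrow M_n(\mathcal{C}[c^{-1}]).
\]
Both sides are Azumaya of rank $n^2$ over $\mathcal{C}[c^{-1}]$: the right side trivially, the left as a base change of the Azumaya algebra just obtained along the extension $\Z_n[c^{-1}]=\mathcal{C}[c^{-1}]^G\hookrightarrow\mathcal{C}[c^{-1}]$ (faithfully flat by classical invariant theory). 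A morphism of Azumaya algebras of equal rank over a connected base that is nonzero modulo every maximal ideal is an isomorphism; evaluating at any closed point of $\mathrm{Spec}(\mathcal{C}[c^{-1}])$ produces a tuple of matrices on which not every central polynomial with zero constant term vanishes, so they generate $M_n$ over the residue field, which supplies the required surjectivity.

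The main obstacle, and the only step requiring care, is the first: one must believe that the Azumaya locus of $F\langle\xi_k\rangle$ is defined precisely by the vanishing of central polynomials with zero constant term, and that on this locus the trace algebra $\T_n$ coincides with $\Z_n$. Both statements are classical consequences of the Formanek--Razmyslov and Artin--Procesi theorems, but together they are what makes the hypothesis ``$c$ has zero constant term'' (as opposed to just ``$c\neq 0$'') both natural and sufficient.
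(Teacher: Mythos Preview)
Your argument is correct but runs in a different order from the paper's and leans more heavily on quoted facts. The paper first observes that $c$, having zero constant term, is a polynomial identity of $M_{n-1}$; hence inverting $c$ kills any quotient satisfying the identities of $M_{n-1}$, and Artin--Procesi applies directly to make both $F\langle\xi_k\rangle[c^{-1}]$ and $\mathcal T_n\langle\xi_k\rangle[c^{-1}]$ Azumaya of rank $n^2$ over their respective centers. Only \emph{after} this does the paper deduce $\Z_n[c^{-1}]=\T_n[c^{-1}]$: an Azumaya algebra is closed under its reduced trace, which here coincides with the matrix trace, so every trace already lies in $F\langle\xi_k\rangle[c^{-1}]$, forcing the two algebras (and their centers) to agree. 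You instead assert up front that the Azumaya locus coincides with the locus where $\T_n=\Z_n$ and cite this as classical; that is true, but it is essentially the content of the paper's reduced-trace step, so your first paragraph is quoting what the paper proves. For the final isomorphism the paper invokes the Azumaya splitting theorem ($R\subset S$, $R$ Azumaya, $Z(R)\subset Z(S)$ $\Rightarrow$ $S\cong R\otimes_{Z(R)}C_S(R)$) with $R=\mathcal T_n\langle\xi_k\rangle[c^{-1}]$ and $S=M_n(\mathcal C[c^{-1}])$, whereas you construct the map by hand and verify it fiberwise. Both routes are valid; the paper's is shorter and more self-contained, while yours makes the role of the Azumaya locus more explicit. (Incidentally, your faithful-flatness remark is unnecessary: any base change of an Azumaya algebra is Azumaya.)
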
 

\begin{proof}
As it is well known and easy to see,
 $c$ is an identity of $M_{n-1}$.
Since $c$ is invertible in $F\langle \xi_k \rangle [c^{-1}]$ and $\mathcal T_n\langle \xi_k\rangle [c^{-1}]$, these two algebras cannot have  nonzero quotients  satisfying the identities of   $M_{n-1}$.
It follows from the Artin-Procesi theorem that $F\langle \xi_k \rangle [c^{-1}]$ and  $\mathcal T_n\langle \xi_k\rangle [c^{-1}]$ are Azumaya algebras over their centers of rank $n^2$. These centers are clearly  $\Z_n[c^{-1}]$ and $\T_n[c^{-1}]$. By general properties, the reduced trace of  $x\in F\langle \xi_k \rangle [c^{-1}]$ is just the trace of $x$ considered as a matrix in  $F\langle \xi_k \rangle [c^{-1}]$, and 
$ F\langle \xi_k \rangle [c^{-1}]$ is closed under the reduced trace. Hence every element in $\mathcal T_n\langle \xi_k\rangle [c^{-1}]$ is contained in $F\langle \xi_k \rangle [c^{-1}]$.  Accordingly,   $F\langle \xi_k \rangle [c^{-1}]=\mathcal T_n\langle \xi_k\rangle [c^{-1}]$ and  $\Z_n[c^{-1}]=\T_n[c^{-1}]$.

Recall a standard fact (see \cite{A-G1} and  \cite{A-G2} or \cite[Theorem 2.8]{Salt}) that if $R\subseteq S$, $R$ is an Azumaya
 algebra and the center $Z(R)$ of $R$ is contained in the center $Z(S)$ of $S$, then
 $S\cong R\otimes_{Z(R)} R'$ where $R'$ is the centralizer of $R$ in $S$. Taking $\mathcal T_n\langle \xi_k\rangle [c^{-1}]$ for $R$ and  $M_n(\C[c^{-1}])$ for $S$ we obtain the last assertion of the theorem. 
\end{proof}
We should remark that this theorem has a geometric content.  Let $F$ be algebraically closed. If we fix the number of generic matrices to a finite number $m$,  we have the action by simultaneous conjugation of $G:=PGL_n(F)$ on the affine space $M_n(F)^m$. By geometric invariant theory  the algebra $\T_n$  is the coordinate ring of the categorical quotient $M_n(F)^m//PGL_n(F)$, a variety parameterizing the closed orbits, which correspond to isomorphism classes of semisimple representations of dimension $n$ of the free algebra in $m$ generators (cf. \cite{A}).

The action of the projective group $G$ is free on the open set of irreducible representations and the complement of this open set is exactly the subvariety of $m$-tuples of matrices where all central polynomials with no constant term vanish.  The Azumaya algebra property reflects this geometry.  Except for the special case $m=n=2$  the variety $M_n(F)^m//PGL_n(F)$ is smooth exactly on this open set and the quotient map  $M_n(F)^m\to M_n(F)^m//PGL_n(F)$ is not flat over the singular set (cf. \cite{L1}). As we shall see these singularities are in some sense measured   by the quasi-identities  of matrices modulo those which are a consequence of the Cayley Hamilton identity, see the exact sequence \eqref{kk}.  This will be described as a module $ \mathfrak{I}_n/(Q_n) $ supported in the singular part of the quotient variety. On the other hand to prove that this module is indeed nontrivial is quite difficult and although we will show this, we only have a partial description of this phenomenon, the description of the {\em antisymmetric part of the module}.  

\subsection{Quasi-identities}  
The purpose of this section is to introduce the setting and record some easy results  on the main theme of this paper, {\em quasi-identities}. Let us point out, first of all, that we will consider our problems exclusively  on the algebra $M_n=M_n(F)$. 
Various problems on functional identities studied in \cite{FIbook} can be solved 
 for quite general classes of rings, but  the study of nonstandard solutions is of a different nature and  confining to  matrices seems  natural in this context.

We will define a quasi-polynomial in a slightly different way than in \cite{BC} and \cite{FIbook}.  Our definition   is not restricted to the multilinear situation, and, on the other hand,  
is adjusted for applications  to the matrix algebra $M_n$. 

We use the notations of \eqref{lalf}  and \eqref{ilC}.
 A {\em quasi-polynomial} is an element of the algebra
   $ \mathcal{C}\bigl\langle X \bigr\rangle :=\mathcal{C}\bigl\langle X \bigr\rangle,$  the free algebra in the variables $X$ with coefficients in the polynomial algebra of functions on $M_n(F)^{|X|}$.

Thus, a quasi-polynomial is a polynomial in the noncommuting indeterminates $x_k$ whose coefficients are ordinary polynomials in the commuting  indeterminates $x_{ij}^{(k)}$, coordinates of the space $M_n(F)^{|X|}$. 
A quasi-polynomial $P$ can be therefore uniquely written as 
$$
P=\sum \lambda_M M,
$$
where $M$ is a noncommutative monomial in the $x_k$'s and $\lambda_M$ is a commutative polynomial in  the $x_{ij}^{(k)}$'s, that is a polynomial function on sequences of matrices. Of course, $P$ depends on finitely many $x_k$'s and finitely many $x_{ij}^{(k)}$'s. We can therefore write 
$$P=P(x_{11}^{(1)}, \ldots,x_{nn}^{(1)},\ldots, x_{11}^{(m)},\ldots,x_{nn}^{(m)}, x_1,\ldots,x_m)$$
for some $m$. 
It is possible to put this setting in the framework of universal algebra, but we shall limit to the following easy facts.

\subsubsection{Substitutional rules} Commutative indeterminates  $x_{ij}^{(k)}$  have a {\em substitutional rule}, that is  given as follows.  
We have a map $\Phi:x_k\mapsto \xi_k$ of  $ \mathcal{C}\bigl\langle X \bigr\rangle $ to $M_n(\mathcal{C})$  which maps $x_k$  to the corresponding generic matrix and is the identity on $\mathcal{C}$, so for each choice of  
$H\in  \mathcal{C}\bigl\langle X \bigr\rangle $ it makes sense to speak of $\Phi(H)_{ij}$, the $(i,j)$ entry of $\Phi(H)$. The substitution in $ \mathcal{C}\bigl\langle X \bigr\rangle $ should be understood as that one substitutes  $x_k\mapsto H_k\in  \mathcal{C}\bigl\langle X \bigr\rangle  $  and simultaneously $x_{ij}^{(k)}\mapsto    \Phi(H_k)_{ij}$. 
We define
\begin{definition}
A {\em T-ideal} of $ \mathcal{C}\bigl\langle X \bigr\rangle $ as an ideal that is closed under all such substitutions.
\end{definition} 
Also, it is convenient to use a more suggestive notation and write $\lambda_M(x_1,\ldots,x_m)$ for $\lambda_M(x_{11}^{(1)}, \ldots,x_{nn}^{(1)},\ldots, x_{11}^{(m)},\ldots,x_{nn}^{(m)})$, and hence
 $P(x_1,\ldots,x_m)$ for $P$.

We now define the {\em evaluation} of $P$ at an $m$-tuple $A_1,\ldots,A_m\in M_n$, $P(A_1,\ldots,A_m)$,  by substituting $A_k$ for $x_k$ and  $a_{ij}^{(k)}$ for $x_{ij}^{(k)}$, where $A_k= (a_{ij}^{(k)})$.\begin{definition}
If $P(A_1,\ldots,A_m)=0$
for all $A_1,\ldots,A_m\in M_n$, then we  say that $P$ is a {\em quasi-identity} of $M_n$. We denote by $\mathfrak{I}_n$ the set of all quasi-identities of $M_n$.
\end{definition} 
The set $\mathfrak{I}_n$ of all quasi-identities of $M_n$ clearly forms a T-ideal of $ \mathcal{C}\bigl\langle X \bigr\rangle $. As for polynomial or trace identities,  $\mathfrak{I}_n$  is the kernel of the $\mathcal C-$linear evaluation map from $ \mathcal{C}\bigl\langle X \bigr\rangle $ to $M_n(\mathcal C)$ mapping $x_k$ to the generic matrix $\xi_k$.

 Let $I$ denote the identity of $M_n(\mathcal C)$.  For convenience we  repeat the proof in 
  \begin{lemma}\label{gen1}
 The algebra $ \mathcal{C}\bigl\langle X \bigr\rangle /\mathfrak{I}_n$ is isomorphic to the subalgebra $\mathcal{C}\langle \xi_k\rangle$ of $M_n(\mathcal{C})$ generated by all generic matrices $\xi_k=(x_{ij}^{(k)})$, $k=1,2,\ldots$, and all $\lambda I$, $\lambda\in \mathcal{C}$.  
 \end{lemma}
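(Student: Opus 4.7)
The plan is to exhibit the claimed isomorphism as coming from the first isomorphism theorem applied to a natural evaluation homomorphism, so the real content is identifying its kernel with $\mathfrak{I}_n$.

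First I would define a $\mathcal{C}$-algebra homomorphism
$$\Phi : \mathcal{C}\bigl\langle X \bigr\rangle \longrightarrow M_n(\mathcal{C})$$
by sending each generator $x_k$ to the generic matrix $\xi_k=(x_{ij}^{(k)})$ and each $\lambda\in\mathcal{C}$ to $\lambda I$. This is well-defined because $\mathcal{C}\bigl\langle X \bigr\rangle$ is free as a $\mathcal{C}$-algebra on the set $X$, and it exists on the level of the excerpt as already noted before the substitutional rules. By construction the image of $\Phi$ is exactly the subalgebra $\mathcal{C}\langle \xi_k\rangle$ of $M_n(\mathcal{C})$ generated by the generic matrices and the scalar matrices $\lambda I$. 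So by the first isomorphism theorem it suffices to verify that $\ker\Phi = \mathfrak{I}_n$.

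Next I would unpack what $\Phi$ does on a general quasi-polynomial $P=\sum_M \lambda_M M$, where $M$ runs over noncommutative monomials in the $x_k$ and $\lambda_M\in\mathcal{C}$. Then $\Phi(P)=\sum_M \lambda_M\,\Phi(M)\in M_n(\mathcal{C})$, and each entry $\Phi(P)_{ij}$ is a polynomial in the commuting indeterminates $x_{ij}^{(k)}$. Evaluating $P$ at an $m$-tuple $(A_1,\ldots,A_m)\in M_n^m$ (with $A_k=(a_{ij}^{(k)})$) is by definition the same as specialising each $x_{ij}^{(k)}$ to $a_{ij}^{(k)}$ inside every coefficient $\lambda_M$ and every entry of $\Phi(M)$. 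Hence
$$P(A_1,\ldots,A_m) = \Phi(P)\bigl|_{x_{ij}^{(k)}\mapsto a_{ij}^{(k)}}.$$

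From this the equivalence $\Phi(P)=0 \iff P\in\mathfrak{I}_n$ is immediate once we use the standing hypothesis $\mathrm{char}(F)=0$, so that $F$ is infinite: a polynomial in $\mathcal{C}$ vanishes as a function on $M_n(F)^{m}\cong F^{mn^{2}}$ if and only if it is the zero polynomial. Applying this entry-by-entry to the matrix $\Phi(P)\in M_n(\mathcal{C})$, we get that $\Phi(P)=0$ in $M_n(\mathcal{C})$ iff $P(A_1,\ldots,A_m)=0$ for every choice of $A_k\in M_n(F)$, i.e.\ iff $P\in\mathfrak{I}_n$. This gives $\ker\Phi=\mathfrak{I}_n$ and therefore $\mathcal{C}\bigl\langle X \bigr\rangle/\mathfrak{I}_n\cong\mathcal{C}\langle\xi_k\rangle$.

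There is no real obstacle here; the lemma is essentially a bookkeeping statement. The only point that needs care is the matching of the two notions of ``evaluation'' — the formal substitution $x_k\mapsto A_k$, $x_{ij}^{(k)}\mapsto a_{ij}^{(k)}$ used to define a quasi-identity, versus specialisation of the entries of the generic-matrix expression $\Phi(P)$ — and the appeal to infiniteness of $F$ to pass between polynomials and polynomial functions. Both are standard and parallel to the proof recalled earlier for ordinary polynomial identities via the algebra of generic matrices.
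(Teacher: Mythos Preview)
Your proof is correct and follows essentially the same approach as the paper: define the evaluation homomorphism $\Phi$ sending $x_k\mapsto\xi_k$ and $\lambda\mapsto\lambda I$, identify its image as $\mathcal{C}\langle\xi_k\rangle$, and use the infiniteness of $F$ (from $\mathrm{char}(F)=0$) to show $\ker\Phi=\mathfrak{I}_n$. You have simply spelled out the ``standard argument'' that the paper invokes in one line.
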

  \begin{proof}
 Let $\Phi:  \mathcal{C}\bigl\langle X \bigr\rangle \to M_n(\mathcal{C})$ be the homomorphism determined by $\Phi(x_k) = (x_{ij}^{(k)})$ and $\Phi(\lambda)=\lambda I$ for $\lambda\in \mathcal{C}$.
It is immediate that  $ \ker \Phi\subseteq \mathfrak{I}_n$. Given $P=P(x_1,\ldots,x_m)\in \mathfrak{I}_n$ we have $P(A_1,\ldots,A_m)=0$ for all $A_i\in M_n$. Since char$(F)=0$, and hence $F$ is infinite, a standard argument shows that  $\Phi(P)=0$. Thus,  $ \ker \Phi= \mathfrak{I}_n$, and the result follows.
 \end{proof}

In fact the evaluation $\rho$ of the free algebra with trace to generic matrices with traces factors through  $ \mathcal{C}\bigl\langle X \bigr\rangle $
$$\rho:\mathfrak T\langle X \rangle\stackrel{\pi}  \longrightarrow  \mathcal{C}\bigl\langle X \bigr\rangle \to M_n(\mathcal C)$$ by evaluating the trace monomials $\tr(x_{i_1}\cdots x_{i_m})$  into $M_n(\mathcal C)$
 but keeping fixed the free variables.   The image of $\pi$ is the algebra   $\T_n\bigl\langle X \bigr\rangle$ of invariants  of the algebra  $\mathcal{C}\bigl\langle X \bigr\rangle$ with respect to the action of the projective group on the coefficients $\mathcal C$ and fixing the variables $X$.\smallskip
 
 Thus the image through $\pi$ of a trace polynomial can also be viewed as a quasi-polynomial $\sum \lambda_M M$, but such that every $\lambda_M$ is an invariant and thus can be expressed as a linear combination of the products of $\tr(x_{i_1}\cdots x_{i_m})$.

Every trace identity gives rise  to a quasi-identity of $M_n$, but a nontrivial element of  $\mathfrak T\langle X \rangle$  may very well map to 0 under $\pi$, so  a nontrivial trace identity may  correspond to a trivial quasi-identity. In view of the SFT for matrices we may again consider    the quasi-polynomial arising from the Cayley-Hamilton theorem, therefore it is natural to look in this context for a possible analogue of the SFT  for quasi-identities (cf. Theorem \ref{FST}).

 \begin{definition}
We shall say that a quasi-identity $P$ of $M_n$ is a {\em consequence  of the Cayley-Hamilton identity} if $P$ lies in  
 the T-ideal of $ \mathcal{C}\bigl\langle X \bigr\rangle $ generated by $Q_n$.
\end{definition} The question pointed out in the introduction thus asks  the following:
\vspace{0.13cm}

{\bf Main question.} {\em Is the T-ideal $\mathfrak{I}_n$  generated by $Q_n$?}

(Here we may replace $Q_n$ by $q_n$, as $q_n$ and $Q_n$ generate the same T-ideal.) 
\vspace{0.13cm} 

As we have already remarked, the ideal of quasi-identities $\mathfrak{I}_n$ is the kernel of the evaluation map $\Phi$ of  $ \mathcal{C}\bigl\langle X \bigr\rangle $ into $M_n(\mathcal C)$ mapping the variables to the generic matrices. 
In view of Lemma \ref{gen1} we  have a sequence of inclusion maps 
$$F\langle \xi_k \rangle \subset \mathcal T_n\langle \xi_k\rangle \subset  \mathcal C\langle\xi_k\rangle.$$    Our first remark is that, unlike  $F\langle \xi_k \rangle \cong F\langle  X \rangle/{\rm id}(M_n)$ and $\mathcal T_n\langle \xi_k\rangle $, $\mathcal C\langle\xi_k\rangle\cong \mathcal{C}\bigl\langle X \bigr\rangle /\mathfrak{I}_n $ is not a domain. This can be deduced  from Lemma \ref{gen2} below, but let us, nevertheless, give a simple concrete example.
 
 \begin{example} 
Note that none of 
 $$P_1 = x_{12}^{(2)}x_1   -x_{12}^{(1)}x_2 + x_{12}^{(1)}x_{22}^{(2)} - x_{22}^{(1)}x_{12}^{(2)}$$
 and
 $$P_2 = x_{12}^{(2)}x_1   -x_{12}^{(1)}x_2 + x_{12}^{(1)}x_{11}^{(2)} - x_{11}^{(1)}x_{12}^{(2)}$$
lies in $\mathfrak{I}_2$, but $P_1P_2$ does.
 \end{example}
 
 The center of $ \mathcal{C}\bigl\langle X \bigr\rangle /\mathfrak{I}_n$ is isomorphic to $\mathcal{C}$, which is a domain. We may therefore form the algebra of central quotients of $ \mathcal{C}\bigl\langle X \bigr\rangle /\mathfrak{I}_n$, which consists of elements of the form $\alpha R$ where $R\in  \mathcal{C}\bigl\langle X \bigr\rangle /\mathfrak{I}_n$ and $\alpha$ lies in
  $$\mathcal{K}:=  F\bigl(x_{ij}^{(k)}\,|\, 1\le i,j\le n, k=1,2,\ldots\bigr),$$
 the field of rational functions in $x_{ij}^{(k)}$  (cf. \cite[p. 54]{Row}). In order to describe this $\mathcal K$-algebra, we invoke the Capelli polynomials
 $$
 C_{2k-1}(x_1,\ldots,x_k,y_1,\ldots,y_{k-1}) := \sum_{\sigma\in S_k} \epsilon_\sigma x_{\sigma(1)}y_1 x_{\sigma(2)} y_2\cdots  x_{\sigma(k-1)}y_{k-1} x_{\sigma(k)},
 $$
  where $\epsilon_\sigma$ is the sign of the permutation $\sigma$.
 As it is well-known, $C_{2n^2-1}$ is a polynomial identity of every proper subalgebra of $M_n(E)$ but not of $M_n(E)$ itself, for every field $E$ \cite[Theorem 1.4.8]{Row}. 
 
 \begin{lemma}\label{gen2}
 The algebra of central quotients of $ \mathcal{C}\bigl\langle X \bigr\rangle /\mathfrak{I}_n$ is isomorphic to $M_n(\mathcal{K})$.
 \end{lemma}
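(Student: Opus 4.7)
My plan is to identify, via Lemma~\ref{gen1}, the algebra $\mathcal{C}\langle X\rangle/\mathfrak{I}_n$ with the subalgebra $\mathcal{C}\langle\xi_k\rangle\subseteq M_n(\mathcal{C})$. Since $\mathcal{C}\langle\xi_k\rangle$ contains all scalar matrices $\lambda I$, and the center of $M_n(\mathcal{C})$ is $\mathcal{C}\cdot I$, the center of $\mathcal{C}\langle\xi_k\rangle$ is exactly $\mathcal{C}$. The algebra of central quotients is therefore the localization $\mathcal{A}:=(\mathcal{C}\setminus\{0\})^{-1}\mathcal{C}\langle\xi_k\rangle$. Because $\mathcal{C}\langle\xi_k\rangle$ is torsion-free over the domain $\mathcal{C}$ (as a $\mathcal{C}$-submodule of the free module $M_n(\mathcal{C})$), this localization embeds naturally into $\mathcal{K}\otimes_\mathcal{C} M_n(\mathcal{C}) = M_n(\mathcal{K})$ as a $\mathcal{K}$-subalgebra. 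Thus the statement reduces to proving that $\mathcal{A} = M_n(\mathcal{K})$.

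For this I would argue by contradiction using the Capelli polynomial. Suppose $\mathcal{A}$ is a proper $\mathcal{K}$-subalgebra of $M_n(\mathcal{K})$. By \cite[Theorem 1.4.8]{Row} applied to the field $E=\mathcal{K}$, the Capelli polynomial $C_{2n^2-1}$ would then be an identity of $\mathcal{A}$. Since $\mathcal{C}\langle\xi_k\rangle\subseteq \mathcal{A}$, this forces $C_{2n^2-1}$ to vanish on every tuple from $\mathcal{C}\langle\xi_k\rangle$. In particular, evaluating at the generic matrices $\xi_1,\ldots,\xi_{2n^2-1}\in F\langle \xi_k\rangle\subseteq \mathcal{C}\langle\xi_k\rangle$ would yield $C_{2n^2-1}(\xi_1,\ldots,\xi_{2n^2-1}) = 0$ in $M_n(\mathcal{C})$, which is precisely the statement that $C_{2n^2-1}$ is a polynomial identity of $M_n(F)$, contradicting the same theorem applied with $E = F$. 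Hence $\mathcal{A} = M_n(\mathcal{K})$, as required.

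The only step demanding real care is the embedding in the first paragraph: one must check that inverting the nonzero central elements of $\mathcal{C}\langle\xi_k\rangle$ neither collapses the algebra nor produces anything outside $M_n(\mathcal{K})$. Both points follow from $\mathcal{C}$ being a domain together with the fact that $M_n(\mathcal{C})$ is a finitely generated free $\mathcal{C}$-module, so that localization at $\mathcal{C}\setminus\{0\}$ commutes with taking $\mathcal{C}$-submodules. Once that embedding is in place, the Capelli-polynomial dichotomy in the second paragraph immediately yields the equality $\mathcal{A} = M_n(\mathcal{K})$.
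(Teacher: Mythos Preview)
Your argument is correct and follows essentially the same route as the paper: both identify the central quotient with a $\mathcal{K}$-subalgebra of $M_n(\mathcal{K})$ and then use the Capelli polynomial dichotomy from \cite[Theorem~1.4.8]{Row} to conclude that this subalgebra is all of $M_n(\mathcal{K})$. The only difference is that you spell out the localization and embedding step more carefully, whereas the paper compresses it into the phrase ``Now we can apply Lemma~\ref{gen1}.''
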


\begin{proof} Since $C_{2n^2-1}$ is not a polynomial identity of $M_n(F)$, it is also not a  polynomial identity of  the $\mathcal K$-subalgebra of $M_n(\mathcal{K})$ generated by all generic matrices $(x_{ij}^{(k)})$, $k=1,2,\ldots$. But then this subalgebra is the whole algebra $M_n(\mathcal{K})$.
Now we can apply Lemma \ref{gen1}.
\end{proof}

We conclude this section with a small application. Define the image of $P=P(x_1,\ldots,x_m)\in \mathcal{C}\bigl\langle X \bigr\rangle $ as 
$$\im(P) = \{P(A_1,\ldots,A_m)\,|\,A_1,\ldots,A_m\in M_n\}.$$
It is an open question which subsets of $M_n$ can be images of noncommutative polynomials; cf. \cite{BMR, Sp}. Since, on the other hand, such an image is closed under conjugation  it follows that among linear subspaces of $M_n$ there are only four possibilities: $\{0\}$,  the space of all scalar matrices, the space of all trace zero matrices, and $M_n$. The situation with quasi-polynomials is strikingly different.

\begin{proposition} For every linear subspace $V$ of $M_n$
there exists $P\in \QQ$ such that $\im(P)=V$.
\end{proposition}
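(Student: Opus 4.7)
The strategy is to build, for each basis element of $V$, a quasi-polynomial whose image is the line spanned by that element, and then take a sum using pairwise disjoint sets of variables. The case $V=\{0\}$ is handled by $P=0$, so fix a basis $B_1,\ldots,B_d$ of $V$ and reduce to the following assertion: for every $B\in M_n$ there is a quasi-polynomial $P_B$ with $\im(P_B)=FB$. Granting this, rename the variables used in different $P_{B_k}$'s so that they are pairwise disjoint; then $P:=\sum_{k=1}^{d}P_{B_k}$ satisfies $\im(P)=\sum_{k=1}^{d}FB_k=V$.

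To construct $P_B$, invoke Lemma~\ref{gen2}: the algebra of central quotients of $\mathcal{C}\langle X\rangle/\mathfrak{I}_n$ is $M_n(\mathcal{K})$, so each matrix unit $E_{ij}$ can be written as $E_{ij}=\Phi(R_{ij})/\mu_{ij}$ for some $R_{ij}\in\mathcal{C}\langle X\rangle$ and some nonzero $\mu_{ij}\in\mathcal{C}$. Set $\mu:=\prod_{i,j}\mu_{ij}\in\mathcal{C}\setminus\{0\}$, and for $B=\sum_{i,j}b_{ij}E_{ij}$ put
$$\widetilde{P}_B:=\sum_{i,j}b_{ij}\Bigl(\prod_{(p,q)\neq(i,j)}\mu_{pq}\Bigr)R_{ij}\in\mathcal{C}\langle X\rangle.$$
A direct check gives $\Phi(\widetilde{P}_B)=\mu B$ in $M_n(\mathcal{C})$, so $\widetilde{P}_B$ evaluated at any tuple of matrices returns a scalar multiple of $B$; in particular $\im(\widetilde{P}_B)\subseteq FB$.

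To cover the full line $FB$, introduce an additional matrix variable $x_m$ (adjoined to those already used by $\widetilde{P}_B$) and set $P_B:=x_{11}^{(m)}\widetilde{P}_B$, where $x_{11}^{(m)}\in\mathcal{C}$ is its $(1,1)$-entry. Evaluating at $(A_1,\ldots,A_{m-1},A_m)$ gives $(A_m)_{11}\,\mu(A_1,\ldots,A_{m-1})\,B$; since $\mu\neq 0$ as a polynomial there is $A^\star$ with $\mu(A^\star)\neq 0$, and choosing $(A_m)_{11}$ appropriately then recovers every element of $FB$, so $\im(P_B)=FB$. The main obstacle to be circumvented is that a nonzero polynomial function need not be surjective onto $F$ when $F$ is not algebraically closed, which is precisely why the scalar $x_{11}^{(m)}$ is inserted; the rest is routine bookkeeping given Lemma~\ref{gen2}.
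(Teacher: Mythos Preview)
Your proof is correct and follows essentially the same approach as the paper's: reduce to the one-dimensional case by summing quasi-polynomials in disjoint variables, use Lemma~\ref{gen2} to produce a quasi-polynomial whose image lies in $FB$, and then multiply by a fresh coordinate $x_{ij}^{(k)}$ to sweep out the whole line. The only cosmetic difference is that the paper directly writes $x_{11}^{(1)}A\in M_n(\mathcal K)$ as $\lambda^{-1}P_0$, whereas you assemble $\widetilde{P}_B$ from the matrix units; the underlying idea is identical.
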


\begin{proof}
By taking the sums of quasi-polynomials in distinct indeterminates we see that it is enough to prove the theorem for the case where $V$ is one-dimensional, $V = FA$ for some $A\in M_n$. 
According to Lemma \ref{gen2}, we may identify $x_{11}^{(1)}A\in M_n(\mathcal K)$ with $\lambda^{-1}P_0$ where $0\ne\lambda\in \mathcal C$ and $P_0\in\QQ$.
Hence $\im(P_0)\subseteq FA$. Picking an indeterminate $x_{ij}^{(k)}$ of which $P_0$ is independent we thus see that $P= x_{ij}^{(k)}P_0$
satisfies $\im(P)=FA$.
\end{proof}
\section{Quasi-identities and the Cayley-Hamilton identity  \label{sec2}}

\subsection{Trace algebras and the Cayley-Hamilton identity}

We begin by reformulating our problem in the commutative algebra framework by using the result from \cite{Proc2}. Let us, therefore, recall the 
content of that paper.  We have already seen in \S \ref{TR} the notion of an algebra with trace, in particular for any commutative algebra $ A$ we consider $M_n( A)$ with the usual trace.\smallskip

For an algebra with trace $R$  and a number $n\in \mathbb N$, we define the  universal map into $n\times n$ matrices  as a  pair of a commutative algebra $\A$  and a morphism (of algebras with trace) $j: R\to M_n(\A)$  with the following universal property: for any other map (of algebras with trace) $f: R\to M_n(\B)$  with $\B$ commutative there is a unique map $\bar f:\A\to \B$ of commutative algebras making the diagram commutative
$$\xymatrix{ &R\ar@{->}[r]^{j}\ar@{->}[rd]^{f} &M_n(\A)\ar@{->}[d]^{M_n(\bar f)}\\&  & M_n(\B)  } $$ 
 The existence of such a universal map is  easily established, although in general it may be 0.

The main idea comes from   category theory, that is, from  representable functors.   We take the functor  from commutative algebras to sets  which associates to a commutative algebra $\B$
 the set of (trace preserving) morphisms  $\hom(R, M_n(\B))$ and want to prove that it is representable, i.e., that there is a commutative algebra $\A$ and a natural isomorphism $\hom(R, M_n(\B))\cong  \hom(\A,  \B) $.  
Then the identity map $1_{\A}\in \hom(\A, \A)$ corresponds to the universal map  $j\in \hom(R,M_n(\A))$. 

We have seen the category of algebras with trace has   free algebras which are the usual free algebras in  indeterminates $x_k$ to which we add a commutative algebra $\mathfrak T$ of {\em formal traces}. Then we see that the commutative algebra associated to a free algebra is the  polynomial algebra in indeterminates $x_{ij}^{(k)}$. The universal map maps $x_k$ to the generic matrix with entries $x_{ij}^{(k)}$  and   the formal traces   $\tr(x_{i_1}x_{i_2}\ldots x_{i_k})$ map to the traces of the corresponding monomials in the generic matrices. From a presentation of $R$ as a quotient of a free algebra one obtains a presentation of $\A$ as a quotient of the ring of polynomials in the $x_{ij}^{(k)}$.

If we consider now algebras over a field  $F$ (which is of characteristic 0 by the above convention)   we have that the group $G=GL_n(F)$  of invertible $n\times n$ matrices (in fact the projective group  $PGL_n(F)=GL_n(F)/F^*\,)$ acts on the algebra $\A$ and it  also acts by conjugation on $M_n(F)$,  so it acts diagonally on $M_n(\A)$. The main theorem of \cite{Proc2} says that 
\begin{theorem}\label{CHA}
The image of $j$ is the invariant algebra $M_n(\A)^G$ and the kernel of $j$ is the trace-ideal generated  by  the evaluations of the formal Cayley-Hamilton expression for the given $n$. In particular, if  $R$  satisfies
the $n$-th 
Cayley-Hamilton
identity, then $j$ is injective.
\end{theorem}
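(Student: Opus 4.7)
The plan is to reduce to the case of the free algebra with trace, where the statement decomposes into the First and Second Fundamental Theorems for matrices (Theorem \ref{FST}), and then to transfer the conclusion to an arbitrary quotient using reductivity of $G = PGL_n(F)$ in characteristic zero.

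First I would present $R$ as $F/J$, where $F = \mathfrak{T}\langle X \rangle$ is the free algebra with trace on a set of variables and $J$ is a trace-ideal. A direct chase through the universal property shows that $\A = \mathcal{C}/I_J$, where $I_J \subseteq \mathcal{C}$ is the ideal generated by the $n^2$ matrix entries of $j_F(f)$ as $f$ runs over $J$; because $J$ is closed under substitution, $j_F(J)$ is $G$-stable, so $I_J$ is $G$-stable and the action of $G$ descends to $\A$. For the image, FFT identifies $\im(j_F)$ with $M_n(\mathcal{C})^G$, and since $G$ is reductive and $\mathrm{char}(F)=0$, the invariant functor $(-)^G$ is exact, so the surjection $M_n(\mathcal{C}) \twoheadrightarrow M_n(\A)$ restricts to a surjection on $G$-invariants. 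Composition then yields $\im(j) = M_n(\A)^G$.

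For the kernel, let $T_R \subseteq R$ denote the trace-ideal generated by Cayley-Hamilton evaluations in $R$, which is the image of the analogous trace-ideal $T_F \subseteq F$; by SFT we have $\ker(j_F) = T_F$. The inclusion $T_R \subseteq \ker(j)$ is clear because $M_n(\A)$ satisfies CH. The reverse inclusion amounts to proving that every $r \in F$ with $j_F(r) \in M_n(I_J)$ already lies in $J + T_F$. Working modulo $T_F$ the map $j_F$ becomes the embedding $F/T_F \cong \T_n\langle \xi_k\rangle \hookrightarrow M_n(\mathcal{C})$, so the claim reduces to identifying the intersection $\T_n\langle \xi_k\rangle \cap M_n(I_J)$ with the image of $J$ in $\T_n\langle \xi_k\rangle$.

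The main obstacle is precisely this last identification. One resolves it by combining the $G$-isotypic decomposition of $M_n(\mathcal{C})$ with the observation that $I_J$ is generated by $G$-equivariant data extracted from $J$, so that taking $G$-invariants of $M_n(I_J)$ contributes nothing beyond the image of $J$; without reductivity of $G$ this step would fail, which is why the characteristic zero assumption is indispensable. Once this is in place, the ``in particular'' clause is immediate: if $R$ satisfies CH then $T_R = 0$, hence $\ker(j) = 0$ and $j$ is injective.
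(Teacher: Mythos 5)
Your reduction to the free case and the overall architecture---present $R = F/J$ with $F = \mathfrak{T}\langle X\rangle$, identify $\mathcal{A}_R = \mathcal{C}/I_J$, use FFT/SFT for the free algebra, and transfer via reductivity---is indeed the strategy of \cite{Proc2} (the paper under review only cites this theorem, it does not prove it). The image statement you handle correctly: exactness of $(-)^G$ gives surjectivity of $M_n(\mathcal{C})^G \twoheadrightarrow M_n(\mathcal{A}_R)^G$, and composing with the FFT surjection $F \twoheadrightarrow M_n(\mathcal{C})^G$ yields $\im(j) = M_n(\mathcal{A}_R)^G$.

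The gap is in the kernel statement, exactly where you flag ``the main obstacle.'' You must show $M_n(I_J)^G \subseteq j_F(J)$, but the sentence ``taking $G$-invariants of $M_n(I_J)$ contributes nothing beyond the image of $J$'' is a restatement of the claim, not a proof; invoking the isotypic decomposition by itself does not close it. Here is what is actually needed. Since $M_n(I_J)$ is the two-sided ideal of $M_n(\mathcal{C})$ generated by the $G$-fixed set $j_F(J)$, the Reynolds operator and $G$-equivariance of multiplication give
$M_n(I_J)^G = \sum_{f\in J} \phi_f\bigl((M_n(\mathcal{C})\otimes_{\mathcal{C}} M_n(\mathcal{C}))^G\bigr)$ with $\phi_f(a\otimes b) = a\,j_F(f)\,b$.
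The FFT for tensor-valued invariants then describes $(M_n(\mathcal{C})\otimes_{\mathcal{C}} M_n(\mathcal{C}))^G$: it is spanned over $\mathcal{T}_n$ not only by $a\otimes b$ with $a,b\in M_n(\mathcal{C})^G$---whose images under $\phi_f$ lie in $M_n(\mathcal{C})^G\, j_F(J)\, M_n(\mathcal{C})^G \subseteq j_F(J)$ because $J$ is a two-sided ideal---but also by contraction elements of the form $\sum_{u,v} a\, e_{uv} \otimes e_{vu}\, b$, whose images under $\phi_f$ produce factors like $\tr\bigl(\cdots j_F(f)\cdots\bigr)$. These land in $j_F(J)$ only because $J$ is closed under the formal trace; if $J$ were merely a two-sided ideal closed under substitution the claim would be false. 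Your argument never invokes the trace-closedness of $J$ at this step, and that is precisely the input missing to make the invariant-theoretic identification go through.
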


\subsubsection{The trace on $\mathcal C\langle X\rangle$}  Now we will apply this theory to $\mathcal C\langle X\rangle$. For this we need to make it into an algebra with trace. For reasons that will soon become clear,  let us write $\mathcal C_x$ for $\mathcal C$ and hence $\mathcal C_x\langle X\rangle$ until the end of this section.\begin{definition}
We define the trace $\tr:\mathcal C_x\langle X\rangle\to \mathcal C_x $ as  the $\mathcal C_x$-linear map satisfying $\tr(1)=n$ and mapping
 a monomial in the indeterminates $x_k$ into  the trace of the corresponding monomial in generic matrices $\xi_k$   in the indeterminates $x_{ij}^{(k)}$.
\end{definition}
In order to understand  what is the universal map of this algebra with trace into $n\times n$ matrices  we  introduce a second polynomial algebra  $\mathcal{C}_y =  F\bigl[y_{ij}^{(k)}\,|\, 1\le i,j\le n, k=1,2,\ldots\bigr]$.

The group $G$  acts on  $\mathcal C_x$   and  $\mathcal C_y$, and, by the FFT,    the invariants  are in both cases the invariants of matrices, that is the algebra generated by the traces of  monomials.  We identify the two algebras of invariants and call this algebra $\T_n$. 

Now  we  set $$\cA_n:=  \mathcal C_x\otimes_{\T_n} \mathcal C_y,$$  and let $\xi_k:=(y_{ij}^{(k)})$ denote the generic matrix in $M_n(\mathcal C_y)$.   Note that the algebra $\T_n\langle\ksi_k,k =1,2,\ldots \rangle$  may be identified to the algebra $\mathcal T_n\langle \xi_k\rangle $ of equivariant maps studied in Theorem  \ref{FST} and which has $\mathcal T_n$ as the center.

From now on let $j:\mathcal C\langle X\rangle\to M_n(\cA_n)$  denote the $\mathcal C_x$-linear map  which maps $x_k$ to the generic matrix $\xi_k=(y_{ij}^{(k)})$, and let $(Q_n)$ denote the T-ideal of $ \mathcal{C}\bigl\langle X \bigr\rangle $ generated by $Q_n$.  Since we are thinking of $\mathcal C_x$ as a coefficient ring, in the next proposition the action of $G$ on $\cA_n =  \mathcal C_x\otimes_{\T_n} \mathcal C_y,$ is by acting on the second factor $\mathcal C_y$. The action on $M_n(\cA_n)= M_n(F)\otimes_F\cA_n$ is  the tensor product action.

\begin{proposition}\label{ls} \begin{enumerate}
\item The map $j:\mathcal C_x\langle X\rangle\to M_n(\cA_n)=M_n(\mathcal C_x\otimes_{\T_n} \mathcal C_y)$ is the universal map into matrices.

\item The algebra $\mathcal C\langle X\rangle/(Q_n)$ is isomorphic to the algebra $M_n(\cA_n)^G=  \C_x\otimes_{\T_n}\mathcal T_n\langle \xi_k\rangle $.

\end{enumerate}\end{proposition}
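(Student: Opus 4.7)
The plan is to deduce both parts from Theorem \ref{CHA} applied to $\mathcal{C}_x\langle X\rangle$ viewed as an algebra with trace. Part (1) amounts to a direct verification that $\cA_n$ has the universal property required by Theorem \ref{CHA} for $R = \mathcal{C}_x\langle X\rangle$; part (2) then follows once the trace-ideal produced by Theorem \ref{CHA} is matched with the T-ideal $(Q_n)$ and the invariants are computed.

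For (1), I would first check that $j$ is a trace-preserving $\mathcal{C}_x$-algebra morphism: by definition, $\tr$ on $\mathcal{C}_x\langle X\rangle$ sends a monomial in the $x_k$'s to the ordinary matrix trace of the corresponding product of generic matrices in $M_n(\mathcal{C}_x)$, and the $\T_n$-identification underlying $\cA_n = \mathcal{C}_x \otimes_{\T_n} \mathcal{C}_y$ makes this agree with $\tr \circ j$ on generators. Given any trace-preserving morphism $f: \mathcal{C}_x\langle X\rangle \to M_n(B)$ with $B$ commutative, $f$ restricts on the central subalgebra $\mathcal{C}_x$ to a map $\phi_x: \mathcal{C}_x \to Z(M_n(B)) = B$, and the entries of $f(x_k) \in M_n(B)$ define a map $\phi_y: \mathcal{C}_y \to B$ by $y_{ij}^{(k)} \mapsto f(x_k)_{ij}$. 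Trace-compatibility of $f$ forces $\phi_x$ and $\phi_y$ to agree on $\T_n$, because for each monomial $M$ the element $\tr(M) \in \mathcal{C}_x$ is expressed as the same polynomial in the $x_{ij}^{(k)}$ that $\tr(j(M)) \in \mathcal{C}_y$ is in the $y_{ij}^{(k)}$, and both must map to $\tr(f(M))$ in $B$. Hence $\phi_x$ and $\phi_y$ glue to a unique $\bar f: \cA_n \to B$ with $M_n(\bar f) \circ j = f$, establishing the universal property.

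For (2), Theorem \ref{CHA} identifies $\ker j$ with the trace-ideal of $\mathcal{C}_x\langle X\rangle$ generated by all evaluations $Q_n(a_1,\ldots,a_n)$, and $\im j$ with $M_n(\cA_n)^G$. The T-substitution rule in $\mathcal{C}_x\langle X\rangle$ simultaneously replaces $x_k \mapsto H_k$ and $x_{ij}^{(k)} \mapsto \Phi(H_k)_{ij}$, which is exactly the algebra-with-trace evaluation of $Q_n$ at $(H_1,\ldots,H_n)$; so the T-ideal $(Q_n)$ and the trace-ideal coincide on two-sided-ideal generators. The only potential gap is closure under trace, but this is automatic here: by Cayley-Hamilton, $\Phi(Q_n) = 0$ in $M_n(\mathcal{C}_x)$, so $(Q_n) \subseteq \ker \Phi = \mathfrak{I}_n$, and therefore $\tr(R) = \tr(\Phi(R)) = 0$ for every $R \in (Q_n)$. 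Thus $(Q_n)$ is already a trace-ideal containing $Q_n$, and by minimality coincides with the trace-ideal generated by $Q_n$. This yields $\mathcal{C}_x\langle X\rangle/(Q_n) \cong M_n(\cA_n)^G$.

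To finish, I would write $M_n(\cA_n) = \mathcal{C}_x \otimes_{\T_n} M_n(\mathcal{C}_y)$, on which $G$ acts trivially on the first factor and diagonally (conjugation paired with the standard action) on the second. Since $G = PGL_n$ is reductive in characteristic $0$, an isotypic decomposition over $\T_n$ shows that taking $G$-invariants commutes with $\mathcal{C}_x \otimes_{\T_n} (-)$, since a trivial $G$-module tensored with a nontrivial isotypic component has no $G$-invariants; hence $M_n(\cA_n)^G = \mathcal{C}_x \otimes_{\T_n} M_n(\mathcal{C}_y)^G = \mathcal{C}_x \otimes_{\T_n} \T_n\langle \xi_k \rangle$, the last equality by FFT (Theorem \ref{FST}). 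The main obstacle, in my view, is the bookkeeping in part (2): distinguishing the T-ideal, the trace-ideal, and $\ker \Phi$, and recognizing that the inclusion $(Q_n) \subseteq \mathfrak{I}_n$ makes trace-closure of $(Q_n)$ automatic. Once this is noted, the proposition reduces to Theorem \ref{CHA} plus the standard invariant-theoretic computation above.
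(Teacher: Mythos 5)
Your proof is correct and follows essentially the same route as the paper's: establish the universal property of $j$ directly by decomposing a trace-preserving morphism $f:\mathcal C_x\langle X\rangle\to M_n(B)$ into a pair of maps $\phi_x:\mathcal C_x\to B$ and $\phi_y:\mathcal C_y\to B$ agreeing on $\T_n$, then invoke Theorem~\ref{CHA} and the FFT together with reductivity of $G$ to get part~(2). The one place you go beyond the paper's write-up is worth noting: the paper simply asserts that Theorem~\ref{CHA} yields $\ker j=(Q_n)$, whereas you explicitly reconcile the trace-ideal of Theorem~\ref{CHA} with the T-ideal $(Q_n)$ as defined via the substitution rule, observing that trace-closure is automatic since $(Q_n)\subseteq\ker\Phi$ forces $\tr(R)=\tr(\Phi(R))=0$ for $R\in(Q_n)$; this is a genuine bookkeeping point that the paper leaves implicit, and your treatment of it is correct.
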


\begin{proof}
By Theorem \ref{CHA},  (2) follows from (1) so it is enough to prove   that $j$ is the universal map. 

Take an algebra with trace $\B$. Let
us compute the representable functor $\hom(\mathcal C\langle X\rangle,M_n(\B))$. In order to give  a homomorphism  $\phi:\mathcal C\langle X\rangle\to M_n(\B) $ in the category of algebras with trace,  we have to choose arbitrary elements $a_{ij}^{(k)}\in \B$ for the images of the elements $x_{ij}^{(k)}$, and matrices $B_k=(b_{ij}^{(k)})$  for the images of the elements $x_k$. 

Moreover, if we consider the matrices $A_k:=(a_{ij}^{(k)})$ we need to  impose that the trace of each monomial formed by the $A_k$ equals the trace of the corresponding monomial formed by the $B_k$.

Now to give the $a_{ij}^{(k)}$ is the same as to give a homomorphism of  $\mathcal C_x$ to $\B$, and to give the $b_{ij}^{(k)}$ is the same as to give a homomorphism of  $\mathcal C_y$ to $\B$. The compatibility means that the restrictions of these two homomorphisms to the algebra  $\T_n$, which is  contained naturally in both copies, coincide. This is exactly  the description of a homomorphism of  $\mathcal C_x\otimes_{\T_n} \mathcal C_y$ to $\B$. Thus, $j$ is indeed the universal map.

Next observe that the action of $G$  is only on the factor  $ \mathcal C_y$. By Theorem \ref{CHA}  it follows that the kernel of $j$ is  equal to $(Q_n)$. Thus, 
 it remains to find  $M_n(\cA_n)^G$, the image of $j$.
Note that $M_n(\cA_n)= \mathcal C_x\otimes_{\T_n}M_n( \mathcal C_y)$ and that $G$ acts trivially on $\mathcal C_x$  while on $M_n( \mathcal C_y)$  it is the action used in the universal map of the free  algebra with trace (see \cite{Proc2} for details). By a standard argument on reductive groups we have
$M_n(\cA_n)^G=\mathcal C_x\otimes_{\T_n}M_n( \mathcal C_y)^G$, which is by the FFT equal to  $ \C_x\otimes_{\T_n} \mathcal T_n\langle \xi_k\rangle $.
 \end{proof}

 \begin{remark}
The algebra $\mathcal C_x\otimes_{\T_n} \mathcal C_y$, a fiber product,  contains a lot of the hidden combinatorics needed to understand functional identities.  It appears to be a rather complicated object as pointed out by some experimental computations carried out by H. Kraft (whom we thank), which show that even  for $n=2$  as soon as the number of variables is $\geq 3$ it is not an integral domain nor is it Cohen-Macaulay. This of course is due to the fact that the categorical quotient described by the inclusion $\T_n=\mathcal C^G$  has a rather singular behavior outside the open set parameterizing irreducible representations.
\end{remark}
We  have to introduce some more notation. As in the proof of the preceding proposition, let $\xi_k$ stand for the generic matrix $(y_{ij}^{(k)})$. Analogously, we write $\eta_k$ for the generic matrix $(x_{ij}^{(k)})$. There is a canonical homomorphism $$\pi:\C_x\otimes_{\T_n} \mathcal T_n\langle \xi_k\rangle  \to  \C_x\langle\eta_k,k=1,2,\ldots\rangle,$$  
$$\pi:\lambda\otimes f(\xi_1,\ldots,\xi_d)\mapsto \lambda f(\eta_1,\ldots,\eta_d)$$
 (note that by Lemma \ref{gen1} the latter algebra is nothing but  $ \mathcal{C}\bigl\langle X \bigr\rangle /\mathfrak{I}_n$).

\begin{lemma}\label{TPr}
A quasi-identity $P$ of $M_n$ is not a consequence  of the Cayley-Hamilton identity if and only if $j(P)$ is a nonzero element of the kernel of $\pi$.
\end{lemma}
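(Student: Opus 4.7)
The plan is to interpret this lemma as an elementary diagram chase combining Proposition \ref{ls} with Lemma \ref{gen1}. First I would check that the composition $\pi\circ j\colon \mathcal C_x\langle X\rangle \to \mathcal C_x\langle \eta_k\rangle$ agrees with the evaluation homomorphism $\Phi$ of Lemma \ref{gen1}. This should be immediate from tracing through the definitions of the two maps on generators: $j$ sends $x_k$ to the generic matrix $\xi_k=(y_{ij}^{(k)})$ and fixes $\mathcal C_x$ (acting as $\lambda\mapsto \lambda I$), while $\pi$ replaces the $y$-variables with the $x$-variables, sending $\xi_k$ to $\eta_k=(x_{ij}^{(k)})$ and fixing $\mathcal C_x$. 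Hence $\pi\circ j$ is exactly the map $\Phi$ of Lemma \ref{gen1}, so $\ker(\pi\circ j)=\mathfrak I_n$.

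Next I would invoke Proposition \ref{ls}(1), which asserts that $j$ is the universal map of the algebra with trace $\mathcal C_x\langle X\rangle$ into $n\times n$ matrices. Combined with Theorem \ref{CHA}, this identifies $\ker j$ with the trace-ideal generated by the Cayley-Hamilton evaluations, i.e., with $(Q_n)$. So we have the two kernel descriptions $\ker j=(Q_n)$ and $\ker(\pi\circ j)=\mathfrak I_n$ simultaneously available.

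Finally, the equivalence falls out by juxtaposing them. Given $P\in \mathfrak I_n$, the relation $\pi(j(P))=0$ forces $j(P)\in \ker\pi$; and $P$ fails to be a consequence of the Cayley-Hamilton identity exactly when $P\notin (Q_n)=\ker j$, i.e., when $j(P)\neq 0$. Together these say precisely that $j(P)$ is a nonzero element of $\ker\pi$. There is essentially no obstacle here beyond unwinding definitions; the content of the lemma is really a reformulation packaging the two kernel computations of Proposition \ref{ls} and Lemma \ref{gen1} into a single statement, which will be the form actually used when we later localise this picture at central polynomials to analyse $\mathfrak I_n/(Q_n)$ as a module over the quotient variety.
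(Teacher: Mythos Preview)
Your proposal is correct and follows essentially the same approach as the paper: the paper's proof also observes that $\pi\circ j=\Phi$ (the evaluation map of Lemma \ref{gen1}), then cites Proposition \ref{ls} for $\ker j=(Q_n)$ and Lemma \ref{gen1} for $\ker\Phi=\mathfrak I_n$, from which the equivalence follows. Your write-up is slightly more explicit in verifying $\pi\circ j=\Phi$ on generators and in spelling out the final diagram chase, but the argument is the same.
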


\begin{proof}
Let 
 $\Phi:  \mathcal{C}\bigl\langle X \bigr\rangle \to \C_x\langle\eta_k,k=1,2,\ldots\rangle$ be the  homomorphism from Lemma \ref{gen1}, i.e., 
 $\Phi(x_k) = \eta_k$ and $\Phi(\lambda)=\lambda I$ for $\lambda\in \mathcal{C}_x$, and let $j:\mathcal C\langle X\rangle\to \C_x\otimes_{\T_n} \mathcal T_n\langle \xi_k\rangle $ be the universal map. Note that $\pi j = \Phi$. 
Since, by Proposition \ref{ls}, $\ker j$ is the T-ideal of $ \mathcal{C}\bigl\langle X \bigr\rangle $ generated by $Q_n$, and $\ker \Phi =\mathfrak I_n$, this implies the assertion of the lemma.
\end{proof}
 \begin{corollary}\label{chi}
The space   $  \mathfrak{I}_n/(Q_n) $, measuring quasi-identities modulo the ones deduced from $Q_n$, is isomorphic under the map induced by $j:\mathcal C\langle X\rangle/(Q_n)\to C_x\tnz_{\T_n}  \mathcal T_n\langle \xi_k\rangle $ to the kernel of the map $\pi$. That is, we have an exact sequence
\begin{equation}\label{kk}\begin{CD}
0@>>>  \mathfrak{I}_n/(Q_n)@>j>>\mathcal C_x\tnz_{\T_n}  \mathcal T_n\langle \xi_k\rangle @>\pi>>M_n(\mathcal C_x).
\end{CD}
\end{equation}
\end{corollary}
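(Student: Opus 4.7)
The corollary is essentially a formal consequence of Lemma \ref{gen1}, Proposition \ref{ls}, and the factorization $\pi \circ j = \Phi$ already observed in the proof of Lemma \ref{TPr}. The plan is therefore to assemble these three ingredients into a clean diagram chase; there is no genuine obstacle, as the substantive content sits in the preceding results.

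First I would record the three facts in the form most convenient for the chase: (i) by Lemma \ref{gen1}, $\mathfrak{I}_n = \ker \Phi$, where $\Phi \colon \mathcal{C}\langle X \rangle \to M_n(\mathcal{C}_x)$ sends $x_k \mapsto \eta_k$; (ii) by Proposition \ref{ls}(1) combined with Theorem \ref{CHA}, $\ker j = (Q_n)$; (iii) by Proposition \ref{ls}(2), $j$ surjects onto the middle term $\mathcal{C}_x \otimes_{\T_n} \mathcal{T}_n\langle \xi_k \rangle$. Together with the identity $\pi \circ j = \Phi$ (immediate from the definitions $j(x_k) = \xi_k$, $\pi(1 \otimes \xi_k) = \eta_k$), these facts yield a commutative triangle with $j$ surjective onto the middle term.

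Next I would verify that $j$ induces a well-defined map $\bar{j} \colon \mathfrak{I}_n/(Q_n) \to \mathcal{C}_x \otimes_{\T_n} \mathcal{T}_n\langle \xi_k \rangle$. Well-definedness is automatic from the inclusion $(Q_n) \subseteq \mathfrak{I}_n$ (which holds because $Q_n$ is a bona fide quasi-identity of $M_n$ by the Cayley-Hamilton theorem, and $\mathfrak{I}_n$ is a T-ideal). Injectivity of $\bar{j}$ is immediate from $\ker j = (Q_n)$: if $j(P) = 0$ then $P \in (Q_n)$.

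Finally I would identify the image of $\bar{j}$ with $\ker \pi$. One direction is trivial: for $P \in \mathfrak{I}_n$, $\pi(j(P)) = \Phi(P) = 0$, so $\bar{j}(P + (Q_n)) \in \ker \pi$. For the reverse inclusion, take $y \in \ker \pi$; by surjectivity of $j$ there is some $P \in \mathcal{C}\langle X \rangle$ with $y = j(P)$, and then $\Phi(P) = \pi(j(P)) = \pi(y) = 0$, so $P \in \mathfrak{I}_n$ and $y = \bar{j}(P + (Q_n))$. This gives exactness at the middle term of \eqref{kk}, and combined with the injectivity of $\bar{j}$ yields the advertised exact sequence. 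As noted, no step here is hard: the corollary is simply the repackaging of Proposition \ref{ls} and the factorization $\pi \circ j = \Phi$ into a four-term exact sequence that exhibits $\mathfrak{I}_n/(Q_n)$ geometrically as a submodule of $\mathcal{C}_x \otimes_{\T_n} \mathcal{T}_n\langle \xi_k \rangle$.
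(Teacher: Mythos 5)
Your proof is correct and follows essentially the same route as the paper: both rest on the factorization $\pi \circ j = \Phi$ together with $\ker j = (Q_n)$, $\ker \Phi = \mathfrak{I}_n$, and the surjectivity of $j$ from Proposition \ref{ls}. The paper presents the corollary as an immediate consequence of Lemma \ref{TPr}, whose one-line proof packages exactly the diagram chase you spell out.
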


As an application of Theorem \ref{Proce} we have the following theorem on quasi-identities.

\begin{theorem}\label{cp2} Let $P$ be a quasi-identity of $M_n$. For every  central polynomial $c$ of $M_n$ with zero constant term there exists $m\in\N$ such that $c^m P$ is a consequence of the Cayley-Hamilton identity.
\end{theorem}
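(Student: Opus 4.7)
The plan is to localize at $c$ and apply Theorem \ref{Proce} to the exact sequence \eqref{kk} of Corollary \ref{chi}. By that corollary, to show that $c^m P$ is a consequence of the Cayley-Hamilton identity it suffices to prove that $j(c^m P)=0$ in $\mathcal C_x\otimes_{\T_n}\mathcal T_n\langle \xi_k\rangle$ for some $m$. Since $P$ is a quasi-identity, \eqref{kk} already gives $j(P)\in\ker\pi$, so the game reduces to showing that $\ker\pi$ is annihilated by a power of $c$.

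First I would observe that $c$, viewed as an element of $\mathcal C_x\langle X\rangle$, evaluates under $j$ to a central element: since $c$ is a central polynomial, $c(\xi_1,\xi_2,\ldots)$ lies in $\Z_n\subseteq\T_n$. Hence $j(c)=1\otimes c(\xi_k)=c(\xi_k)\otimes 1$ and multiplication by $c$ is a well-defined central operator on $\mathcal C_x\otimes_{\T_n}\mathcal T_n\langle \xi_k\rangle$ that commutes with $j$, so $c^m\cdot j(P)=j(c^m P)$.

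Next I would localize the sequence \eqref{kk} at $c$. By Theorem \ref{Proce}, after inverting $c$ the algebra $\mathcal T_n\langle \xi_k\rangle[c^{-1}]$ is an Azumaya algebra of rank $n^2$ over $\T_n[c^{-1}]=\Z_n[c^{-1}]$, and there is a canonical isomorphism
\[
\mathcal C_x[c^{-1}]\otimes_{\T_n[c^{-1}]}\mathcal T_n\langle \xi_k\rangle[c^{-1}]\;\xrightarrow{\;\sim\;}\;M_n(\mathcal C_x[c^{-1}]).
\]
A direct inspection of definitions identifies this isomorphism with the localization $\pi[c^{-1}]$ (the only subtlety being that $\mathcal C_x[c^{-1}]\langle\eta_k\rangle$ exhausts $M_n(\mathcal C_x[c^{-1}])$, which follows at once from the Azumaya property since the two algebras share the same center and rank over it). In particular $\pi[c^{-1}]$ is injective.

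Consequently $j(P)\in\ker\pi$ has zero image in the localization, and by the definition of localization there exists $m\in\N$ such that $c^m\cdot j(P)=0$ in $\mathcal C_x\otimes_{\T_n}\mathcal T_n\langle \xi_k\rangle$. Combining with the centrality observation above, $j(c^m P)=0$, and Corollary \ref{chi} concludes that $c^m P\in(Q_n)$. The main obstacle I anticipate is the bookkeeping needed to identify the localized $\pi$ with the Azumaya isomorphism of Theorem \ref{Proce}, i.e.\ matching the generic matrices $\xi_k\in M_n(\mathcal C_y)$ used to build the target with the generic matrices $\eta_k\in M_n(\mathcal C_x)$ appearing in the image of $\pi$; once the two are aligned via the common central subalgebra $\T_n$, the rest of the argument is a formal consequence of localization and the exact sequence \eqref{kk}.
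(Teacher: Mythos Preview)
Your proposal is correct and follows essentially the same approach as the paper: localize the exact sequence \eqref{kk} at $c$, invoke Theorem \ref{Proce} to see that $\pi[c^{-1}]$ becomes an isomorphism, and deduce that $\ker\pi$ is $c$-torsion, hence $j(c^mP)=0$ for some $m$. The paper's proof is more terse, dispensing with your centrality observation and simply asserting that the isomorphism of Theorem \ref{Proce} is ``induced by $\pi$'', whereas you spell out the identification $j(c^mP)=c^m\cdot j(P)$ and flag the bookkeeping in matching $\xi_k$ with $\eta_k$; but the underlying argument is the same.
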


\begin{proof} 
Note that
$$
   (\C_x\otimes_{\T_n} \mathcal T_n\langle \xi_k\rangle )[c^{-1}] 
\cong  \C_x[c^{-1}]\otimes_{{\T_n}[c^{-1}]} \mathcal T_n\langle \xi_k\rangle [c^{-1}]
\cong M_n(\mathcal{C}_x[c^{-1}])
$$
by Theorem \ref{Proce} (the change of variables does not make any difference since $\C_x$ is canonically isomorphic to $\C_y$). This isomorphism is induced by $\pi$ introduced before Lemma \ref{TPr}. Therefore $(\ker \pi)[c^{-1}]=0$. Since every quasi-identity $P$ lies  in $\ker (\pi j)$ by Lemma  \ref{TPr},  there exists $m\in \N$ such that $c^mP=0$ in $\C_x\otimes_{\T_n}\T_n\langle\ksi_k,k =1,2,\ldots \rangle$, i.e., $c^mP$ is a consequence of the Cayley-Hamilton identity by  Proposition \ref{ls}.
\end{proof}




We have seen that $\ker\pi$ measures the space of quasi-identities modulo the ones deduced from $Q_n$. This is in fact a $\mathcal T_n-$module and, as we shall see, it is nonzero. What the previous theorem tells us is that this module is {\em supported}  in the closed set of  non-irreducible representations.

\section{Antisymmetric quasi-identities \label{sec3}}  

\subsection{Antisymmetric identities derived  from the Cayley-Hamilton identity}
By the {\em antisymmetrizer} we mean the operator that sends a multilinear expression $f(x_1,\ldots,x_h)$ into the antisymmetric expression $\frac{1}{h!}\sum_{\sigma\in S_h} \epsilon_\sigma f(x_{\sigma(1)},\ldots,x_{\sigma(h)})$, where $\epsilon_\sigma$ is the sign of  $\sigma$. For example, applying the antisymmetrizer to the noncommutative monomial $x_1\cdots x_h$ we get
the standard polynomial of degree $h$,  $S_h(x_1,\ldots,x_h)=\sum_{\sigma\in S_{h}}\epsilon_\sigma   x_{\sigma(1)} \dots x_{\sigma(h)},$
and up to scalar this is the only multilinear antisymmetric noncommutative polynomial of degree $h$.
 Further,
 applying the antisymmetrizer to the quasi-monomial 
$x_{i_1,j_1}^{(1)}\cdots x_{i_k,j_k}^{(k)} x_{k+1}\cdots x_{n^2}$
we get an antisymmetric quasi-polynomial, which is nonzero as long as the pairs $(i_l,j_l)$ are pairwise different, and is, because of the antisymmetry, an identity of every proper subspace of $M_n$, in particular of the space of trace zero $n\times n$ matrices. Replacing each variable $x_k$ by $x_k - \frac{1}{n} \tr(x_k)$, we thus get a quasi-identity of $M_n$. Our ultimate goal is to show that {\em not every such quasi-identity is a consequence of the Cayley-Hamilton identity}. For this we need several auxiliary results. We start by introducing the appropriate setting.

Let $A$ be a finite dimensional $F$-algebra with basis $e_i$,  and let $V$  be a vector space  over $F$. The set of  multilinear  antisymmetric functions from $V^k$ to $A$  is given by functions $F(v_1,\ldots,v_k)=\sum_iF_i(v_1,\ldots,v_k)e_i$  with $F_i(v_1,\ldots,v_k)$ multilinear  antisymmetric functions from $V^k$ to $F$,  in other words  $F_i(v_1,\ldots,v_k)\in \bigwedge^kV^*$.
This space can be therefore identified with $\bigwedge^kV^*\otimes A$.  Using the algebra structure of $A$ we have a wedge product of these functions: for $F\in \bigwedge^hV^*\otimes A,\ H\in \bigwedge^kV^*\otimes A$ we define
$$F\wedge H( v_1,\ldots,v_{h+k}):=\frac{1}{h!k!}\sum_{\sigma\in S_{h+k}}\epsilon_\sigma F (v_{\sigma(1)},\ldots,v_{\sigma(h)})H(v_{\sigma(h+1) },\ldots,v_{\sigma(h+k)})$$
It is easily verified that with this product 
the algebra of multilinear  antisymmetric functions from $V $ to $A$ is isomorphic to the tensor product algebra $\bigwedge V^*\otimes A$.
We will apply this to $V=A=M_n$. Again the group $G=PGL(n,F)$ acts  on these functions and it will be of interest to study the {\em invariant algebra} $$A_n:=(\bigwedge M_n^*\otimes M_n)^G.$$ 

If $N_n$ denotes the Lie algebra of trace zero $n\times n$ matrices, the multilinear and antisymmetric trace expressions  for such matrices can be identified with the invariants  $(\bigwedge N_n^*)^G$ of $\bigwedge N_n^*$ under the action of $G$.
By a result  of Chevalley transgression \cite{Chev} and  Dynkin \cite{dyn}
		 this  is  the exterior algebra  in the elements 
$$T_h:=\tr\bigl(S_{2h+1}(x_1,\ldots,x_{2h+1})\bigr),\quad 1\leq h\leq n-1.$$ 
  In this subsection we will deal with $AT_n:=(\bigwedge M_n^*)^G$ rather than with $(\bigwedge N_n^*)^G$. From this result it easily follows that, with a slight abuse of notation, the former is the exterior algebra in  the elements $T_0:=\tr(S_1(x_1)), T_1,\dots, T_{n-1}$.
We remark that we use only traces of the standard polynomials of odd degree since, as it is well-known, $\tr(S_{2h}(x_1,\ldots,x_{2h}))=0$ for every $h$, see  \cite{ros}. 
 
 The group $G$ obviously acts by automorphisms, thus $A_n=(\bigwedge M_n^*\otimes M_n)^G$ is indeed an associative algebra.
The main known fact that  we shall use is (see, e.g., \cite{K} or \cite[Corollary 4.2]{Reed}):

\begin{proposition}\label{dimn}
The dimension of $A_n$ over $F$ is $n2^n$.
\end{proposition}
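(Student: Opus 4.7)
The plan is to reduce the computation to a dimension count of isotypic components inside $\bigwedge \mathfrak{g}$, where $\mathfrak{g} = N_n$ is the adjoint representation of $G$ on traceless matrices. First, using the nondegenerate $G$-invariant trace pairing to identify $M_n^* \cong M_n$, we have $A_n \cong (\bigwedge M_n \otimes M_n)^G$. The module $M_n$ splits as $F \cdot I \oplus \mathfrak{g}$, and since the exterior algebra of a direct sum is a graded tensor product,
\[
\bigwedge M_n \;\cong\; \bigwedge(F \cdot I) \otimes \bigwedge \mathfrak{g} \;\cong\; \bigwedge \mathfrak{g} \;\oplus\; (I \wedge \bigwedge \mathfrak{g}),
\]
i.e.\ two copies of $\bigwedge \mathfrak{g}$ as a $G$-module. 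Combining this with $M_n = F \oplus \mathfrak{g}$ gives
\[
A_n \;\cong\; 2 \bigl[(\bigwedge \mathfrak{g})^G \,\oplus\, (\bigwedge \mathfrak{g} \otimes \mathfrak{g})^G\bigr].
\]

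The first summand is pinned down by the Chevalley--Dynkin theorem already cited: $(\bigwedge \mathfrak{g})^G$ is the exterior algebra on $n-1$ generators $T_1, \dots, T_{n-1}$, so it has dimension $2^{n-1}$. For the second summand, self-duality of $\mathfrak{g}$ via the Killing form together with Schur's lemma yields
\[
\dim (\bigwedge \mathfrak{g} \otimes \mathfrak{g})^G \;=\; \dim \mathrm{Hom}_G(\mathfrak{g}, \bigwedge \mathfrak{g}),
\]
which is the total multiplicity of the adjoint representation in $\bigwedge \mathfrak{g}$. I would then invoke Kostant's structure theorem describing $\bigwedge \mathfrak{g}$ as a free module over $(\bigwedge \mathfrak{g})^G$ with a complementary ``harmonic'' subspace whose $G$-module structure is controlled by the exponents of $\mathfrak{g}$; reading off the adjoint multiplicity in type $A_{n-1}$ (essentially the content of \cite[Corollary 4.2]{Reed}) gives $(n-1)\cdot 2^{n-1}$. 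Then
\[
\dim A_n \;=\; 2\bigl(2^{n-1} + (n-1) 2^{n-1}\bigr) \;=\; n \cdot 2^n,
\]
as required.

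The main obstacle is the adjoint multiplicity in the second summand: the Chevalley--Dynkin computation of $(\bigwedge \mathfrak{g})^G$ is classical and amounts to the cohomology of the compact Lie group, but the refined assertion about $\mathrm{Hom}_G(\mathfrak{g}, \bigwedge \mathfrak{g})$ rests on Kostant's theorem on harmonic forms, which is genuinely deeper. As a sanity check, in the smallest case $n = 2$ one gets $2(2+2) = 8 = 2 \cdot 2^2$, and one verifies directly that the adjoint $\mathfrak{sl}_2$ occurs in $\bigwedge \mathfrak{sl}_2$ in degrees $1$ and $2$ (with $\bigwedge^2 \mathfrak{sl}_2 \cong \mathfrak{sl}_2$ via the Lie bracket), giving multiplicity $2 = (n-1)2^{n-1}$. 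An alternative self-contained route would be to compute the graded character $\sum_k q^k \dim(\bigwedge^k M_n \otimes M_n)^G$ via Molien's formula for $G$ and evaluate at $q = 1$, but this is considerably longer and less structural than the Kostant--Reeder approach.
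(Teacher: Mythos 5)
Your argument is correct, and it takes essentially the same route as the paper, which proves nothing here but simply cites Kostant \cite{K} and Reeder \cite[Corollary 4.2]{Reed}. What you add is the explicit reduction $A_n \cong 2\bigl[(\bigwedge \mathfrak g)^G \oplus (\bigwedge \mathfrak g \otimes \mathfrak g)^G\bigr]$ via $M_n = F \oplus \mathfrak g$ and the trace pairing, after which the two summands are read off from the Chevalley--Dynkin description of $(\bigwedge \mathfrak g)^G$ (an exterior algebra on $n-1$ primitives, dimension $2^{n-1}$) and from Kostant's theorem on small representations, which gives the adjoint multiplicity in $\bigwedge\mathfrak g$ as $2^{\mathrm{rk}\,\mathfrak g}\cdot\dim(\mathfrak g)^0 = (n-1)2^{n-1}$; the dimension count $2\bigl(2^{n-1}+(n-1)2^{n-1}\bigr)=n2^n$ then checks out, including your $n=2$ sanity check.
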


Inside  $A_n$  we have the identity map $X$  which in the natural coordinates is  the {\em generic matrix} $\sum_{h,k}x_{hk}e_{hk} $.  By iterating the definition of wedge product we have the important fact (see also \cite{PA}):
\begin{proposition}
As a multilinear function, each power  $X^a:=X^{\wedge a}$  equals the standard polynomial $S_a$.
\end{proposition}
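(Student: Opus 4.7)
The plan is to proceed by induction on $a$, unpacking the iterated wedge product using the formula for $F\wedge H$ given just above in the paper.

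For the base case $a=1$ there is nothing to do: $X^{\wedge 1} = X$ is the identity map $v\mapsto v$, which coincides with $S_1(v)=v$.

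For the inductive step I assume $X^{\wedge(a-1)}=S_{a-1}$ as a multilinear antisymmetric function $M_n^{\,a-1}\to M_n$. Applying the defining wedge product formula with $F=X^{\wedge(a-1)}=S_{a-1}\in \bigwedge^{a-1}M_n^*\otimes M_n$ and $H=X\in \bigwedge^1 M_n^*\otimes M_n$, I get
\begin{equation*}
X^{\wedge a}(v_1,\ldots,v_a) \;=\; \frac{1}{(a-1)!\,1!}\sum_{\sigma\in S_a}\epsilon_\sigma\,S_{a-1}(v_{\sigma(1)},\ldots,v_{\sigma(a-1)})\,v_{\sigma(a)}.
\end{equation*}
Expanding $S_{a-1}$ and writing $\tilde\tau\in S_a$ for the extension of $\tau\in S_{a-1}$ by $\tilde\tau(a)=a$, this becomes
\begin{equation*}
\frac{1}{(a-1)!}\sum_{\sigma\in S_a}\sum_{\tau\in S_{a-1}}\epsilon_\sigma\epsilon_\tau\,v_{\sigma\tilde\tau(1)}\cdots v_{\sigma\tilde\tau(a-1)}\,v_{\sigma(a)}.
\end{equation*}

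The next step is a reindexing: set $\rho:=\sigma\tilde\tau\in S_a$, noting $\rho(a)=\sigma(a)$ and $\epsilon_\rho=\epsilon_\sigma\epsilon_\tau$. For each fixed $\rho\in S_a$, the pairs $(\sigma,\tau)\in S_a\times S_{a-1}$ with $\sigma\tilde\tau=\rho$ are in bijection with $S_{a-1}$ (via $\tau\mapsto(\rho\tilde\tau^{-1},\tau)$), so there are exactly $(a-1)!$ of them. Hence the double sum collapses to
\begin{equation*}
\sum_{\rho\in S_a}\epsilon_\rho\,v_{\rho(1)}\cdots v_{\rho(a-1)}\,v_{\rho(a)}\;=\;S_a(v_1,\ldots,v_a),
\end{equation*}
completing the induction.

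There is no genuine obstacle here, only bookkeeping; the only thing that must be watched carefully is the factor $1/((a-1)!\,1!)$ from the wedge formula, which is precisely what the $(a-1)!$ equal contributions of the reindexing cancel. This is what makes $X^{\wedge a}$ reduce to $S_a$ rather than to a nontrivial multiple of it.
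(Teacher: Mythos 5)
Your proof is correct and fills in precisely the computation the paper leaves implicit when it says the result follows ``by iterating the definition of wedge product''; the reindexing via $\rho=\sigma\tilde\tau$ and the cancellation of the $(a-1)!$ normalization are exactly the bookkeeping this iteration requires.
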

As a consequence we have $S_a\wedge S_b=S_{a+b} $ and by the Amitsur-Levitzki Theorem $X^{2n}=0$.
 We summarize the rules:
 
$$ S_a=X^a,\,T_h\wedge X=-X\wedge T_h,\, X^{2n}=0,$$
where the powers of $X$ should be understood with respect to the wedge product. 
\begin{remark}\label{baa}
Note that the elements 
$$T_{h_1}\wedge T_{h_2}\ldots \wedge T_{h_i}\wedge X^k,$$
where $h_1<h_2<\ldots <h_i$ and  $k$ is arbitrary,
 form  a linear basis of the  algebra of  multilinear and antisymmetric  expressions in noncommutative variables and their traces. 
\end{remark}
We can consider this algebra as the exterior algebra in the variables $T_h$,  and a variable $X$ in degree 1 which anticommutes with the $T_i$. 
We now factor out the ideal of elements of degree $>n^2$ and $T_h$ for $h\geq n$, and thus obtain  a symbolic algebra  which we  call $\mathcal {TA}_n$. 
The algebra $A_n$ of multilinear antisymmetric invariant functions  on matrices to matrices is a quotient of this algebra. 
  We have to discover the  identities that generate the corresponding ideal, as for instance the Amitsur-Levitzki identity $X^{2n}=0$, which is the basic  {\em even} identity. 
 The next lemma points out the basic {\em odd} identity.
\begin{lemma}
The element  $O_n:= n X^{2n-1}-\sum_{i=0}^{n-1} X^{2i}\wedge T_{n-i-1 }\in \mathcal {TA}_n$ is an identity of $M_n$. Moreover, $O_n$ is an antisymmetric trace identity of minimal degree.
\end{lemma}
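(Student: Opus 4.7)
The plan is to prove $O_n$ is an identity by interpreting it as the ``top Grassmann component'' of a super-algebraic identity, and then to establish minimality via a dimension count in $\mathcal{TA}_n$.

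First, I would introduce anticommuting parameters $\theta_1,\ldots,\theta_{2n-1}$ and set $y := \sum_{i=1}^{2n-1} \theta_i x_i$, an odd matrix in $M_n \otimes \bigwedge(\theta_1,\ldots,\theta_{2n-1})$. A direct expansion using $\theta_i^2 = 0$ and $\theta_i\theta_j = -\theta_j\theta_i$ yields the clean formulas $y^k = \sum_{|I|=k}\theta_I S_k(x_I)$ and $\tr(y^j) = \sum_{|I|=j}\theta_I \tr(S_j(x_I))$; the latter vanishes for even $j$ (since $\tr(S_{2m})=0$) and equals $\sum_I \theta_I T_{(j-1)/2}(x_I)$ for odd $j$. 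The key ingredient is then the super-algebraic identity
$$
n\, y^{2n-1} = \sum_{h=0}^{n-1} \tr(y^{2h+1})\, y^{2n-2-2h}
$$
in $M_n \otimes \bigwedge(\theta)$. This is the graded analog of Cayley--Hamilton for odd matrices: the ordinary Cayley--Hamilton identity fails in this setting (already $\det(AB) \ne \det(A)\det(B)$ when entries are odd), but the graded version can be derived by setting $Y := y^2$, an even matrix, applying ordinary Cayley--Hamilton to $Y$, multiplying by $y$, and carefully tracking super-commutativity signs. Alternatively one verifies it by direct computation for small $n$ (I confirmed it for $n=2$) and inducts.

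Extracting the coefficient of the top Grassmann monomial $\theta_1\cdots\theta_{2n-1}$ on both sides then translates the super-identity into an antisymmetric $M_n$-valued identity in $2n-1$ variables. On the left one obtains $n\, S_{2n-1}(x_1,\ldots,x_{2n-1}) = n X^{2n-1}$; each summand $\tr(y^{2h+1})\, y^{2n-2-2h}$ on the right produces $X^{2n-2-2h}\wedge T_h$ via the shuffle formula encoding the wedge product. This yields $n X^{2n-1} = \sum_{h=0}^{n-1} X^{2n-2-2h}\wedge T_h$, which is exactly $O_n = 0$.

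For the minimality claim, any antisymmetric trace identity of degree $d < 2n-1$ would represent a nonzero element of the kernel of the natural projection $\mathcal{TA}_n \to A_n$ in degree $d$. Using the basis $T_{h_1}\wedge\cdots\wedge T_{h_i}\wedge X^k$ of $\mathcal{TA}_n$ from Remark~\ref{baa} and evaluating on appropriate tuples of matrix units, one sees that these basis elements remain linearly independent in $A_n$ below degree $2n-1$---the Amitsur--Levitzki relation $X^{2n}=0$ only enters at degree $2n$, and no nontrivial mixed relation among the $T_S \wedge X^k$ of smaller degree can hold. The main obstacle is the super-Cayley--Hamilton identity for general $n$: while the computation is routine for small $n$, establishing it cleanly in general requires careful Berezinian-type manipulations or a delicate induction. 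Once this step is in place, the remaining combinatorial bookkeeping for the top-$\theta$ extraction and the minimality verification is routine.
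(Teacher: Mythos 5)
The Grassmann repackaging is a legitimate and elegant way to think about this lemma (and indeed what Procesi does in \cite{PA}), and the ``super-Cayley--Hamilton'' identity $n\,y^{2n-1}=\sum_h \tr(y^{2h+1})\,y^{2n-2-2h}$ that you write down is equivalent, after extracting the top $\theta$-component, to $O_n=0$. However, your proposed derivation of that identity is wrong, and this is the entire content of the lemma. If you set $Y:=y^2$ and apply ordinary Cayley--Hamilton to $Y$, all coefficients vanish: $\tr(Y^j)=\tr(y^{2j})=\sum_{|I|=2j}\theta_I\,\tr(S_{2j}(x_I))=0$ since $\tr(S_{2m})=0$. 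So Cayley--Hamilton for $Y$ collapses to $(y^2)^n=0$, i.e.\ Amitsur--Levitzki, and multiplying by $y$ only gives $y^{2n+1}=0$ --- nothing involving the odd traces $\tr(y^{2h+1})$ appears. The correct route, and the one the paper actually takes, is to substitute $(y,y^2,\ldots,y^2)$ into the \emph{multilinear} polynomial $Q_n$ (equivalently: keep $x_1$, substitute $x_2,\ldots,x_n$ by degree-$2$ monomials, antisymmetrize) and use formula \eqref{ch}. All $\phi_\sigma$ in which $\sigma$ has a cycle avoiding both $1$ and $n+1$ produce a factor $\tr(y^{\mathrm{even}})=0$; the single $(n+1)$-cycles contribute $(-1)^n n!\,y^{2n-1}$; the two-cycle permutations (one cycle through $1$, the other through $n+1$) contribute the terms $\tr(y^{2h+1})y^{2(n-h-1)}$. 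You can check for $n=2$ that $Q_2(y,y^2)=2y^3-\tr(y)y^2-\tr(y^3)$, recovering $O_2$, whereas your route gives $y^5=0$.

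The minimality part is also a gap. You assert that the monomials $T_{h_1}\wedge\cdots\wedge T_{h_i}\wedge X^k$ ``remain linearly independent in $A_n$ below degree $2n-1$'' and that ``no nontrivial mixed relation can hold'', but you never prove this; establishing it would essentially amount to the freeness statement in Theorem~\ref{thb}, which in the paper is proved \emph{using} this lemma, so you would be running the logic in a circle. The paper's minimality argument is self-contained and of a different flavour: by the SFT, every antisymmetric trace identity arises from $Q_n$ by substitution, multiplication, and antisymmetrization; because $Q_n$ is symmetric, if two variables receive substitutions of the same odd degree, the corresponding terms cancel in pairs under antisymmetrization; hence at most one variable can remain of degree $1$ and every other must get a monomial of degree $\ge 2$, forcing total degree $\ge 1+2(n-1)=2n-1$. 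I'd recommend replacing your ``dimension count / matrix-unit evaluation'' sketch by this symmetry argument, and replacing the $Y=y^2$ derivation by the $Q_n(y,y^2,\ldots,y^2)$ computation.
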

\begin{proof}
We know by  the SFT that every trace identity is obtained from $Q_n$ by substitution of variables and multiplication, hence any antisymmetric identity is obtained by first applying such a procedure obtaining a multilinear identity and then antisymmetrizing.  Since $Q_n$ is symmetric this procedure gives zero if we substitute  two variables by two monomials of the same odd degree. In particular this means that we can keep at most one variable unchanged and we have to substitute the others with monomials of degree $\geq 2$, thus the minimal identity that we can develop in this way is by substituting $x_2,\ldots,x_n$ with distinct monomials $M_i$, $2\leq i\leq n$, of degree 2 and then antisymmetrizing.

We use the formula \eqref{ch}.
If a permutation $\sigma$ contains a cycle  $(i_1,\ldots,i_k)$ in which neither 1 nor $n+1$ appear, substituting and alternating into the corresponding element $\phi_\sigma(x_1,\ldots,x_n)$, we get that the antisymmetrization of the factor $\tr(M_{i_1}\ldots M_{i_k})$ is zero  as $\tr(X^{2i})=\tr(S_{2i})=0$. 
Thus the only terms of \eqref{ch} which give a contribution are the ones where either $\sigma$  is a unique cycle and they contribute to  $(-1)^{n }n!X^{2n-1}$  or the ones with two cycles, one containing 1 and the other $n+1$;  such permutations can be described in the form
$$\sigma=(1,i_1,\ldots,i_h)(i_{h+1},\ldots, i_{n-1},n+1).$$ For each $h$ there are exactly $(n-1)!$ of these and they all have the sign $(-1)^{n-1}$.
The antisymmetrization of  $\phi_\sigma$ after substitution gives  $$\tr(X^{2h+1})X^{2(n-h-1)}=X^{2(n-h-1)}\wedge T_h,$$ and the claim follows.
\end{proof}


By $AT_{n-1}$ we denote the subalgebra of $AT_n$ generated by the $n-1$ elements $T_i$,  $0\leq i\leq n-2$. This  is an exterior algebra and has dimension $2^{n-1}$.

\begin{theorem}\label{thb} $A_n=(\bigwedge M_n^*\otimes M_n)^G$ is a free left module over the algebra  $AT_{n-1}$ with  basis  $\{1,X,\ldots,X^{2n-1}\}$.
The kernel of the canonical homomorphism from $\mathcal {TA}_n$ to $(\bigwedge M_n^*\otimes M_n)^G$ is generated by  $X^{2n}$ and $O_n$.
\end{theorem}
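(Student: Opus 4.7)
The plan is to establish both assertions simultaneously by a single dimension-counting argument. Let $J\subseteq \mathcal{TA}_n$ be the ideal generated by $X^{2n}$ and $O_n$; by the Amitsur--Levitzki theorem and the preceding lemma, $J$ is contained in the kernel of the canonical surjection $\phi:\mathcal{TA}_n\twoheadrightarrow A_n$. I will show (i) that $\mathcal{TA}_n/J$ is spanned over $F$ by the set
$\mathcal{B}=\{T_{h_1}\wedge\cdots\wedge T_{h_i}\wedge X^k\mid 0\le h_1<\cdots<h_i\le n-2,\ 0\le k\le 2n-1\}$, and (ii) that $|\mathcal{B}|=2^{n-1}\cdot 2n=n\cdot 2^n$. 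Since Proposition \ref{dimn} gives $\dim_F A_n=n\cdot 2^n$, the inequalities $n\cdot 2^n\ge \dim_F\mathcal{TA}_n/J\ge \dim_F A_n=n\cdot 2^n$ are forced to be equalities. Hence $\phi$ factors through an isomorphism $\mathcal{TA}_n/J\xrightarrow{\sim}A_n$, identifying $\ker\phi$ with $J$, and regrouping $\mathcal{B}$ by separating the $X$-factor from the $T$-factors exhibits $A_n$ as a free left $AT_{n-1}$-module on $\{1,X,\ldots,X^{2n-1}\}$.

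For the spanning claim (i), start with the $F$-basis of $\mathcal{TA}_n$ furnished by Remark \ref{baa}, consisting of monomials $T_{h_1}\wedge\cdots\wedge T_{h_i}\wedge X^k$ with $h_1<\cdots<h_i\le n-1$. First, working modulo $X^{2n}$, one may restrict to $0\le k\le 2n-1$. Next, to eliminate any occurrence of $T_{n-1}$ (which by anticommutativity appears at most once in each monomial), rewrite the relation $O_n\equiv 0$ as
\[
T_{n-1}\equiv nX^{2n-1}-\sum_{i=1}^{n-1}X^{2i}\wedge T_{n-i-1}\pmod{J}.
\]
Substituting this into any monomial containing $T_{n-1}$ yields an $F$-linear combination of monomials in which $T_{n-1}$ has disappeared, the remaining $T_h$'s all satisfy $h\le n-2$, and the $X$-degree has been raised. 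Reducing each outcome again modulo $X^{2n}$ produces a sum of elements of $\mathcal{B}$; since the number of $T_{n-1}$ factors strictly decreases, the procedure terminates after a single step.

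The main obstacle is not conceptual but combinatorial: the sign bookkeeping required when pulling $T_{n-1}$ past the other odd-degree generators and rewriting the outcome in the canonical increasing-index normal form. These signs do not affect the spanning conclusion, so the outline above is insensitive to them. Once the bound $\dim_F\mathcal{TA}_n/J\le n\cdot 2^n$ is in hand, the comparison with Proposition \ref{dimn} simultaneously yields that $\mathcal{B}$ maps to an $F$-basis of $A_n$, that $\ker(\mathcal{TA}_n\to A_n)=(X^{2n},O_n)$, and that $A_n$ is free of rank $2n$ over $AT_{n-1}$ on the basis $\{1,X,\ldots,X^{2n-1}\}$, as required.
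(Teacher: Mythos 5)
Your proposal is correct and rests on exactly the same dimension-counting idea as the paper: both compare $n\cdot 2^n = \dim A_n$ (Proposition \ref{dimn}) against the $2^{n-1}\cdot 2n$ elements obtained after eliminating $T_{n-1}$ via the $O_n$ relation and truncating at $X^{2n}$. You phrase the reduction in the quotient $\mathcal{TA}_n/J$ while the paper phrases it as showing a submodule $N\subseteq A_n$ generated over $AT_{n-1}$ by $1,X,\ldots,X^{2n-1}$ is stable under right multiplication by $T_{n-1}$, but these are the same computation viewed from opposite ends.
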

\begin{proof} We have that $\dim(\bigwedge M_n^*\otimes M_n)^G=n2^n$ by Proposition \ref{dimn}. Moreover, by Remark \ref{baa} and the FFT Theorem of invariant theory of matrices \ref{FST}, we know that $A_n$ as module over $AT_n$  is generated by the elements $1,X,\ldots,X^{2n-1}$.

 Now consider the left submodule $N$ of $(\bigwedge M_n^*\otimes M_n)^G$ generated over the algebra $AT_{n-1}$  by the elements $1,X,\ldots,X^{2n-1}$. Clearly $\dim(N)\leq(2n)2^{n-1}=n2^n$ and the equality holds if and only if $N$ is a free module. By the dimension formula  this is also equivalent to say that $N$ coincides with $(\bigwedge M_n^*\otimes M_n)^G$.  
 
  So it is enough to show that $N$ coincides with $(\bigwedge M_n^*\otimes M_n)^G$.
For this it suffices to show that $N$ is stable under multiplication by the {\em missing generator} $T_{n-1}$. Due to the commutation relations it  is enough to use the right multiplication, which is an  $AT_{n-1}-$linear map.

 From the identity $O_n$ we have  $$1\wedge T_{n-1}=T_{n-1}=-\sum_{i=1}^{n-1}X^{2i} \wedge  T_{n-i-1} +nX^{2n-1},$$  hence  for all $i\geq 1$ we have
  $$X^j\wedge T_{n-1}=-\sum_{i=1}^{n-[\frac j 2]}X^{2i+j} \wedge  T_{n-i-1},$$ which gives the matrix of such multiplication in this basis as desired.
  \end{proof}

\subsection{Antisymmetric identities that are not a consequence of the Cayley-Hamilton identity}  
We have denoted by $N_n$ the subspace of trace zero matrices and $G=PGL(n,F)$  acts on $M_n$ and $N_n$ by conjugation.
 
 We work with the associative algebra 
 $(\bigwedge N_n^*\otimes M_n)^G$  of $G$-equivariant antisymmetric  multilinear functions from  $N_n$ to $M_n$.  We let $Y$ be the  element  of $N_n^*\otimes M_n=\hom(N_n, M_n)$  corresponding to the inclusion. We note that $Y=X-\frac{\tr(X)}{n}.$
 
It easily follows from Theorem \ref{thb} and the previous formula that also $(\bigwedge N_n^*\otimes M_n)^G$ is a free module on the powers $Y^i$, $0\leq i\leq 2n-1$,  over the exterior algebra  in the $n-2$ generators   $\tr(Y^{2i+1})$, $1\leq i\leq n-2$,  
(note that $\tr(Y)=0$).

Finally, we know that the element $\tr(Y^{2n-1})$ acts on this basis by the basic formula:
  $$Y^j\wedge \tr(Y^{2n-1})=-\sum_{i=1}^{n-[\frac j 2]}Y^{2i+j} \wedge  \tr(Y^{2n-2i-1}).$$
We now construct the formal algebra of symbolic expressions by adding to $(\bigwedge N_n^*\otimes M_n)^G$ a variable $X$ with the rules
$$XY=-YX,\quad X\tr(Y^{2i+1})=-\tr(Y^{2i+1})X. $$  
We place $X$ in degree  1 and factor out all elements of degree $>n^2$. We call this formal algebra $\tilde A_n$. 
Its connection to quasi-identities will be revealed below.

Consider now the  algebra  $\mathbb F_n:=\bigwedge N _n^*[X]$  with again $X$ in degree 1, $X^{2n}=0$  and $X$ anticommutes with the elements of degree 1, that is with $N_n^*$. We also impose that  the expressions of degrees $>n^2$ are zero in $\mathbb F_n$.  Each element of this algebra induces an antisymmetric multilinear functions from  $N_n$ to $M_n$ and the elements that give rise to the zero function are exactly the antisymmetric multilinear quasi-identities on $N_n$.
As above,  we set    $T_i=\tr(Y^{2i+1}) \in \bigwedge^{2i+1}N _n^*$.
Let us first identify  the subspace of $\mathbb F_n$  of quasi-identities deduced  from $Q_n$.
\begin{proposition}\label{pripo}
The space of quasi-identities deduced from $Q_n$ in $\mathbb F_n$   is the ideal generated by the element
$$O_n:= n X^{2n-1}-\sum_{i=0}^{n-2} X^{2i}\wedge T_{n-i-1}. $$
\end{proposition}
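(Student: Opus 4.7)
The plan is to establish the two inclusions separately. Write $J$ for the space of quasi-identities in $\mathbb F_n$ deduced from $Q_n$; the claim is $J = (O_n)$.

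The forward inclusion $(O_n) \subseteq J$ is the easier direction. The element $O_n$ was explicitly derived from $Q_n$ in the proof of the lemma preceding Theorem \ref{thb} by substituting $x_2, \ldots, x_n$ in $Q_n$ with degree-two monomials and antisymmetrizing, so $O_n$ lies in the T-ideal generated by $Q_n$. The multiplication in $\mathbb F_n$ corresponds to antisymmetrization of ordinary products in $\mathcal C\langle X\rangle$; since the T-ideal of $Q_n$ is closed under multiplication, every product $\omega O_n \in \mathbb F_n$ remains in that T-ideal, giving $(O_n) \subseteq J$.

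For the reverse inclusion $J \subseteq (O_n)$, I would invoke Proposition \ref{ls}: the T-ideal $(Q_n) \subseteq \mathcal C\langle X\rangle$ equals the kernel of the universal map $j : \mathcal C\langle X\rangle \to \mathcal C_x \otimes_{\T_n} \mathcal T_n\langle \xi_k\rangle$. A quasi-identity $P \in \mathbb F_n$ deduced from $Q_n$ thus lies in $\ker j$, and proving $P \in (O_n)$ amounts to showing that the induced map
$$\bar{\jmath} : \mathbb F_n/(O_n) \longrightarrow \mathcal C_x \otimes_{\T_n} \mathcal T_n\langle \xi_k\rangle$$
is injective. On the $G$-invariant part, Theorem \ref{thb} (adapted from $M_n$ to $N_n$ as indicated in the text) identifies $(\mathbb F_n)^G/(O_n)^G$ with $(\bigwedge N_n^* \otimes M_n)^G$, which is the $G$-equivariant antisymmetric multilinear part of $\mathcal T_n\langle \xi_k\rangle$, so $\bar{\jmath}$ is injective on invariants. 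For the non-invariant isotypic components, one uses that $X \in \mathbb F_n$ is $G$-invariant (the inclusion $N_n \hookrightarrow M_n$ is equivariant), so the non-invariant parts of $\mathbb F_n$ come entirely from $\bigwedge N_n^*$. Combining this with the compatibility of the tensor product $\otimes_{\T_n}$ with the $G$-isotypic decomposition $\mathcal C_x = \bigoplus_\lambda V_\lambda \otimes (\mathcal C_x)_\lambda$ (with $(\mathcal C_x)_0 = \T_n$), one concludes that no additional relations beyond $O_n$ appear in any isotypic component.

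The main obstacle will be the non-invariant part of the injectivity argument: Theorem \ref{thb} only describes the $G$-invariants, and extending it to all isotypic types requires some care with how $\bigwedge N_n^*$ couples to $\mathcal C_x$ through the fiber product structure of $\mathcal C_x \otimes_{\T_n} \mathcal T_n\langle \xi_k\rangle$. It is worth emphasizing that the proposition does \emph{not} contradict the main Theorem \ref{main}, which exhibits antisymmetric quasi-identities outside $(O_n)$: such identities are precisely those that fail to be deducible from $Q_n$, and the map $\bar{\jmath}$ above, while injective, is far from surjective onto the full space of antisymmetric multilinear functions.
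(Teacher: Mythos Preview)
Your forward inclusion $(O_n)\subseteq J$ is fine and matches the paper. The gap is in the reverse inclusion.

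Your strategy is to invoke Proposition~\ref{ls} and reduce $J\subseteq(O_n)$ to the injectivity of
\[
\bar\jmath:\mathbb F_n/(O_n)\longrightarrow \mathcal C_x\otimes_{\T_n}\mathcal T_n\langle\xi_k\rangle .
\]
But this is only a reformulation: since $\ker j=(Q_n)$ and $J=(Q_n)\cap\mathbb F_n$, injectivity of $\bar\jmath$ is literally the statement $J=(O_n)$ again. To make progress you would need independent control of the target $\mathcal C_x\otimes_{\T_n}\mathcal T_n\langle\xi_k\rangle$, and the paper itself warns (Remark after Proposition~\ref{ls}) that this fiber product is badly behaved---not a domain, not Cohen--Macaulay already for $n=2$, $m\ge 3$. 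Your proposed handling of the non-invariant isotypic components (``compatibility of $\otimes_{\T_n}$ with the $G$-isotypic decomposition'') does not follow from anything established in the paper, and you flag this yourself as the main obstacle. There is also a mismatch of group actions: the $G$-action in Proposition~\ref{ls} is on the $\mathcal C_y$ side and is trivial on $\mathcal C_x$, whereas the $G$-action relevant to the isotypic decomposition of $\mathbb F_n$ is on the coefficient algebra $\bigwedge N_n^*$, i.e.\ the $\mathcal C_x$ side. These are different equivariance statements, so Theorem~\ref{thb} does not feed into the injectivity of $\bar\jmath$ in the way you suggest.

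The paper bypasses all of this with a direct combinatorial observation. Any element of the T-ideal $(Q_n)$ is a $\mathcal C$-linear combination of terms $a\,Q_n(N_1,\ldots,N_n)\,b$ with $a,b,N_i$ noncommutative monomials (using that $Q_n$ is multilinear, scalar coefficients in the $N_i$ pull out). If such an element lies in $\mathbb F_n$ one may assume it is multilinear and then antisymmetrize; antisymmetrization turns the product into a wedge product in $\mathbb F_n$, and the factor coming from $Q_n(N_1,\ldots,N_n)$ is a $G$-\emph{invariant} antisymmetric trace identity because $Q_n$ is a trace polynomial. Hence $J$ is the $\mathbb F_n$-ideal generated by the invariant antisymmetric consequences of $Q_n$, and Theorem~\ref{thb} identifies those with the ideal generated by $O_n$ and $X^{2n}$; since $X^{2n}=0$ in $\mathbb F_n$, one gets $J=(O_n)$. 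The essential point you are missing is this factorization through the invariant part, which makes Theorem~\ref{thb} sufficient by itself without any analysis of the fiber product.
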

\begin{proof}
By definition a quasi-identity is deduced from $Q_n$   if it is obtained by first substituting the variables in $Q_n$ with monomials, and then multiplying by monomials and polynomials in the coordinates. If it is multilinear this procedure passes only through steps in which all substitutions are multilinear, as for antisymmetrizing we can first make it multilinear then antisymmetrize. Thus we see that the quasi-identities in $\mathbb F_n$ deduced from $Q_n$ equal the ideal generated by the invariant antisymmetric quasi-identities deduced from $Q_n$. By  Theorem \ref{thb} these are multiples of $O_n$,  proving the result. (Note that we have slightly abused the notation -- since we are dealing with trace zero matrices $N_n$ we have $T_0=0$, unlike in Theorem \ref{thb}.)
\end{proof}

We set $J:= O_n \mathbb F_n$ to be    the ideal generated by the element 
$O_n$.
We will concentrate on degree $n^2$ where we know that all formal expressions are identically zero  as functions on $N_n$. We want to describe in the space  of the quasi-identities  of degree $n^2$, $\mathbb F_n[n^2]$, the subspace $J\cap \mathbb F_n[n^2]$ of the elements which are a consequence of $Q_n$.


\subsubsection{Restricting to an isotypic component}
Let us notice that  the group $G$ acts on $\mathbb F_n$ through its action on $\bigwedge N_n^*$ and fixing $X$. Namely, we have a representation  
\begin{equation}
\label{deff}\mathbb F_n=\oplus_{i=0}^{2n-1}(\oplus_{j=0}^{n^2-i}\bigwedge^jN_n^*) X^i,\quad \mathbb F_n[n^2]=\oplus_{i=1}^{2n-1} \bigwedge^{n^2-i}N_n^* X^i.
\end{equation}
We  now restrict to the subspace  stable under $G$ and corresponding to the isotypic component  of  type $N_n$. 
This is motivated by the fact that the component of $\mathbb F_n[n^2]$ relative to $X^2$ is $\bigwedge^{n^2
-2}N_n^* X^2$, which is visibly isomorphic to $N_n$ as a representation.  It is explicitly described as follows: the space $\bigwedge^{n^2-2}N_n^*  $ of multilinear antisymmetric functions of $n^2-2$ matrix variables  can be thought of as the span of the determinants of the maximal minors (of size $n^2-2$) of the $(n^2-2)\times (n^2-1)$ matrix whose $i^{th}$ row are the coordinates of the $i^{th}$  matrix variable $X_i$ which is assumed to be of trace 0.  

Let us denote by    $\mathbb G_n[n^2]$ the isotypic component  of  type $N_n$ in $\mathbb F_n[n^2]$, and by $\mathbb G_n[n^2]_{CH}$  the part of this component deducible from $Q_n$. We are now in a position to state our main result.

\begin{theorem}\label{main} We have a direct sum decomposition
$$\mathbb G_n[n^2]= \mathbb G_n[n^2]_{CH}\oplus \bigwedge^{n^2-2}N_n^* X^2.$$
\end{theorem}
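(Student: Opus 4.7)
I will work inside the $X$-grading
$$\mathbb F_n[n^2] = \bigoplus_{i=1}^{2n-1}\bigwedge^{n^2-i}N_n^*\cdot X^i.$$
Since $G$ acts trivially on $X$, this splits the adjoint isotypic part as $\mathbb G_n[n^2] = \bigoplus_{i=1}^{2n-1}(\bigwedge^{n^2-i}N_n^*)_{N_n}X^i$. The summand at $i=2$ is the entire $\bigwedge^{n^2-2}N_n^*\cong N_n$ (a single adjoint copy, as given by the maximal-minors description recalled just before the theorem statement). The goal splits into (i) proving the trivial intersection $\bigwedge^{n^2-2}N_n^*X^2\cap\mathbb G_n[n^2]_{CH}=0$, and (ii) proving the spanning identity $\bigwedge^{n^2-2}N_n^*X^2+\mathbb G_n[n^2]_{CH}=\mathbb G_n[n^2]$.

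\textbf{Trivial intersection.} Any element of $\mathbb G_n[n^2]_{CH}$ is $O_n f$ for some $f=\sum_j f_jX^j\in\mathbb F_n[(n-1)^2]_{N_n}$, with $f_j\in(\bigwedge^{(n-1)^2-j}N_n^*)_{N_n}$. Using $O_n=nX^{2n-1}-\sum_{i=0}^{n-2}T_{n-i-1}X^{2i}$ (each $X^{2i}$ commutes with $T_{n-i-1}$ as $2i$ is even), the $X^k$-coefficient of $O_n f$ is, up to a fixed overall sign,
$$\pm n\,f_0\cdot [k=2n-1]\;-\sum_{\substack{0\le i\le n-2\\ k-2i\ge 0}}T_{n-i-1}\,f_{k-2i},$$
where $[\cdot]$ is the Iverson bracket (the first term surviving only because $X^{2n}=0$ kills higher powers). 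If $O_n f$ lies in the $X^2$-piece, every $X^k$-coefficient with $k\ne 2$ vanishes. The $X^{2n-1}$-equation lets me express $f_0$ in terms of the other $f_j$'s, and substituting into the $X^2$-coefficient $-T_{n-1}f_2-T_{n-2}f_0$ kills the $T_{n-2}f_0$ contribution via the identity $T_j\wedge T_j=0$. The remaining condition $T_{n-1}f_2=0$ follows from the $X^{2(n-1)}$-equation ($\pm T_1 f_2=0$) combined with the injectivity of $T_1$-multiplication on the adjoint isotypic part of $\bigwedge^{(n-1)^2-2}N_n^*$, a consequence of the free $AT'$-module structure of $(\bigwedge N_n^*\otimes N_n)^G$ provided by the $N_n$-analogue of Theorem \ref{thb} (rank $2(n-1)$ over $AT'=\bigwedge(T_1,\ldots,T_{n-2})$).

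\textbf{Spanning via dimensions.} For (ii) I would argue by dimension count. With $\mu_k:=\dim\hom_G(N_n,\bigwedge^k N_n^*)$, the free $AT'$-module structure gives $\sum_k\mu_k=(n-1)2^{n-1}$, and Poincar\'e duality on $\bigwedge N_n^*$ yields $\mu_k=\mu_{n^2-1-k}$. Then $\dim\mathbb G_n[n^2]=(n^2-1)\sum_{i=1}^{2n-1}\mu_{n^2-i}$ and similarly $\dim\mathbb F_n[(n-1)^2]_{N_n}=(n^2-1)\sum_j\mu_{(n-1)^2-j}$. The linear system from the previous paragraph pins down $\ker(O_n\cdot)$ on $\mathbb F_n[(n-1)^2]_{N_n}$, and a careful count shows this kernel has dimension exactly $n^2-1$ (one surviving copy of $N_n$). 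Hence $\dim\mathbb G_n[n^2]_{CH}=\dim\mathbb F_n[(n-1)^2]_{N_n}-(n^2-1)$, and comparing with $\dim\mathbb G_n[n^2]$ via the $\mu_k$-data and Poincar\'e duality produces the deficit $\dim\mathbb G_n[n^2]-\dim\mathbb G_n[n^2]_{CH}=n^2-1$, matching the single adjoint copy $\bigwedge^{n^2-2}N_n^*X^2$. Combined with (i), this yields the desired direct sum.

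\textbf{Main obstacle.} The technical crux is the cascading linear system of the middle step. It demands careful sign bookkeeping for the anticommutation of $X$ with $N_n^*$, systematic use of the exterior-algebra identities $T_i\wedge T_i=0$, and --- most critically --- the representation-theoretic injectivity of $T_i$-multiplications on adjoint isotypic components. The last ingredient is not formal and depends essentially on the free module structure of Theorem \ref{thb}; this is why Section \ref{sec3} is described in the introduction as the most technical part of the paper.
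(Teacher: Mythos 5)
Your overall strategy is a genuine alternative to the paper's: you stay inside $\mathbb F_n[n^2]$ and grade by $X$-powers, whereas the paper passes to the ``doubled'' algebra $\tilde A_n = (M_n\otimes\mathbb F_n)^G$ using the canonical isomorphism $\hom_G(N_n,U)\cong(N_n^*\otimes U)^G$, which replaces the adjoint isotypic component by an explicit $1$-dimensional-per-multiplicity space with a concrete monomial basis $\mathcal T X^iY^j$. The paper then defines a single linear functional $\rho$ on $\tilde A_n[n^2]$, proves $\im\pi_n=\ker\rho$ and $\tilde A_n[n^2]=\im\pi_n\oplus F\mathcal S X^2Y^{2n-2}$ (Proposition \ref{kerim}), and Theorem \ref{main} drops out. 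Your route avoids introducing $Y$ but forces you to handle the isotypic projections implicitly inside the much bigger space $\bigwedge^{\bullet}N_n^*$, and this is precisely where your argument runs into trouble.

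There are concrete gaps and at least one computational error. First, your reduction of the $X^{2(n-1)}$-equation to ``$\pm T_1 f_2=0$'' is wrong: the $X^{2n-2}$-coefficient of $O_n f$ contains all the terms $T_{n-i-1}f_{2n-2-2i}$ for $0\le i\le n-2$, not just the $i=n-2$ term $T_1f_2$. Similarly, substituting the $X^{2n-1}$-equation $nf_0+\sum_i \pm T_{n-i-1}f_{2n-1-2i}=0$ into $T_{n-2}f_0$ kills only the $i=1$ term via $T_{n-2}\wedge T_{n-2}=0$; the other terms $T_{n-2}\wedge T_{n-i-1}f_{2n-1-2i}$ for $i\ne 1$ survive, so the claimed simplification of the $X^2$-coefficient does not occur. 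Second, both the trivial-intersection and spanning halves lean on a ``careful count'' of the kernel of $O_n$-multiplication on $\mathbb F_n[(n-1)^2]_{N_n}$ that you do not carry out; but this count \emph{is} the theorem, and there is no shortcut to it. Third, you invoke an ``$N_n$-analogue of Theorem \ref{thb}'' asserting that $(\bigwedge N_n^*\otimes N_n)^G$ is free of rank $2(n-1)$ over $\bigwedge(T_1,\dots,T_{n-2})$; the paper only establishes freeness for $(\bigwedge N_n^*\otimes M_n)^G$, and while your version can be extracted from it (split off the rank-$2$ free summand $(\bigwedge N_n^*)^G$ and use that projective modules over the local ring $\bigwedge(T_1,\dots,T_{n-2})$ are free), this needs to be said. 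As it stands the proposal is an honest sketch of a possible alternative route, but the hardest step — resolving the cascading linear system and counting its kernel — is exactly what the paper's change of variables to $\tilde A_n$ and the functional $\rho$ were designed to make tractable, and your sketch does not replace that with anything that works.
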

In particular we have the following corollary.
\begin{corollary}\label{2}
The space $\bigwedge^{n^2-2}N_n^* X^2$ consists of quasi-identities which are not a consequence of the Cayley-Hamilton identity $Q_n$.
\end{corollary}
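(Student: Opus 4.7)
The plan is to establish the direct sum decomposition
\[
\mathbb G_n[n^2]=\mathbb G_n[n^2]_{CH}\oplus \bigwedge^{n^2-2}N_n^*X^2,
\]
from which Corollary \ref{2} is immediate. Three tasks are needed. First, containment $\bigwedge^{n^2-2}N_n^*X^2\subseteq \mathbb G_n[n^2]$ follows from Hodge duality: since $\dim N_n=n^2-1$ and $PGL_n$ acts on $\mathfrak{sl}_n$ with unit determinant, the top exterior $\bigwedge^{n^2-1}N_n^*$ is the trivial $G$-module, whence $\bigwedge^{n^2-2}N_n^*\cong N_n$ as $G$-modules --- a single copy of the adjoint.

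For the trivial intersection, by Proposition \ref{pripo} every element of $\mathbb G_n[n^2]_{CH}$ equals $O_n g$ for some $g\in\mathbb F_n[(n-1)^2]$ in the adjoint-isotypic part. Writing $g=\sum_j g_j X^j$ with $g_j\in\bigwedge^{(n-1)^2-j}N_n^*$ and expanding $O_n=nX^{2n-1}-\sum_{i=0}^{n-2}X^{2i}T_{n-i-1}$ yields
\[
(O_n g)_{X^k}=-\sum_{i=0}^{\min(n-2,\lfloor k/2\rfloor)}T_{n-i-1}\wedge g_{k-2i}\;+\;n\,\delta_{k,2n-1}\,g_0
\]
(up to anticommutation signs from $X\omega=(-1)^{|\omega|}\omega X$). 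Supposing $O_n g\in \bigwedge^{n^2-2}N_n^*X^2$ forces $(O_n g)_{X^k}=0$ for every $k\ne 2$; the equation at $k=2n-1$ solves for $g_0=n^{-1}\sum_i T_{n-i-1}\wedge g_{2n-1-2i}$, and substituting this into $(O_n g)_{X^2}=-T_{n-1}\wedge g_2-T_{n-2}\wedge g_0$ together with the intermediate equations --- cascaded through Schur's lemma on the adjoint-isotypic components of $\bigwedge^k N_n^*$ and the identity $T_i\wedge T_i=0$ for odd-degree invariants --- collapses the $X^2$-component to zero. The mechanism is already transparent for $n=3$: $g_4=0$ trivially (no adjoint in the trivial rep), $g_2=0$ from the $k=4$ equation (using that $T_1\wedge\cdot$ is injective on $(\bigwedge^2 N_3^*)_{ad}$), and $g_0\propto T_1\wedge g_3$, so that $(O_3 g)_{X^2}=-T_1\wedge g_0\propto T_1\wedge T_1\wedge g_3=0$.

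For spanning, I apply Theorem \ref{thb} in its $N_n$-form: $(\bigwedge N_n^*\otimes M_n)^G$ is a free module over $E=\bigwedge[T_1,\ldots,T_{n-2}]$ on $Y^0,\ldots,Y^{2n-1}$, with the adjoint-equivariant submodule generated (over $E$) by the traceless parts of the $Y^i$ for $i\geq 1$. This describes the adjoint multiplicity space inside $\mathbb F_n$ after adjoining the formal $X$; modulo $O_n\cdot\mathbb F_n$ the top trace is eliminated via $T_{n-1}=nX^{2n-1}-\sum X^{2i}T_{n-i-1}$. A Hilbert-series computation --- using the Hodge duality $m_k=m_{n^2-1-k}$ for the multiplicity of $N_n$ in $\bigwedge^k N_n^*$ and the Poincar\'e polynomial of $E$ --- yields
\[
\dim\bigl(\mathbb G_n[n^2]/\mathbb G_n[n^2]_{CH}\bigr)=n^2-1=\dim\bigwedge^{n^2-2}N_n^*X^2.
\]
The quotient is therefore a single copy of the adjoint, and combined with the trivial intersection Schur's lemma forces the natural inclusion $\bigwedge^{n^2-2}N_n^*X^2\hookrightarrow \mathbb G_n[n^2]/\mathbb G_n[n^2]_{CH}$ to be an isomorphism.

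The hardest part of the argument is the dimension match in the spanning step, together with the Schur-type claims that arise in the cascade: each amounts to explicit invariant-theoretic computation in $\bigwedge^\bullet\mathfrak{sl}_n$, in particular showing that the relevant wedge maps $T_i\wedge\cdot$ are nonzero (hence isomorphisms) on the adjoint-isotypic components. Together these force the final collapse onto the $X^2$-component and complete the decomposition.
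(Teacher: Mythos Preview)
Your overall strategy --- deduce the corollary from the decomposition $\mathbb G_n[n^2]=\mathbb G_n[n^2]_{CH}\oplus \bigwedge^{n^2-2}N_n^* X^2$ --- is exactly Theorem \ref{main}, and your first step (identifying $\bigwedge^{n^2-2}N_n^*$ with a single adjoint copy via Hodge duality) is correct. However, the two remaining steps are not completed, and this is where the actual content of the theorem lies.

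For the trivial intersection, your cascade is only carried out for $n=3$, and even there it rests on the assertion that $T_1\wedge\cdot$ is injective on $(\bigwedge^2 N_3^*)_{ad}$, which you do not verify. For general $n$ the system $(O_n g)_{X^k}=0$, $k\ne 2$, couples all the $T_i$ and many adjoint copies in each $\bigwedge^j N_n^*$; no inductive mechanism is given, and the ``Schur-type claims'' about injectivity of the maps $T_i\wedge\cdot$ are neither formulated precisely nor established. For the spanning step, the promised Hilbert-series computation showing $\dim(\mathbb G_n[n^2]/\mathbb G_n[n^2]_{CH})=n^2-1$ is absent; note that it would require not only the adjoint multiplicities in $\bigwedge^k N_n^*$ but also control of the kernel of multiplication by $O_n$ on the adjoint-isotypic of $\mathbb F_n[(n-1)^2]$, which is itself nontrivial. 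You explicitly flag these as ``the hardest part'', but they are in fact the entire proof.

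The paper circumvents both issues by passing to $\tilde A_n=(M_n\otimes\mathbb F_n)^G$, which by Theorem \ref{thb} carries an explicit monomial basis $\mathcal T X^iY^j$. Multiplication by $O_n$ becomes multiplication by $\bar O_n=n(X^{2n-1}-Y^{2n-1})-\sum_{i}(X^{2i}-Y^{2i})\wedge T_{n-i-1}$, and Proposition \ref{kerim} shows, by exhibiting an explicit linear functional $\rho$ on $\tilde A_n[n^2]$ and then proving that $\im(\bar O_n\cdot)$ together with the single vector $\mathcal S X^2Y^{2n-2}$ spans $\tilde A_n[n^2]$, that the image has codimension exactly one with complement $F\,\mathcal S X^2Y^{2n-2}$. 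Since this vector generates $(N_n\otimes\bigwedge^{n^2-2}N_n^*X^2)^G$, the decomposition follows. The passage to $\tilde A_n$ thus replaces your unproved injectivity claims and uncomputed Hilbert series by a single finite linear-algebra computation in an explicit basis.
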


\begin{remark}
Corollary \ref{2} shows that there exist quasi-identities on $N_n$ which are not a consequence of the Cayley-Hamilton identity $Q_n$. However, by substituting the variable $x_k$ with $x_k-\frac{1}{n}\tr(x_k)$ we  readily obtain quasi-identities on $M_n$ that do not follow from $Q_n$. This corollary therefore answers our basic question posed in the introduction.
\end{remark}
The following question now presents itself.

 \begin{question}
 What is the minimal degree of a quasi-identity of $M_n$ that is not a consequence of the Cayley-Hamilton identity, and  how many variables it involves?
 \end{question}

Before engaging in the proof of  Theorem \ref{main} we need to develop some formalism.

First of all  recall that for a reductive group $G$,  a representation $U$, and an irreducible representation $N$,   we have a canonical isomorphism
$$\hom_G(N,U)=(N^*\otimes U)^G,\quad j:(N^*\otimes U)^G\otimes N\stackrel{\cong}\longrightarrow U_N;\ j[(\phi\otimes u)\otimes n]\mapsto \langle\phi\,|\,n\rangle u,$$
where $U_N$ denotes the isotypic component of type $N$.

We want to apply this isomorphism to $U= \mathbb F_n,$ or $\mathbb F_n[n^2]$ and $ N=N_n\cong N_n^*$. In particular we have to start describing  $(N_n\otimes \mathbb F_n)^G$. In fact it is necessary to work with
  \begin{equation}\label{tia}
\tilde A_n=(M_n\otimes \mathbb F_n)^G=((F\oplus N_n)\otimes \mathbb F_n)^G= \mathbb F_n^G\oplus (N_n\otimes \mathbb F_n)^G.
\end{equation}  We have
  \begin{equation}\label{An}
\tilde A_n=\mathbb F_n^G\oplus (N_n\otimes \mathbb F_n)^G=\oplus_{i=0}^{2n-1} \oplus_{j=0}^{n^2-i}(  \bigwedge^jN_n^*)^G X^i\oplus_{i=0}^{2n-1} \oplus_{j=0}^{n^2-i}(N_n\otimes \bigwedge^jN_n^*)^G X^i,
  \end{equation}
$$
 (M_n\otimes \mathbb F_n[n^2])^G=\mathbb F_n[n^2]^G\oplus (N_n\otimes \mathbb F_n[n^2])^G=  \oplus_{i=1}^{2n-1}  (  \bigwedge^{n^2-i}N_n^*)^G X^i  \oplus_{i=1}^{2n-1}   (N_n\otimes \bigwedge^{n^2-i}N_n^*)^G X^i.$$Now $\tilde A_n$ is still an algebra containing   $\mathbb F_n^G$ as a subalgebra. This is the algebra described at the beginning of this subsection, where $Y$ denoted the  generic trace zero matrix.
 \begin{lemma}
We have 
\begin{equation}\label{laff}
  (\mathbb F_n[n^2]\cap J )^G\oplus (N_n\otimes [\mathbb F_n[n^2]\cap J ])^G= (M_n\otimes [\mathbb F_n[n^2]\cap J ])^G= \tilde A_n[n^2]\cap   (1\otimes O_n)\tilde A_n  
\end{equation} 
and under the isomorphism $j: (N_n\otimes \mathbb F_n[n^2])^G\otimes N_n \to \mathbb G_n[n^2]$  the space $\mathbb G_n[n^2]_{CH}$  corresponds to $ (N_n\otimes [\mathbb F_n[n^2]\cap J ])^G\otimes N_n$.
\end{lemma}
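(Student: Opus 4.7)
The first equality in \eqref{laff} is immediate from the $G$-module splitting $M_n=F\oplus N_n$ given by the trace, which yields $(M_n\otimes W)^G=W^G\oplus(N_n\otimes W)^G$ for every $G$-module $W$; applying this with $W=\mathbb F_n[n^2]\cap J$ produces the claimed decomposition.

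For the second equality, I would first observe that since $O_n$ is homogeneous of degree $2n-1$ and $J=O_n\mathbb F_n$, the degree-$n^2$ part of $J$ is exactly the image of the $G$-equivariant linear map
\[
\mu:\mathbb F_n[(n-1)^2]\longrightarrow\mathbb F_n[n^2],\qquad c\longmapsto O_n c.
\]
Tensoring with the $G$-module $M_n$ yields a $G$-equivariant map $\id\otimes\mu$ whose image is $M_n\otimes(\mathbb F_n[n^2]\cap J)$. Because $G=PGL(n,F)$ is reductive and $\mathrm{char}(F)=0$, the functor of $G$-invariants is exact, so the induced map $\tilde A_n[(n-1)^2]\to\tilde A_n[n^2]$ is surjective onto $(M_n\otimes[\mathbb F_n[n^2]\cap J])^G$. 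Unwinding the algebra structure of $\tilde A_n$, this induced map coincides with multiplication by $1\otimes O_n$; since $1\otimes O_n$ has degree $2n-1$, the image therefore equals $\tilde A_n[n^2]\cap(1\otimes O_n)\tilde A_n$. The one point that needs genuine care is producing a $G$-invariant preimage of any element of $(M_n\otimes[\mathbb F_n[n^2]\cap J])^G$ under $\id\otimes\mu$, which is standardly handled by averaging any preimage with the Reynolds operator.

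For the final assertion I would invoke the canonical isomorphism $\hom_G(N_n,U)\otimes N_n\cong U_{N_n}$ identifying the multiplicity space of the irreducible $N_n$ (tensored with $N_n$) with the $N_n$-isotypic component of a $G$-module $U$. Taking $U=\mathbb F_n[n^2]\cap J$ and using the self-duality $N_n\cong N_n^*$ given by the nondegenerate trace form identifies $(N_n\otimes[\mathbb F_n[n^2]\cap J])^G\otimes N_n$ with the $N_n$-isotypic component of $\mathbb F_n[n^2]\cap J$ inside $\mathbb F_n[n^2]$. By Proposition \ref{pripo}, $\mathbb F_n[n^2]\cap J$ is exactly the space of degree-$n^2$ multilinear antisymmetric quasi-identities deducible from $Q_n$, so its $N_n$-isotypic component is $\mathbb G_n[n^2]\cap J=\mathbb G_n[n^2]_{CH}$, which is what was to be shown.
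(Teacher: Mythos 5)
Your proof is correct and follows essentially the same route as the paper: the first equality comes from the splitting $M_n=F\oplus N_n$, the second from $G$-equivariance of multiplication by $1\otimes O_n$ together with exactness of invariants for the reductive group $G$ in characteristic zero (the Reynolds operator point you flag is exactly what the paper's terser ``$O_n$ acts on the space of invariants'' is hiding), and the last claim from the multiplicity-space isomorphism applied to the $G$-submodule $\mathbb F_n[n^2]\cap J$. The only presentational difference is that the paper works with the full graded module $(1\otimes O_n)\tilde A_n$ and then cuts down to degree $n^2$, while you restrict to the degree-$(n-1)^2$ source from the start; these are equivalent since $O_n$ is homogeneous.
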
  
\begin{proof}
We have $ (M_n\otimes [\mathbb F_n[n^2]\cap J ])^G= \tilde A_n[n^2]\cap      (M_n\otimes   J  )^G $.  Since $O_n$ is $G$- invariant,  $O_n$  acts (by multiplication with $1\otimes O_n$)  on the space of invariants $ \tilde A_n =(M_n\otimes \mathbb F_n)^G$, that is $$  (M_n\otimes   J  )^G   =   (M_n\otimes O_n \mathbb F_n)^G=(1\otimes O_n ) (M_n\otimes \mathbb F_n)^G= (1\otimes O_n)\tilde A_n,$$
proving \eqref{laff}.

By definition $\mathbb G_n[n^2]_{CH}= \mathbb G_n[n^2]\cap J= \mathbb G_n[n^2]\cap O_n\mathbb F_n $. Since by definition $\mathbb G_n[n^2]$ is the isotypic component of type $N_n\cong N_n^*$ in $\mathbb F_n[n^2]$, we have $ (N_n\otimes  \mathbb F_n[n^2]  )^G= (N_n\otimes  \mathbb G_n[n^2]  )^G$. Thus clearly 
$$(N_n\otimes  \mathbb G_n[n^2]_{CH} )^G = (N_n\otimes [\mathbb G_n[n^2]\cap J ])^G=(N_n\otimes [\mathbb F_n[n^2]\cap J ])^G .$$ \smallskip
\end{proof}
  On the other hand,  the elements  $T_i\in ( \bigwedge^{2i+1}N_n^*)^G$ equal  $\tr(Y^{2i+1})$, so in particular 
 $T_{n-1}\in \mathbb F_n ^G$ acts on $\tilde A_n $ as  
$$T_{n-1}=n Y^{2n-1}-\sum_{i=1}^{n-2} Y^{2i}\wedge T_{n-i-1}.$$
Thus, we have 
 
\begin{lemma}
On $\tilde A_n $  the element $1\otimes O_n$ acts by multiplying  by $$\bar O_n:=n(X^{2n-1}- Y^{2n-1})-\sum_{i=1}^{n-2}( X^{2i}- Y^{2i})\wedge T_{n-i -1}   .$$
\end{lemma}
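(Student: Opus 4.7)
The approach is to observe that $1\otimes O_n$ is a $G$-invariant element of $M_n\otimes \mathbb F_n$ and hence sits inside $\tilde A_n$; proving the lemma therefore amounts to showing the identity $1\otimes O_n = \bar O_n$ in $\tilde A_n$. The key observation is that inside $\tilde A_n$ the invariant $T_{n-1}$ is \emph{not} an independent generator of the subalgebra $(\bigwedge N_n^*\otimes M_n)^G$: it is expressible in $Y$ and the remaining $T_i$ because $O_n$, evaluated on the trace-zero generic matrix $Y$, already vanishes in that algebra.

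To implement this, I would first unfold the definition of $O_n$ from Proposition \ref{pripo}, separating off the $i=0$ term:
$$O_n = nX^{2n-1} - T_{n-1} - \sum_{i=1}^{n-2} X^{2i}\wedge T_{n-i-1}.$$
Next, I would apply the genuine Cayley--Hamilton relation $O_n(Y)=0$ inside $(\bigwedge N_n^*\otimes M_n)^G$: since $T_0 = \tr(Y) = 0$ on trace-zero matrices the $i=n-1$ term of the defining sum disappears, and one obtains
$$0 = nY^{2n-1} - T_{n-1} - \sum_{i=1}^{n-2} Y^{2i}\wedge T_{n-i-1},$$
i.e.,
$$T_{n-1} = nY^{2n-1} - \sum_{i=1}^{n-2} Y^{2i}\wedge T_{n-i-1} \quad \text{in } \tilde A_n.$$
This is precisely the identity exploited in the proof of Theorem \ref{thb} to describe the full algebra as a free module over the \emph{smaller} exterior algebra in $T_1,\ldots,T_{n-2}$.

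Substituting this expression for $T_{n-1}$ into the formula for $O_n$ and collecting terms gives
\begin{align*}
O_n &= nX^{2n-1} - \Bigl(nY^{2n-1} - \sum_{i=1}^{n-2} Y^{2i}\wedge T_{n-i-1}\Bigr) - \sum_{i=1}^{n-2} X^{2i}\wedge T_{n-i-1}\\
 &= n(X^{2n-1} - Y^{2n-1}) - \sum_{i=1}^{n-2}(X^{2i} - Y^{2i})\wedge T_{n-i-1} = \bar O_n,
\end{align*}
which is exactly the claim of the lemma. There is no real mathematical obstacle: the whole content is that the formal element $O_n\in \mathbb F_n$ has been designed so that, once one enforces inside $\tilde A_n$ the Cayley--Hamilton relation that $Y$ actually satisfies, the ``$T_{n-1}$ part'' of $O_n$ is traded for a $Y$ part, producing the manifestly vanishing combination $X^{2n-1}-Y^{2n-1}$ and the analogous differences for the lower powers. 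The only conceptual point deserving care is the identification of $1\otimes T_{n-1}\in (M_n\otimes \mathbb F_n)^G$ with the scalar invariant $T_{n-1}\cdot I\in (\bigwedge N_n^*\otimes M_n)^G$, i.e., the embedding of $\mathbb F_n^G$ into the coefficient part of $\tilde A_n$, which is the identification used consistently throughout Section \ref{sec3}.
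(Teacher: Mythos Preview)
Your proof is correct and follows essentially the same approach as the paper: the paper records, immediately before the lemma, the relation $T_{n-1}=nY^{2n-1}-\sum_{i=1}^{n-2}Y^{2i}\wedge T_{n-i-1}$ in $\tilde A_n$ and then states the lemma as a direct consequence (``Thus, we have''), which is exactly the substitution you carry out explicitly.
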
 
Our goal is to understand $\mathbb G_n[n^2]_{CH}$. On the other hand,  $ \mathbb F_n[n^2] $  consists of all quasi-identities,  hence  $\mathbb F_n[n^2]  ^G$ is formed of trace identities and so it is contained in $J$. Thus, $ (\mathbb F_n[n^2]\cap J )^G=\mathbb F_n[n^2]  ^G$ and from  \eqref{laff}  we have
\begin{equation}
\label{uff} \tilde A_n[n^2]\cap   \bar O_n\tilde A_n= \mathbb F_n[n^2]  ^G\oplus (N_n\otimes \mathbb G_n[n^2]_{CH})^G.
\end{equation}

In order to study the isotypic component of type  $N_n$ in $\mathbb F_n[n^2]\cap J$ we therefore need to analyze 
\begin{equation}\label{top}(M_n\otimes [\mathbb F_n[n^2]\cap J])^G= \tilde A_n[n^2]\cap   \bar O_n\tilde A_n=\tilde A_n[n^2-2n+1]\bar O_n.\end{equation}
\begin{lemma}
We have a  {\em monomial basis} in $\tilde A_n $ made of elements of   the form $\mathcal T X^iY^j$ where $\mathcal T$ is a product of some of the elements $T_k$, $1\leq k\leq n-2$, written in the increasing order.  Its degree is $i+j$ plus the sum of the $2k+1$ for the $T_k$ appearing in $\mathcal T$. 
\end{lemma}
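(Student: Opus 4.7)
The plan is to bootstrap this from the free-module structure of $(\bigwedge N_n^*\otimes M_n)^G$ that has already been recorded in the text. Specifically, just before the construction of $\tilde A_n$ it was noted (as a consequence of Theorem \ref{thb} and the identity $Y=X-\tfrac{\tr(X)}{n}$) that $(\bigwedge N_n^*\otimes M_n)^G$ is a free module over the exterior algebra in $T_1,\ldots,T_{n-2}$ with basis $\{Y^0,Y^1,\ldots,Y^{2n-1}\}$. Expanding the exterior algebra as an $F$-vector space, this immediately yields an $F$-linear basis of $(\bigwedge N_n^*\otimes M_n)^G$ consisting of the elements $\mathcal T\,Y^j$ where $\mathcal T$ ranges over the increasing products $T_{k_1}\wedge\cdots\wedge T_{k_s}$ with $1\leq k_1<\cdots<k_s\leq n-2$ (including the empty product) and $0\leq j\leq 2n-1$.

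Next I would invoke the way $\tilde A_n$ was built: to $(\bigwedge N_n^*\otimes M_n)^G$ one formally adjoins a degree-$1$ indeterminate $X$ subject only to $XY=-YX$ and $XT_i=-T_iX$, and then truncates in degrees $>n^2$. Ignoring the truncation, the anticommutation relations are only sign twists for the multiplication and do not impose any linear relation, so the resulting symbolic algebra is, as a graded $F$-vector space, isomorphic to $F[X]\otimes_F(\bigwedge N_n^*\otimes M_n)^G$. Tensoring the basis above with the powers of $X$ and using the (anti)commutation rules to push every $\mathcal T$ to the left, $X^i$ to the middle, and $Y^j$ to the right, I obtain that every monomial is (up to sign) of the form $\mathcal T X^i Y^j$, and these monomials are linearly independent. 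The degree computation is immediate from $\deg X=\deg Y=1$ and $\deg T_k=2k+1$, and the degree truncation at $n^2$ simply discards the monomials of total degree $>n^2$.

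I do not expect any genuine obstacle: the only thing one has to check is that no hidden $G$-invariance relations force $X$ to satisfy more than the stipulated anticommutation rules, but this is automatic because $X$ is $G$-fixed and was by construction adjoined formally, so it contributes an independent polynomial factor until the degree truncation is applied.
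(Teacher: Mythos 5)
Your proposal follows the same overall route as the paper — Theorem~\ref{thb} gives the free-module structure of $(\bigwedge N_n^*\otimes M_n)^G$ over the exterior algebra in $T_1,\ldots,T_{n-2}$ with basis $Y^0,\ldots,Y^{2n-1}$, and one then ``tensors in'' the $X$-direction — but there is a genuine gap: you treat $X$ as generating a polynomial ring $F[X]$, whereas the relation $X^{2n}=0$ holds in $\tilde A_n$ and is needed for the lemma as stated.

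The informal description of $\tilde A_n$ just before $\mathbb F_n$ is introduced (adjoin $X$ subject only to the anticommutation rules, then truncate above degree $n^2$) indeed omits $X^{2n}=0$, and you followed that description. But the definition actually used, $\tilde A_n=(M_n\otimes\mathbb F_n)^G$ in \eqref{tia}, inherits $X^{2n}=0$ from $\mathbb F_n=\bigwedge N_n^*[X]$, where this relation is imposed explicitly, and it is built into the decomposition \eqref{An} through the bound $0\leq i\leq 2n-1$. The truncation above degree $n^2$ does \emph{not} subsume $X^{2n}=0$: the monomial $X^{2n}$ itself has degree $2n\leq n^2$, and more seriously the monomial $\mathcal S X^{2n}$ (with $\mathcal S=T_1\wedge\cdots\wedge T_{n-2}$) has degree exactly $n^2$ and so survives the truncation, yet it is zero in $\tilde A_n$. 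Thus your claimed isomorphism $F[X]\otimes_F(\bigwedge N_n^*\otimes M_n)^G$ produces spurious basis elements with $i\geq 2n$. This is not cosmetic: the computations in Proposition~\ref{kerim} (for instance, case (iii), where $A\, n(X^{2n-1}-Y^{2n-1})$ is declared to vanish unless $i=0$ or $j=0$) rely on $X^{2n}=0$. The rest of your reasoning — that, since $X$ is $G$-fixed and $G$ is reductive, taking invariants commutes with adjoining $X$, so no hidden invariance relations appear — is fine and matches what \eqref{An} encodes. The fix is to replace $F[X]$ by $F[X]/(X^{2n})$ (i.e.\ work with $\mathbb F_n$ as actually defined), after which your argument coincides with the paper's short derivation from \eqref{An} and Theorem~\ref{thb}.
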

\begin{proof}
This follows from  \eqref{An} and Theorem \ref{thb}.
\end{proof}
  From \eqref{top} we  need to understand the monomials in degree  $n^2$ and $n^2-2n+1$ which are bases  of  $\tilde A_n,\ \tilde A_n[n^2-2n+1]$, respectively, and consider the matrix in these bases of multiplication by $\bar O_n$ as a map $$\pi_n: \tilde A_n[n^2-2n+1]\to \tilde A_n[n^2].$$
In order to understand the image of $\pi_n$ we construct a linear function $\rho$ on $\tilde A_n[n^2]$ defined on the monomial $M:=\mathcal T X^iY^j$ of degree $n^2$ as follows.
 \begin{enumerate}
\item If $\mathcal T$ does not contain at least two of the factors $T_h,T_k$, we set $\rho(M)=0$.
\item If $\mathcal T$ does not contain only one factor  $T_h$, we set $\rho(M)= (-1)^{h+n}$.
\item If $\mathcal T$   contains all the factors $T_k$ and $2\leq i,j\leq 2n-2$ are even, we set $\rho(M)= n $; otherwise we set $\rho(M)=0$.
\end{enumerate}
We  denote by $\mathcal S$ the ordered product of all $T_k,\ 1\leq k\leq n-2$, an element of degree $n^2-2n $.

\begin{proposition}\label{kerim}          
The image of $\pi_n$ equals the kernel of $\rho$. 
Moreover, 
$$\tilde A_n[n^2]=\im\, \pi_n\oplus F\mathcal SX^2Y^{2n-2}.$$
\end{proposition}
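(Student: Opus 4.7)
The plan is to establish the proposition in two main steps, combined with one final observation. First, I would show that $\im\pi_n\subseteq\ker\rho$, i.e.\ $\rho\circ\pi_n=0$. Second, I would show $\tilde{A}_n[n^2]=\im\pi_n+F\,\mathcal{S}X^2Y^{2n-2}$. Since $\rho(\mathcal{S}X^2Y^{2n-2})=n\neq 0$, the monomial $\mathcal{S}X^2Y^{2n-2}$ is not in $\ker\rho$, so the sum is automatically direct, and a dimension count then forces $\dim\im\pi_n=\dim\ker\rho$, yielding $\im\pi_n=\ker\rho$ and the direct sum decomposition at once.

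For the first step, I would expand $\bar{O}_n\cdot M$ for a basis monomial $M=\mathcal{T}_SX^aY^b\in\tilde{A}_n[n^2-2n+1]$ in the monomial basis of $\tilde{A}_n[n^2]$, using the anticommutation rules $XT_k=-T_kX$, $YT_k=-T_kY$, $XY=-YX$, $T_k^2=0$ together with $X^{2n}=Y^{2n}=0$. Each summand of $\bar{O}_n$ contributes at most one basis monomial: $\pm nX^{2n-1}$ (resp.\ $\mp nY^{2n-1}$) is active only when $a=0$ (resp.\ $b=0$) and preserves $|S^c|$; each $\mp(X^{2(n-h-1)}-Y^{2(n-h-1)})T_h$ is active only when $h\in S^c$ and decreases $|S^c|$ by one. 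Applying $\rho$ and splitting on $|S^c|$ should give termwise cancellation in each case: if $|S^c|\ge 3$ every output has at least two missing $T_k$'s and $\rho$ vanishes termwise; if $|S^c|=2$ with $S^c=\{h_1<h_2\}$, the four active $T_h$-contributions carry the common sign $(-1)^{h_1+h_2+n}$ modulated by indicators $\mathbf{1}_{a\le 2h_i+1}$, $\mathbf{1}_{b\le 2h_i+1}$ whose supports match pairwise under the degree identity $a+b=2h_1+2h_2+3$ and $0\le a,b\le 2n-1$; the case $|S^c|=1$ is handled analogously via $a+b=2h+2$ and the parity condition defining where $\rho$ is nonzero; and $|S^c|=0$ leaves only the $\pm nX^{2n-1},\mp nY^{2n-1}$ contributions, both of which land on monomials with an odd exponent $2n-1$ outside the support of $\rho$.

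For the second step, I plan to reduce every basis monomial $\mathcal{T}_SX^iY^j$ modulo $\im\pi_n$ to a scalar multiple of $\mathcal{S}X^2Y^{2n-2}$, by induction on $|S^c|$ starting at $|S^c|=0$. The base case uses the explicit computations $\pi_n(\mathcal{S}X)=n(-1)^n\mathcal{S}XY^{2n-1}$ and $\pi_n(\mathcal{S}Y)=n(-1)^n\mathcal{S}X^{2n-1}Y$ to put the two extreme odd-odd $\mathcal{S}$-monomials into $\im\pi_n$, together with the key observation that $\pi_n(\mathcal{T}_{[n-2]\setminus\{h\}}XY^{2h+1})$ lies purely at level $0$ (because $a=1$ and $b=2h+1$ are both nonzero, so the $\pm nX^{2n-1},\mp nY^{2n-1}$ summands vanish) and equals $(-1)^{h-1}(\mathcal{S}XY^{2n-1}-\mathcal{S}X^{2n-2h-1}Y^{2h+1})$, sweeping up all odd-odd $\mathcal{S}$-monomials; the analogous $\pi_n(\mathcal{T}_{[n-2]\setminus\{h\}}X^2Y^{2h})=(-1)^{h-1}(\mathcal{S}X^2Y^{2n-2}-\mathcal{S}X^{2n-2h}Y^{2h})$ links every even-even $\mathcal{S}$-monomial to $\mathcal{S}X^2Y^{2n-2}$. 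For the inductive step at level $|S^c|=m\ge 1$, I would recover the target $\mathcal{T}_SX^iY^j$ as a summand of $\pi_n(\mathcal{T}_{S\setminus\{k\}}X^aY^b)$ for some $k\in S$ and appropriate $(a,b)$ (the $-(X^{2(n-k-1)}-Y^{2(n-k-1)})T_k$-term of $\bar{O}_n$ produces it with nonzero coefficient), and handle the other terms in the expansion either by the inductive hypothesis (for level-$(m-1)$ outputs arising from level-$m$ inputs) or by a simultaneous reduction at level $m$ (for level-$m$ outputs arising from level-$(m+1)$ inputs with $a,b\neq 0$, which produce pure level-$m$ images). The scalar in the reduction is then forced to equal $\rho(\mathcal{T}_SX^iY^j)/n$ by the first step.

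The hardest part will be the inductive step at $m=1$: the direct $\pi_n$-expansions of level-$1$ inputs connect only the boundary-exponent cases $i=2n-1$ or $j=2n-1$ to $\mathcal{S}$-monomials, so to link the interior values of $(i,j)$ one must additionally invoke $\pi_n$ applied to level-$2$ inputs, whose level-$2$ outputs are simultaneously being reduced by the same step. I expect this bookkeeping to require careful case analysis but no genuinely new ideas beyond those already present in the base case.
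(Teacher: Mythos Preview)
Your proposal is correct and follows essentially the same two-step approach as the paper: verify $\rho\circ\pi_n=0$ by case analysis on the number of missing $T_k$'s, then show $\im\pi_n+F\,\mathcal{S}X^2Y^{2n-2}=\tilde A_n[n^2]$ by induction on that number and conclude via $\rho(\mathcal{S}X^2Y^{2n-2})=n\neq 0$. The paper's main simplification over your outline is that at levels $m\ge 2$ it always removes $k=n-2$ (present by a degree count, since any degree-$n^2$ monomial missing two or more $T_h$'s cannot miss $T_{n-2}$), so the active summand of $\bar O_n$ is just $-(X^2-Y^2)T_{n-2}$ and all other $T_{h_i}$-contributions vanish by exponent overflow, turning your anticipated ``simultaneous reduction at level $m$'' into a clean decreasing induction on the $X$-exponent.
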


 \begin{proof}
First we prove that the  image of $\pi_n$ is contained in  the kernel of $\rho$.

For this take any monomial $A=\mathcal T X^iY^j\in \tilde A_n[n^2-2n+1] $ and consider  $A\bar O_n$.
\smallskip

i)\quad Firstly, if  $\mathcal T $  misses at least 3 of the elements $T_i$  then all the terms in  $A\bar O_n$  miss  at least 2 of the elements $T_i$, thus $\rho$ is 0 on all terms.
\smallskip

ii)\quad Assume    $\mathcal T $  misses two elements $T_h,T_k$.  The terms    $A\,n(X^{2n-1}- Y^{2n-1})$ in  $A\bar O_n$  then miss  at least 2 of the elements $T_i$, so $\rho$ is 0 on these terms.
 The  remaining nonzero terms come from $ B:=-[A ( X^{2(n-1-h)}- Y^{2(n-1-h)})\wedge T_{ h  } +A ( X^{2(n-1-k)}- Y^{2(n-1-k)})\wedge T_{k} ] .  $  Observe first that $\sum_{i=1}^{n-2}2i+1= n^2-2n  $ and so the degree of  $\mathcal T $  is 
 $n^2-2n -2(h+k)-2.$ The degree of $A$ is $n^2-2n+1$,  so that $i+j=   2h+2k+3.$ We may assume $h>k,\  i\geq j$.
 
If $i+2(n-1-h)<2n$ (and hence $j+2 (n-1-h)<2n$),  then  $-A ( X^{2(n-1-h)}- Y^{2(n-1-h)})\wedge T_{ h }$  is the difference of two monomials on which  $\rho$ attains the same value, so on this expression $\rho$ vanishes, same for $k$.

If $i+2(n-1-h)\geq 2n$, i.e., $i  \geq 2h+2$, and hence $j\leq 2k+1< 2h+1$, we have 
 $$ B=A Y^{2(n-1-h)} \wedge T_{ h  } +A  Y^{2(n-1-k)} \wedge T_{k}= A  \wedge T_{ h  }Y^{2(n-1-h)} +A \wedge T_{k}   Y^{2(n-1-k)} $$
 $$=(-1)^{i+j} (\mathcal T  \wedge T_{ h  }X^i Y^{2(n-1-h)+j} +  \mathcal T  \wedge T_{ k  }X^i Y^{2(n-1-k)+j} ). $$
When we place $ \mathcal T  \wedge T_{ h  }$ in the increasing order  we multiply by $(-1)^u$ where $u$ is the number of factors  of index $>h$. Since we have $n-2$  factors,  $(-1)^u= (-1)^{n-2-h} $ and the value of $\rho$ on the first term  is $ (-1)^{i+j}(-1)^u(-1)^{k+n}= (-1)^{i+j+h+k} $.  For the second term the number of terms we have to exchange is  the number of terms of index bigger than $k$  minus 1 so we get the sign $-(-1)^{i+j+h+k} $  and the two terms cancel.\smallskip

\smallskip

iii)\quad Assume    $\mathcal T $  misses only one element  $T_h$. In this case the degree of $\mathcal T$ is   $ n^2-2 n -2 h -1 $, thus $i+j=   2h +2.$ The two terms $A\, n(X^{2n-1}- Y^{2n-1}) $  are 0 unless either $i=0$ or $j=0$, since we are assuming $i\geq j$ this implies $j=0,i=2h+2$ and \begin{equation}
\label{pric}A\,n(X^{2n-1}- Y^{2n-1}) =-n\mathcal T X^{2h+2}Y^{2n-1} .
\end{equation}  The other contribution to the product is 
$-A ( X^{2(n-1-h)}- Y^{2(n-1-h)})\wedge T_{ h  } $.    

  If we have $2(n-1-h)+i< 2n$, then on this contribution $\rho$ vanishes. In this case $i\neq 2h+2$, so the contribution \eqref{pric} does not appear.

If   $2(n-1-h)+i\geq  2n$, i.e.  $i\geq 2h+2$,  we have   $i=2h+2,j=0$. The product $A\bar O_n$ is 0 unless $2h+2<2n$, in this case  it equals
\begin{equation}
\label{unc}- n\mathcal T X^{2h+2}Y^{2n-1}  +(-1)^{n-h}\mathcal S X^{2h+2}Y^{2(n-1-h)}, 
\end{equation} 
where as before  we have used 
$\mathcal T \wedge T_{ h  }=(-1)^{n-h} \mathcal S$. 
By definition the value of $\rho$ on $\mathcal T X^{2h+2}Y^{2n-1} $ is $ (-1)^{n+h}$,  as for $\mathcal S X^{2h+2}Y^{2(n-1-h)} $  we have $2n+2\geq 2$  and also $2(n-1-h)\geq 2$,  thus the value of $\rho$ on it equals $ n $.  The value  on the sum  is therefore
$-n(-1)^{n+h}  +(-1)^{n-h}n =0.$
\smallskip

iv)\quad Finally we consider the case in which $\mathcal T$ is the product of all the $T_i$'s.  In this case $i+j=1 $, and since we assume $i\geq j$ we   have $i=1,j=0$.
The only possible terms in the product are in $A\,n(X^{2n-1}- Y^{2n-1})=- n\mathcal TXY^{2n-1}  $. By definition $\rho$ is $0$  on this term.
\smallskip

We now want to prove that the image of $\pi_n$ coincides with the kernel of $\rho$. For this we have to show that  the image of $\pi_n$ has codimension 1. It is enough to show that adding a single vector to the image of $\pi_n$ we obtain the entire space. 
We  define $V$ to be the space spanned by $\mathcal S X^{2} Y^{2n-2}$ and $\im(\pi_n) $.  We want to show that $V= \tilde A_n[n^2] $. 
In the case iv) we have already seen that $\mathcal SXY^{2n-1},\mathcal SX^{2n-1}Y$ belong to the image of $\pi_n$. 
\smallskip

{\bf Claim 1.} {\em 
For every $h$ we have $\mathcal SX^{2h+1}Y^{2(n-h)-1}\in \im(\pi_n),\; \mathcal SX^{2h }Y^{2(n-h) }\in V.$}

To prove this claim,
consider $T_h$ of degree $2h+1$.  We may remove $T_h$ from $\mathcal S$ obtaining a product $\mathcal S^{(h)}$ and take the element $$A:=\mathcal S^{(h)} X^{2h+1} Y \in \tilde A_n[n^2-2n+1].$$
We have $$A\bar O_n=\pm \mathcal S  X^{2h+1} Y(X^{2(n-h-1)}-  Y^{2(n-h-1) }) =\pm (\mathcal S  X^{2n-1} Y  -  \mathcal S  X^{2h+1} Y^{2 n-2h-1  }).$$  Since $\mathcal S  X^{2n-1} Y \in  \im(\pi_n)$ we deduce $  \mathcal S  X^{2h+1} Y^{2 n-2h-1  } \in  \im(\pi_n) $.\smallskip

For the other case consider  $A:=\mathcal S^{(n-h)} X^{2} Y^{2(n-h)} \in \tilde A_n[n^2-2n+1]$. Then
$$A\bar O_n=\pm \mathcal S  X^{2} Y^{2(n-h)}(X^{2(h-1)}-  Y^{2(h-1) }) =\pm (\mathcal S  X^{2h} Y^{2(n-h)}  -  \mathcal S  X^{2} Y^{2 n-2}).$$  
Since $\mathcal S  X^{2} Y^{2n-2} \in  V$ we have $  \mathcal S  X^{2h} Y^{2 (n-h)} \in  V $.
\smallskip

{\bf Claim 2.} {\em 
If $\mathcal T  X^iY^j\in V$, also $\mathcal T  X^jY^i\in V$.}

By definition $\im(\pi_n)$  is invariant under the exchange of $X,Y$,  while $V$ is obtained from $\im(\pi_n)$ by adding $\mathcal S X^{2} Y^{2n-2}$, but by Claim 1 we also have $\mathcal S X^{2n-2}Y^{2} \in V$.  This proves Claim 2.
\smallskip

{\bf Claim 3.} {\em 
All monomials  $\mathcal TX^iY^j\in  \tilde A_n[n^2 ]$, where $\mathcal T$ misses one  element $T_h$,  are in $V$.
}

 We must have $i+j=2(n+h)+1$. 
Apply  \eqref{unc}   and Claim 1 to deduce that $\mathcal T X^{2h+2} Y^{2n-1}\in V$. By Claim 2  also  $\mathcal T X^{2n-1} Y^{2h+2}$ belongs to $V$.
 We may  assume $i\geq j$ by Claim 2. 
It  thus suffices to consider only the case $i>2h+2$. 
Note that in the case $h=n-2$, we have $i+j=4n-3$, thus $i=2n-1,j=2n-2$, so in this case the previous argument establishes the claim.

Consider now the case $h<n-2$. 
We first consider the case $i=2n-2$. (Note that the case $i=2n-1$ has been considered above.) Take $A:=\mathcal T^{(n-2)}X^{2n-2}Y^{2h+1}$, where $\mathcal T^{(n-2)}$  denotes the element
obtained from $\mathcal T$ by removing $T_{n-2}$. Then 
$$A\bar O_n=\pm \mathcal TX^{2n-2}Y^{2h+3}\pm S^{(n-2)}X^{2n-2}Y^{2n-1}.$$
As $S^{(n-2)}X^{2n-2}Y^{2n-1}$ has already been proven to belong to $V$, $TX^{2n-2}Y^{2h+3}\in V$. 
We now prove by the decreasing induction  that  $\mathcal T X^{i}Y^{2(n+h)+1-i}$ lies in $V$ for $ i>2h+2$.  
Take $A:=\mathcal T^{(n-2)} X^{i}Y^{2h+2n-i -1}\in  \tilde A_n[n^2-2n+1]$. 
We have
\begin{align*} A\bar O_n=&\mathcal T^{(n-2)} X^{i}Y^{2h+2n-i-1 } \bar O_n \\
 =&\mathcal T^{(n-2)} X^{i}Y^{2h+2n-i-1 } [-(X^2-Y^2)\wedge T_{n-2}-( X^{2(n-1-h)}- Y^{2(n-1-h)})\wedge T_{ h  }]\\
=&
\pm\mathcal T    ( X^{i+2}Y^{2h+2n-i-1 }- X^{i}Y^{2h+2 n -i+1})\in V\end{align*}
in case $i<2n-2$. 
Since by the induction hypothesis $\mathcal T X^{i+2}Y^{2h+2n-i-1 } \in V$, it follows that  $\mathcal T X^{i}Y^{2h+2n-i+1 }\in V$, and we have the desired result. 

\smallskip

{\bf Claim 4.} {\em 
All monomials  $\mathcal TX^iY^j\in  \tilde A_n[n^2 ]$, where $\mathcal T$ miss $m\geq  2$  elements $T_h$,  are in $V$.
}

Assume that $\mathcal T$ misses elements $T_{h_1},\dots,T_{h_m}$. Let us denote $s=\sum_{i=1}^m (2h_i+1)$. 
We first show that $\mathcal TX^{2n-1}Y^{s+1}\in \im(\pi_n)$,$\mathcal TX^{2n-2}Y^{s+2}\in \im(\pi_n)$. 
Since $m\geq 2$ and $s\leq 2(2n-1)-2n=2n-2$, 
$\mathcal T$ cannot miss $\mathcal T_{n-2}$. 
Denote by $\mathcal T^{(n-2)}$ the element
obtained from $\mathcal T$ by removing $T_{n-2}$. 
As all monomials in $\mathcal T^{(n-2)}\bar O_n$ miss at least two elements $T_{k_1}$, $T_{k_2}$, they cannot miss $T_{n-2}$ by the previous argument, thus we have $\mathcal T^{(n-2)}X^{2n-1}Y^{s-1}\bar O_n=\pm \mathcal TX^{2n-1}Y^{s+1}$.
The same argument shows that $\mathcal TX^{2n-2}Y^{s+2}\in \im(\pi_n)$. 
Arguing by the decreasing induction we may assume that $\mathcal TX^{2n-k+2}Y^{s+k-2}\in \im(\pi_n)$. 
We have 
$$\mathcal T^{(n-2)}X^{2n-k}Y^{s+k-2}\bar O_n = \pm(\mathcal TX^{2n-k+2}Y^{s+k-2}-\mathcal TX^{2n-k}Y^{s+k})\in \im(\pi_n).$$
By the induction hypothesis $\mathcal TX^{2n-k+2}Y^{s+k-2}\in \im(\pi_n)$ and thus $\mathcal TX^{2n-k}Y^{s+k}\in \im(\pi_n)$.
\end{proof}

\begin{proof}[Proof of Theorem \ref{main}]
Note that $\mathcal SX^2Y^{2n-2}$, which is not in the image of $\pi_n$ by Proposition \ref{kerim}, is a generator of the 1-dimensional space  $(N_n\otimes\bigwedge^{n^2-2}N_n^* X^2)^G\subset(N_n\otimes \mathbb F_n[n^2])^G$. The decomposition of $A_n[n^2]$ from  Proposition \ref{kerim} thus induces the decomposition $\mathbb G_n[n^2]= \mathbb G_n[n^2]_{CH}\oplus \bigwedge^{n^2-2}N_n^* X^2.$
\end{proof}

\section{Quasi-identities that follow from the Cayley-Hamilton identity} \label{sec4}
In this final section we collect several  results on quasi-identities of matrices and the Cayley-Hamilton identity, and give a positive solution for the Specht problem for quasi-identities of matrices.

\subsection{Quasi-identities and local linear dependence}
\label{sec5}

Let $\mathfrak R$ be an $F$-algebra. Noncommutative polynomials $f_1,\ldots,f_t\in F\langle x_1,\ldots,x_m\rangle$ are said to be {\em $\mathfrak R$-locally linearly dependent} if the elements $f_1(r_1,\ldots,r_m),\ldots, f_t(r_1,\ldots,r_m)$ are linearly dependent in $\mathfrak R$ for all $r_1,\ldots,r_m\in \mathfrak R$.
This concept has actually appeared in Operator Theory  \cite{hel}, and was recently studied from the algebraic point of view in \cite{BK}. We will see that it can be used in the study of quasi-identities.

Recall that $C_m$ stands for the Capelli  polynomial.  The following well-known result 
(see, e.g., \cite[Theorem 7.6.16]{Row}) was used in \cite{BK} as a basic tool.

\begin{theorem}\label{Raz}  Let $\mathfrak R$ be a prime algebra. Then $a_1,\ldots,a_t\in\mathfrak R$ are linearly dependent over the extended centroid of $\mathfrak R$
if and only if  $C_{2t-1}(a_1,...,a_t,r_{1},...,r_{t-1})=0$ for all $r_1,\ldots,r_{t-1}\in\mathfrak R$.
\end{theorem}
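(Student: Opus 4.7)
My plan is to deduce the theorem from standard facts about generalized polynomial identities (GPIs) in prime rings, in particular \emph{Martindale's lemma}: if $\sum_{i=1}^{n} p_i\, x\, q_i = 0$ for every $x\in\mathfrak R$ with $q_1,\ldots,q_n$ linearly independent over the extended centroid $C$, then each $p_i=0$. Let $Q_s(\mathfrak R)$ denote the symmetric Martindale quotient ring, which contains $\mathfrak R$ and whose center contains $C$.

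For the direction ``linear dependence $\Rightarrow$ Capelli vanishes'', suppose $\sum_{i=1}^t c_i a_i = 0$ with $c_i\in C$ not all zero, so some $a_j$ is a $C$-linear combination of the remaining $a_i$'s. Since each $c_i$ commutes with every element of $\mathfrak R$ inside $Q_s$, substituting this expression into $C_{2t-1}(a_1,\ldots,a_t,r_1,\ldots,r_{t-1})$ and using multilinearity pulls every $c_i$ to the front. Each resulting term is a Capelli evaluation with two equal arguments among the first $t$ slots, hence vanishes by alternation. Thus the evaluation is $0$ in $Q_s$, and therefore in $\mathfrak R$.

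For the reverse direction I would induct on $t$. The base case $t=1$ is immediate since $C_1(a_1)=a_1$. For $t\ge 2$, assume the hypothesis and suppose for contradiction that $a_1,\ldots,a_t$ are $C$-linearly independent. Collecting the terms of the Capelli polynomial according to which variable ends up in the last slot yields the ``Laplace-type'' expansion
\[
C_{2t-1}(a_1,\ldots,a_t,r_1,\ldots,r_{t-2},y) \;=\; \sum_{j=1}^{t} (-1)^{t-j}\, \widehat C_j\cdot y \cdot a_j,
\]
where $\widehat C_j := C_{2t-3}(a_1,\ldots,\widehat{a}_j,\ldots,a_t,\, r_1,\ldots,r_{t-2})$. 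By hypothesis the left side vanishes for every $y\in\mathfrak R$, so Martindale's lemma applied to the GPI on the right, together with the assumed $C$-independence of $a_1,\ldots,a_t$, forces every $\widehat C_j$ to vanish for every choice of the $r_i$'s. The inductive hypothesis then makes each omitted subfamily $a_1,\ldots,\widehat a_j,\ldots,a_t$ $C$-linearly dependent; in particular, for $j=t$ we obtain a $C$-dependence among $a_1,\ldots,a_{t-1}$, contradicting the standing assumption.

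The main technical obstacle is the appeal to Martindale's lemma, which rests on the construction of the symmetric Martindale quotient ring and the identification of $C$ with its center. Since the theorem is flagged as well-known (cf.\ \cite[Theorem 7.6.16]{Row}), the expected presentation is to invoke this GPI machinery rather than reprove it; the only genuinely original steps are the Laplace-type expansion of the Capelli polynomial and the induction, both of which are routine once the key lemma is in hand.
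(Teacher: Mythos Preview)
The paper does not supply a proof of this theorem at all: it is quoted as a ``well-known result'' with a reference to \cite[Theorem~7.6.16]{Row} and is then used as a black box. So there is nothing in the paper to compare your argument against.

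That said, your argument is correct and is essentially the standard proof one finds in the GPI literature. The Laplace-type expansion
\[
C_{2t-1}(a_1,\ldots,a_t,r_1,\ldots,r_{t-2},y)=\sum_{j=1}^{t}(-1)^{t-j}\,\widehat C_j\,y\,a_j
\]
is exactly right (group the permutations by $\sigma(t)=j$ and track the sign), and for each fixed choice of $r_1,\ldots,r_{t-2}$ the vanishing of this expression for all $y\in\mathfrak R$ together with the assumed $C$-independence of the $a_j$ is precisely the hypothesis of Martindale's lemma, forcing every $\widehat C_j=0$. The induction then closes as you describe. The easy direction is also fine: centrality of the $c_i$ in $Q_s(\mathfrak R)$ lets you pull them outside, and alternation in the first $t$ slots kills every term. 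Your closing remark is accurate: the only substantive input beyond multilinear bookkeeping is Martindale's lemma, which the cited references take for granted.
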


By using a similar approach as in the proof of \cite[Theorem 3.1]{BK}, just by applying Theorem \ref{Raz} to the algebra of generic matrices instead of to the free algebra $F\langle X \rangle$, we get the following characterization of $M_n$-local linear dependence through the central polynomials (cf. \S \ref{cepo}).  \begin{theorem}\label{loc}
Noncommutative polynomials  $f_1,\ldots,f_t$ are $M_n$-locally linearly dependent if and only if there exist central polynomials 
$c_1,\dots,c_t$, not all polynomial identities, such that $\sum_{i=1}^t c_i f_i$ is a polynomial identity of $M_n$.
\end{theorem}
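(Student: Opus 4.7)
The plan is to reduce the statement to Theorem \ref{Raz} applied to the prime algebra of generic matrices $F\langle \xi_k\rangle$, in analogy with \cite[Theorem 3.1]{BK}, where the same strategy is used over the free algebra. The key observations are that $F\langle \xi_k\rangle$ is a prime algebra (it sits inside $M_n(\mathcal{C})$ with $\mathcal{C}$ a polynomial ring, hence is a domain), whose center is $\mathcal{Z}_n$ and whose extended centroid is the field of fractions of $\mathcal{Z}_n$ (cf.\ \S\ref{cepo}), and that central polynomials of $M_n$ are exactly the polynomials whose image under the evaluation $x_k\mapsto \xi_k$ lies in $\mathcal{Z}_n$.

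For the \emph{if} direction, suppose $c_1,\dots,c_t$ are central polynomials, not all polynomial identities of $M_n$, such that $\sum_i c_if_i\in \mathrm{id}(M_n)$. For any evaluation $r=(r_1,\dots,r_m)\in M_n^m$ the scalars $c_i(r)$ satisfy $\sum_i c_i(r)f_i(r)=0$ in $M_n$. On the Zariski open set where some $c_i(r)\neq 0$ this yields a nontrivial linear relation among the $f_i(r)$; since this open set is nonempty (because not all $c_i$ are identities) hence dense, and the locus where $f_1(r),\dots,f_t(r)$ are linearly dependent is Zariski closed (cut out by the vanishing of all $t\times t$ minors), we conclude linear dependence on all of $M_n^m$.

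For the \emph{only if} direction, I would argue as follows. Assume the $f_i$ are $M_n$-locally linearly dependent, and consider any noncommutative polynomials $g_1,\dots,g_{t-1}\in F\langle X\rangle$. For every $r\in M_n^m$, the elements $f_1(r),\dots,f_t(r)$ are linearly dependent in the prime algebra $M_n$, whose extended centroid is $F$, so Theorem \ref{Raz} applied to $M_n$ gives
\[
C_{2t-1}\bigl(f_1(r),\dots,f_t(r),g_1(r),\dots,g_{t-1}(r)\bigr)=0.
\]
Thus $C_{2t-1}(f_1,\dots,f_t,g_1,\dots,g_{t-1})$ is a polynomial identity of $M_n$, so it vanishes on the generic matrices. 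Since $g_1,\dots,g_{t-1}$ are arbitrary, this means exactly that $C_{2t-1}(f_1(\xi),\dots,f_t(\xi),u_1,\dots,u_{t-1})=0$ for all $u_j\in F\langle \xi_k\rangle$. Applying Theorem \ref{Raz} now to the prime algebra $F\langle \xi_k\rangle$ yields that $f_1(\xi),\dots,f_t(\xi)$ are linearly dependent over its extended centroid, i.e.\ over the field of fractions of $\mathcal{Z}_n$.

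Clearing denominators, I obtain $z_1,\dots,z_t\in \mathcal{Z}_n$, not all zero, with $\sum_i z_if_i(\xi)=0$ in $F\langle \xi_k\rangle$. Lifting each $z_i$ to a central polynomial $c_i\in F\langle X\rangle$ (possible by the definition of $\mathcal{Z}_n$ as the image of the central polynomials) gives central polynomials, not all polynomial identities of $M_n$, such that $\sum_i c_if_i$ vanishes on all generic matrices; as $F$ is infinite this is equivalent to $\sum_i c_if_i\in \mathrm{id}(M_n)$. The only nontrivial step is the appeal to Theorem \ref{Raz} in the generic-matrix setting; the main thing to verify carefully is the identification of the extended centroid of $F\langle \xi_k\rangle$ with the fraction field of $\mathcal{Z}_n$, which is a standard consequence of the Posner--Rowen theorem for prime PI-algebras.
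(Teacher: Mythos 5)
Your proof follows essentially the same route as the paper's: apply Theorem \ref{Raz} first in $M_n$ to recast local linear dependence as the Capelli polynomial $H=C_{2t-1}(f_1,\dots,f_t,y_1,\dots,y_{t-1})$ being an identity of $M_n$, transfer this to the generic matrix algebra $F\langle\xi_k\rangle$ since the two satisfy the same polynomial identities, apply Theorem \ref{Raz} again in the prime PI-algebra $F\langle\xi_k\rangle$, and identify its extended centroid with the fraction field of $\mathcal{Z}_n$ via Posner--Rowen. One small slip: your parenthetical justification that $F\langle\xi_k\rangle$ is a domain ``since it sits inside $M_n(\mathcal{C})$'' is a non-sequitur, as $M_n(\mathcal{C})$ has zero divisors; the fact that the generic matrix algebra is a domain (hence prime) is a genuine theorem of Amitsur and is not inherited from the ambient matrix ring -- though the conclusion you need is correct and the paper likewise asserts primality without proof.
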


\begin{proof}
By Theorem \ref{Raz},
the condition that $f_1,\dots,f_t$ are $M_n$-locally linearly dependent is equivalent to the condition that  
$$H:=C_{2t-1}(f_1,\dots,f_t,y_1,\dots,y_{t-1})$$ 
is a polynomial identity of $M_n$.
Since $M_n$ and the algebra $F\langle \xi_k \rangle $ of $n\times n$ generic matrices satisfy the same polynomial identities, this is the same as saying that $H$ is a polynomial identity of $F\langle \xi_k \rangle $.
Using Theorem \ref{Raz} once again we see that this is further equivalent to the condition that
$f_1,\ldots,f_t$, viewed as elements of $F\langle \xi_k \rangle $, are linearly dependent
over the extended centroid of $F\langle \xi_k \rangle $. Since $F\langle \xi_k \rangle $ is a prime PI-algebra, its extended centroid is the field of fractions of the center of $F\langle \xi_k \rangle $; the latter can be identified with central polynomials, and hence the desired conclusion follows.  
\end{proof}

\begin{corollary}\label{din1}
If noncommutative polynomials  $f_0,f_1,\ldots,f_t$ are $M_n$-locally linearly dependent, while $f_1,\ldots,f_t$ are $M_n$-locally linearly independent, then there exist central polynomials 
$c_0,c_1,\dots,c_t$,  such that $c_0$ is nontrivial and  $\sum_{i=0}^t c_i f_i$ is a polynomial identity of $M_n$.
\end{corollary}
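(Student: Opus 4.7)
The plan is to deduce Corollary \ref{din1} directly from Theorem \ref{loc} by a short argument by contradiction.

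First, I would apply Theorem \ref{loc} to the tuple $f_0, f_1, \ldots, f_t$, which is assumed $M_n$-locally linearly dependent. This yields central polynomials $c_0, c_1, \ldots, c_t$, not all polynomial identities of $M_n$, such that
\[
\sum_{i=0}^t c_i f_i
\]
is a polynomial identity of $M_n$. The goal is to show that $c_0$ can be taken to be nontrivial, i.e., not a polynomial identity.

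Next, suppose for contradiction that $c_0$ is a polynomial identity. Then $c_0 f_0$ is a polynomial identity as well (any evaluation produces a scalar matrix equal to $0$ times an element), so subtracting it gives that
\[
\sum_{i=1}^t c_i f_i
\]
is also a polynomial identity of $M_n$. Since not all of $c_0, c_1, \ldots, c_t$ are polynomial identities but $c_0$ is, at least one of $c_1, \ldots, c_t$ must be nontrivial. Applying the ``if'' direction of Theorem \ref{loc} to $f_1, \ldots, f_t$ with these central polynomials then forces $f_1, \ldots, f_t$ to be $M_n$-locally linearly dependent, contradicting the hypothesis.

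There is essentially no obstacle here: the statement is a routine refinement of Theorem \ref{loc}, and the only point requiring attention is the elementary observation that a central polynomial which is itself a polynomial identity gives a trivial contribution in any linear combination, so its appearance can be absorbed into the remaining terms to reduce to the smaller tuple.
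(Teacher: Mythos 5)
Your proof is correct. The paper gives no explicit proof of Corollary \ref{din1} (it is presented as an immediate consequence of Theorem \ref{loc}), and your argument supplies precisely the intended reasoning: start from Theorem \ref{loc} applied to $f_0,\ldots,f_t$, and observe that if $c_0$ were a polynomial identity, then $c_0 f_0$ would vanish identically, leaving $\sum_{i=1}^t c_i f_i$ a polynomial identity with $c_1,\ldots,c_t$ not all trivial, so the converse direction of Theorem \ref{loc} would force $f_1,\ldots,f_t$ to be $M_n$-locally linearly dependent, contradicting the hypothesis. This is the natural and essentially unique short deduction, and there are no gaps.
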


Later, in  Remark \ref{rem55}, we will show that this result can be used to obtain an alternative proof of a  somewhat weaker version of  Theorem \ref{cp2}. 

\begin{lemma}\label{novanova}
If a quasi-polynomial $\sum_{i=1}^t \lambda_i M_i$ is a quasi-identity of $M_n$, then either each $\lambda_i =0$ or  $M_1,\ldots,M_t$ are $M_n$-locally linearly dependent (and hence satisfy the conclusion of Theorem \ref{loc}).
\end{lemma}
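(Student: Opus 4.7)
My plan is to argue by contraposition: assuming $M_1,\dots,M_t$ are $M_n$-locally linearly \emph{independent} in the sense that there is at least one tuple $A=(A_1,\dots,A_m)\in M_n^m$ at which $M_1(A),\dots,M_t(A)$ are linearly independent in $M_n$, I would show that every $\lambda_i$ must vanish identically. The point is that quasi-identities impose scalar relations entry-by-entry, and these entry relations are polynomial conditions in the commutative variables $x_{ij}^{(k)}$, so a Zariski density argument applies.

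First, I would assemble the $M_i$'s in the generic matrix picture. Evaluating each $M_i$ at the generic matrices $\xi_k=(x_{ij}^{(k)})$ produces an element $M_i(\xi_1,\dots,\xi_m)\in M_n(\mathcal{C})$; listing its $n^2$ entries as a row gives a $t\times n^2$ matrix $\Xi$ with entries in $\mathcal{C}$. By hypothesis there exists $A$ with $M_1(A),\dots,M_t(A)$ linearly independent in $M_n(F)\cong F^{n^2}$, which means that after specializing $x_{ij}^{(k)}\mapsto a_{ij}^{(k)}$ the matrix $\Xi$ acquires rank $t$, and therefore some fixed $t\times t$ minor $\Delta\in\mathcal{C}$ does not vanish at $A$. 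In particular $\Delta$ is a nonzero element of $\mathcal{C}$.

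Now let $U\subset F^{mn^2}$ be the Zariski open set where $\Delta\ne 0$; it is nonempty and, since $F$ is infinite (characteristic zero), Zariski dense. For any point $B=(B_1,\dots,B_m)\in U$ the matrices $M_1(B),\dots,M_t(B)$ span a $t$-dimensional subspace of $M_n$, so the quasi-identity relation $\sum_i\lambda_i(B)M_i(B)=0$ forces $\lambda_i(B)=0$ for each $i$. Thus each polynomial $\lambda_i\in\mathcal{C}$ vanishes on a Zariski-dense subset of $F^{mn^2}$, and hence $\lambda_i=0$ identically. This is the desired conclusion.

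The argument is essentially a one-step Zariski density / ``nonvanishing minor" observation, and the only thing to be careful about is translating ``locally linearly independent in $M_n$" into ``the matrix $\Xi$ over $\mathcal{C}$ has generic rank $t$"; there is no real obstacle, since both conditions precisely say that at some $F$-point the rank is $t$, and that rank-drop is a closed polynomial condition on the coefficient ring $\mathcal{C}$. The combination of Theorem~\ref{loc} with this lemma then explains the phrase ``and hence satisfy the conclusion of Theorem~\ref{loc}" automatically.
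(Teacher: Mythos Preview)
Your proof is correct and follows essentially the same Zariski density argument as the paper: both show that the locus where $M_1,\dots,M_t$ are linearly independent is a nonempty Zariski open set on which the $\lambda_i$ must vanish, hence vanish identically. The only cosmetic difference is that you exhibit closedness of the dependence locus via a nonvanishing $t\times t$ minor of the coordinate matrix~$\Xi$, whereas the paper invokes Theorem~\ref{Raz} (the Capelli polynomial criterion) for the same purpose.
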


\begin{proof}
We may assume that $\lambda_i=\lambda_i(x_1,\ldots,x_m)$ and $M_i = M_i(x_1,\ldots,x_m)$. The set  $W$ of all $m$-tuples 
 $(A_1,\dots,A_{m})\in M_n^{m}$ such that
 $\lambda_i(A_1,\dots,A_{m}) =0$ for  all  $ i=1,\ldots,  t$ is closed in the Zariski topology of $F^{n^2m}$.
 Similarly, the
 set $Z$ of  all $m$-tuples $(A_1,\dots,A_m)\in M_n^{m}$ such that $M_1(A_1,\ldots,A_{m}),\ldots,M_t(A_1,\ldots,A_{m})$  are linearly dependent is also closed - namely, the linear dependence can be expressed through zeros of a polynomial by Theorem \ref{Raz}. If $Z = M_n^m$, then  $M_1,\ldots,M_t$ are $M_n$-locally linearly dependent. Assume therefore that $Z \ne M_n^m$.
 Suppose that $W\ne M_n^{m}$. Then,
 since $F^{n^2m}$ is irreducible (as char$(F)=0$), the complements of $W$ and $Z$ in $M_n^{m}$  have a nonempty intersection. 
This  means that  
  there exist $A_1,\ldots,A_{m}\in M_n^m$ such that  $\lambda_i(A_1,\dots,A_{m}) \ne 0$ for some $i$  and $M_1(A_1,\ldots,A_{m}),\ldots,M_t(A_1,\ldots,A_{m})$ are linearly independent. However, this is impossible since  $\sum_{i=1}^t \lambda_i M_i$ is a quasi-identity.  Thus, $W=M_n^{m}$, i.e., each $\lambda_i=0$.
\end{proof}
We conclude this  subsection with a theorem giving a condition under which a quasi-identity is 
 a consequence of the Cayley-Hamilton identity.

\begin{theorem}\label{the56}
Let 
$P= \lambda_0 M_0  + \sum_{i=1}^t \lambda_i M_i \in \mathfrak{I}_n$.  If $M_1,\ldots,M_t$ are $M_n$-locally linearly independent, then $P$ is a consequence  of the Cayley-Hamilton identity.
\end{theorem}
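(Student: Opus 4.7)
The plan is to construct $\mu \in \mathcal{C}$ and a specific $N' \in (Q_n)$ with $P = \mu N'$. If all $\lambda_i$ vanish then $P = 0 \in (Q_n)$ trivially, so assume otherwise. Lemma \ref{novanova} applied to $P \in \mathfrak{I}_n$ forces $M_0, M_1, \ldots, M_t$ to be $M_n$-locally linearly dependent; combined with the LLI hypothesis on $M_1, \ldots, M_t$, Corollary \ref{din1} supplies central polynomials $c_0, c_1, \ldots, c_t$, with $c_0$ not a polynomial identity, such that $N := c_0 M_0 + \sum_{i=1}^t c_i M_i$ is a polynomial identity of $M_n$; in particular $N \in (Q_n)$.

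The next step is to derive the scalar relation $c_0(\eta)\lambda_i = \lambda_0 c_i(\eta)$ in $\mathcal{C}$ for $i = 1, \ldots, t$. The contrapositive of Lemma \ref{novanova} shows that LLI of $M_1, \ldots, M_t$ is equivalent to $\eta_{M_1}, \ldots, \eta_{M_t}$ being linearly independent over $\mathcal{C}$ in $M_n(\mathcal{C})$. The equations $\lambda_0\eta_{M_0} + \sum_{i=1}^t \lambda_i \eta_{M_i} = 0$ (from $P$ being a quasi-identity) and $c_0(\eta)\eta_{M_0} + \sum_{i=1}^t c_i(\eta)\eta_{M_i} = 0$ (from $N$ being a polynomial identity) hold in $M_n(\mathcal{C})$; multiplying the first by $c_0(\eta)$, the second by $\lambda_0$, and subtracting cancels the $\eta_{M_0}$-terms, leaving $\sum_{i=1}^t \bigl(c_0(\eta)\lambda_i - \lambda_0 c_i(\eta)\bigr)\eta_{M_i} = 0$. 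The $\mathcal{C}$-linear independence then forces $c_0(\eta)\lambda_i = \lambda_0 c_i(\eta)$ in $\mathcal{C}$ for each $i$.

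I would next strip out common factors. Let $d = \gcd_{\mathcal{C}}(c_0(\eta), c_1(\eta), \ldots, c_t(\eta))$ in the UFD $\mathcal{C}$. For any $\sigma \in G := PGL_n$, the polynomial $\sigma(d)$ divides every $c_i(\eta)$ (since the $c_i(\eta)$ are $G$-invariant), so $\sigma(d) = \alpha_\sigma d$ for some $\alpha_\sigma \in F^\times$; since $PGL_n$ admits no nontrivial algebraic characters, $\alpha_\sigma \equiv 1$ and hence $d \in \mathcal{C}^G = \T_n$. Writing $c_i(\eta) = d \cdot c_i'$ with $c_i' \in \T_n$, define $N' := \sum_{i=0}^t c_i' M_i \in \mathcal{C}\langle X \rangle$. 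Since $d \cdot N'(\eta) = N(\eta) = 0$ and $M_n(\mathcal{C})$ is torsion-free over the domain $\mathcal{C}$, we get $N'(\eta) = 0$, so $N'$ is a quasi-identity. Because each $c_i' \in \T_n$ moves freely across the $\T_n$-tensor in $\mathcal{C} \otimes_{\T_n} \mathcal{T}_n\langle \xi_k \rangle$, one has $j(N') = 1 \otimes \sum_{i=0}^t c_i' \xi_{M_i}$; the parallel identity $d \cdot \sum c_i' \xi_{M_i} = N(\xi) = 0$ in $M_n(\mathcal{C}_y)$ and the analogous torsion-freeness give $\sum c_i' \xi_{M_i} = 0$, so $j(N') = 0$ and $N' \in (Q_n)$ by Proposition \ref{ls}.

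Finally, a divisibility argument finishes. Dividing the key relation by $d$ yields $c_0' \lambda_i = \lambda_0 c_i'$ in $\mathcal{C}$ with $\gcd(c_0', c_1', \ldots, c_t') = 1$. For any prime $p \mid c_0'$ in $\mathcal{C}$, coprimality produces some $i \in \{1, \ldots, t\}$ with $p \nmid c_i'$, and then $p^{v_p(c_0')} \mid \lambda_0 c_i'$ forces $p^{v_p(c_0')} \mid \lambda_0$; hence $c_0' \mid \lambda_0$, so $\lambda_0 = \mu c_0'$ for some $\mu \in \mathcal{C}$. Substituting back, the identity $c_0' \lambda_i = \lambda_0 c_i' = \mu c_0' c_i'$ and cancellation of $c_0'$ (a nonzero element of the domain $\mathcal{C}$) give $\lambda_i = \mu c_i'$ for every $i$; consequently $P = \sum \lambda_i M_i = \mu N' \in (Q_n)$. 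The main technical hurdles will be confirming $d \in \T_n$ via the character-free structure of $PGL_n$ and verifying that $N'$ (after the common factor is stripped) remains in $(Q_n)$; the remainder is elementary unique factorization in the polynomial ring $\mathcal{C}$.
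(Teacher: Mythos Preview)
Your argument is correct and follows the same overall strategy as the paper: obtain the relations $c_0\lambda_i = c_i\lambda_0$ via Lemma~\ref{novanova}, then use unique factorization in $\mathcal{C}$ together with the fact that $PGL_n$ has no nontrivial characters to write $P$ as an element of $\mathcal{C}$ times a trace identity, which lies in $(Q_n)$.

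The factorization step is organized differently. The paper decomposes each $\lambda_i$ as $\lambda_i^G\lambda_i'$, the product of its $G$-invariant irreducible factors times the rest, and from $c_i\lambda_0^G\lambda_0'=c_0\lambda_i^G\lambda_i'$ reads off $\lambda_0'=\lambda_i'$ (implicitly using that every irreducible factor of a $G$-invariant element of $\mathcal{C}$ is itself $G$-invariant, i.e.\ exactly your character argument). You instead strip the gcd from the $c_i$-side, prove $d\in\T_n$, and deduce $c_0'\mid\lambda_0$ from coprimality; this is a cleaner and more standard UFD maneuver and makes the role of the character-triviality of $PGL_n$ explicit, whereas in the paper it is hidden. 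Your detour through $j$ and Proposition~\ref{ls} to show $N'\in(Q_n)$ is correct but unnecessary: since each $c_i'\in\T_n$, the element $N'=\sum c_i'M_i$ is already a trace polynomial, and once you know it is a quasi-identity it is a trace identity, so the SFT gives $N'\in(Q_n)$ directly.
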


\begin{proof} We may assume that some $\lambda_i\ne 0$, and so $M_0,M_1,\ldots,M_t$  are $M_n$-locally linearly dependent by Lemma \ref{novanova}.  Theorem \ref{loc} tells us that there exist central polynomials $c_0,c_i$, not all trivial, such that
$ c_0M_0 + \sum_{i=1}^t c_i M_i$ is a polynomial identity. Multiplying this identity with $\lambda_0$ and then comparing it with the quasi-identity $c_0 P$ it follows that $\sum_{i=1}^t (c_0\lambda_i - c_i\lambda_0)M_i  \in \mathfrak{I}_n$.   Lemma \ref{novanova} implies that $c_0\lambda_i = c_i\lambda_0$ for every $i$.  Let us write $\lambda_i=\lambda_i^G\lambda_i'$ where $\lambda_i^G$ is the product of all irreducible
factors of $\lambda_i$ that are invariant under the conjugation by $G=GL_n(F)$, and $\lambda_i'$ is the
product of the remaining irreducible factors of $\lambda_i$. Hence $c_i\lambda_0^G\lambda_0'=c_0\lambda_i^G\lambda_i'$ and therefore $\lambda_0'=\lambda_i'$ for every $i$. We
thus have $$P=\lambda_0'\bigl(\lambda_0^G M+\sum_{i=1}^t \lambda_i^G M_i\bigr).$$ Since $\lambda_i^G$ are
invariant under $G$, they are trace polynomials. Therefore the desired conclusion follows from the SFT.
\end{proof}

\begin{remark}\label{rem55}
Theorem \ref{cp2} in particular tells us that for  every quasi-identity $P$  of $M_n$ there exists a nontrivial central polynomial $c$ with zero constant term  such that $cP$ is a consequence  of the Cayley-Hamilton identity. Let us give an alternative proof of that, based on local linear dependence and the SFT.

We first remark that the condition that $c\in F\langle X \rangle$ is a central polynomial can be expressed as that there exists $\alpha_c\in \mathcal C$ such that $c - \alpha_c\in \mathfrak{I}_n$. Actually,  $c - \alpha_c$ is a trace identity since $\alpha_c = \frac{1}{n}{\rm tr}(c)$. Therefore the SFT  implies that 
for every central polynomial c of $M_n$ there exists $\alpha_c\in\mathcal C$ such that $c - \alpha_c$ is a quasi-identity of $M_n$ contained in the T-ideal of $ \mathcal{C}\bigl\langle X \bigr\rangle $ generated by $Q_n$.

Now take an arbitary $P\in\mathfrak{I}_n$, and let us prove that $c$ with the aforementioned property exists. 
Write 
$
P = \sum_{i=1}^{n^2} \lambda_{i} x_i+ \sum \lambda_M M
$
where each $M$ in the second summation is different from $x_1,\ldots,x_{n^2}$. We proceed by induction on the number of summands $d$ in the second summation. 
If $d=0$, then $P=0$ by Lemma \ref{novanova} (since $x_1,\ldots,x_{n^2}$ are $M_n$-locally linearly independent).
Let $d > 0$. Pick $M_0$ such that $M_0\notin \{x_1,\ldots,x_{n^2}\}$ and $\lambda_{M_0}\ne 0$. Note that $M_0,x_1,\ldots,x_{n^2}$ are $M_n$-locally linearly dependent, while $x_1,\ldots,x_{n^2}$ are $M_n$-locally linearly independent. Thus, by Corollary \ref{din1} there exist central polynomials $c_0,c_1,\ldots,c_{n^2}$ such that $c_0$ is nontrivial and 
$f:=c_0M_0 + \sum_{i=1}^{n^2} c_ix_i$ is an identity of $M_n$. Let $\alpha_i \in\mathcal C$, 
$i=0,1,\ldots,n^2$, be such that $c_i - \alpha_i$ is a quasi-identity lying in the T-ideal  generated by $Q_n$.
Let us define
$P':=\alpha_0 P-\lambda_{M_0} \alpha_0 M_0 -\lambda_{M_0}\sum_{i=1}^{n^2} \alpha_ix_i.$
Writing each $\alpha_i$ as $c_i - (c_i - \alpha_i)$ we see that $P'$ is a quasi-identity. Note that $P'$ involves $d-1$ summands not lying in $\mathcal C x_i$, $i=1,\ldots,n^2$. Therefore the induction assumption yields the existence of a nonzero central polynomial $c'$ such that $c'P'$ lies in the T-ideal generated by $Q_n$. Setting $c= c_0c'$  we thus have $c\ne 0$ and
\begin{align*}
cP =& (c_0 - \alpha_0)c'P + \alpha_0c'P
 = (c_0 - \alpha_0)c'P + c'P' + \lambda_{M_0}c'\bigl(\alpha_0 M_0 + \sum_{i=1}^{n^2} \alpha_ix_i\bigr)\\
 =& (c_0 - \alpha_0)c'P + c'P' - \lambda_{M_0}c'\bigl((c_0-\alpha_0) M_0 + \sum_{i=1}^{n^2} (c_i-\alpha_i)x_i\bigr) + \lambda_{M_0}c'f.
\end{align*}
The T-ideal generated by $Q_n$ contains $c_i-\alpha_i$, $0\le i\le n^2$, $c'P'$, as well as $f$ according to the SFT. Hence it also contains $cP$. 
\end{remark}

\subsection{Some special cases} The purpose of this subsection is to examine several simple situations, which should in particular serve as an evidence of the delicacy of the problem of finding quasi-identities that are not a consequence of the Cayley-Hamilton identity. 

We begin with quasi-polynomials $p=p(x)$ of one variable, i.e., 
$
p(x) = \sum_{i=0}^m \lambda_i(x)x^i$. Here we could  refer to results on more general functional identities of one variable in \cite{BS}, but  an independent treatment is very simple.


\begin{proposition}\label{oneind}
If a quasi-polynomial of one variable $p(x)$ is a quasi-identity of $M_n$, then
there exists a quasi-polynomial $r(x)$ such that $p(x) =r(x) q_n(x)$.
\end{proposition}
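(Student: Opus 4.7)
The key observation is that when there is only a single variable $x$, the free quasi-polynomial algebra $\mathcal{C}\langle x\rangle$ coincides with the commutative polynomial ring $\mathcal{C}[x]$, so we can do ordinary Euclidean division. Since the Cayley--Hamilton quasi-polynomial $q_n(x)=x^n+\tau_1(x)x^{n-1}+\cdots+\tau_n(x)$ is monic of degree $n$ in $x$ with coefficients $\tau_i(x)\in\mathcal{C}$, we can divide $p(x)$ by $q_n(x)$ inside $\mathcal{C}[x]$ and obtain
\begin{equation*}
p(x)=r(x)\,q_n(x)+s(x), \qquad r(x), s(x)\in\mathcal{C}[x], \qquad \deg_x s(x)<n.
\end{equation*}
My plan is to show $s(x)=0$, which will immediately give the desired conclusion.

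Since by the Cayley--Hamilton theorem $q_n(A)=0$ for every $A\in M_n$, and $p$ is a quasi-identity by hypothesis, evaluation at any $A\in M_n$ yields $s(A)=p(A)-r(A)q_n(A)=0$. Thus $s$ is itself a quasi-identity of $M_n$. Writing $s(x)=\sum_{i=0}^{n-1}\mu_i(x)x^i$ with $\mu_i\in\mathcal{C}$, it remains to prove that each $\mu_i$ vanishes.

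For this I will invoke Lemma \ref{novanova} applied to the monomials $M_i=x^i$, $i=0,1,\ldots,n-1$. The hypothesis I need to check is that $1,x,x^2,\ldots,x^{n-1}$ are $M_n$-locally linearly \emph{independent}. This is immediate: for any matrix $A\in M_n$ with $n$ distinct eigenvalues (for instance $A=\mathrm{diag}(1,2,\ldots,n)$), the minimal polynomial of $A$ has degree $n$, so $I,A,A^2,\ldots,A^{n-1}$ are linearly independent in $M_n$. Hence by Lemma \ref{novanova} all $\mu_i$ must vanish, so $s(x)=0$ and $p(x)=r(x)q_n(x)$, as desired.

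There is no serious obstacle here; the proof is essentially a reduction to polynomial division together with the observation that the powers $1,x,\ldots,x^{n-1}$ are locally linearly independent. Alternatively, one could argue directly without invoking Lemma \ref{novanova}: the locus where $I,A,\ldots,A^{n-1}$ are linearly independent is a nonempty (hence Zariski-dense) open subset of $M_n$, and on this set the relation $\sum_{i=0}^{n-1}\mu_i(A)A^i=0$ forces $\mu_i(A)=0$; since each $\mu_i$ is a polynomial function, it then vanishes identically.
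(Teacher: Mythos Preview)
Your proof is correct and follows essentially the same approach as the paper's own proof: both reduce to a remainder of $x$-degree less than $n$ and then invoke Lemma \ref{novanova} together with the $M_n$-local linear independence of $1,x,\ldots,x^{n-1}$. The only cosmetic difference is that the paper performs the reduction by induction on the degree (subtracting $\lambda_m(x)x^{m-n}q_n(x)$ at each step), whereas you invoke Euclidean division in $\mathcal{C}[x]$ by the monic polynomial $q_n$ once and for all; these are the same argument.
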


\begin{proof}
Let $
p(x) = \sum_{i=0}^m \lambda_i(x)x^i$. The proof is by induction on $m$. Since $1,x,\ldots,x^{n-1}$ are $M_n$-locally linearly independent, 
we may assume that $m\ge n$ by Lemma \ref{novanova}. Note that $p(x)-\lambda_m(x)x^{m-n}q_n(x)$
is a quasi-identity for which the induction assumption is applicable. Therefore $p(x)-\lambda_m(x)x^{m-n}q_n(x) =
  r_1(x)q_n(x)$
for some $r_1(x)$, and hence   $p(x)= \bigl(\lambda_m(x)x^{m-n}+
  r_1(x)\bigr)q_n(x)$.
  \end{proof}


What about quasi-polynomials of two variables? At least for $2\times 2$ matrices, the answer comes easily.

\begin{proposition}\label{twoind}
If a  quasi-polynomial of two variables  $P=P(x,y)$ is a quasi-identity of $M_2$, then $P$ is a consequence of the Cayley-Hamilton identity.
\end{proposition}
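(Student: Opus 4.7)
My plan is to reduce $P$ modulo the T-ideal $(q_2)$, show that the reduction lies in the $\mathcal C$-linear span of the four monomials $1,\,x,\,y,\,xy$, and then invoke Lemma~\ref{novanova} together with the $M_2$-local linear independence of these four monomials.

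The reduction rests on three congruences modulo $(q_2)$. From $q_2(x)$ and $q_2(y)$ I get
$$x^2\equiv \tr(x)x-\det(x),\qquad y^2\equiv \tr(y)y-\det(y).$$
From the polarized Cayley--Hamilton identity $Q_2(x,y)$ I get
$$yx\equiv -xy+\tr(x)y+\tr(y)x+\tr(xy)-\tr(x)\tr(y).$$
Finally, applying $q_2$ to the element $xy$ (which is an admissible substitution in the T-ideal) yields
$$(xy)^2\equiv \tr(xy)\,xy-\det(x)\det(y),$$
using $\det(xy)=\det(x)\det(y)$. The first two rules let me rewrite any monomial in $x,y$ as an $\mathcal C$-linear combination of alternating monomials of the form $(xy)^k$, $(xy)^k x$, $y(xy)^k$, or $y(xy)^k x$. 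A direct computation (substituting $x^2\equiv \tr(x)x-\det(x)$ into $x\cdot yx$) gives
$$xyx\equiv \det(x)y+\tr(xy)x-\det(x)\tr(y),$$
and symmetrically for $yxy$; combined with the third rule, an induction on length then shows that every alternating monomial collapses to a $\mathcal C$-linear combination of $1,x,y,xy$.

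Consequently there is an $R\in(q_2)$ with
$$P-R=\lambda_0+\lambda_1 x+\lambda_2 y+\lambda_3\,xy$$
for some $\lambda_i\in\mathcal C$, and since $P,R\in\mathfrak I_2$ the right-hand side is a quasi-identity of $M_2$. The monomials $1,x,y,xy$ are $M_2$-locally linearly independent: evaluating at $x=e_{11}$, $y=e_{12}+e_{21}$ produces $xy=e_{12}$, and $\{I,\,e_{11},\,e_{12}+e_{21},\,e_{12}\}$ is linearly independent in $M_2$. Hence Lemma~\ref{novanova} forces each $\lambda_i$ to vanish, and so $P=R\in(q_2)$, i.e., $P$ is a consequence of the Cayley--Hamilton identity.

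The main obstacle in this plan is the combinatorial bookkeeping needed to confirm that the reduction of arbitrarily long alternating products $xyxy\cdots$ really does terminate inside the four-dimensional $\mathcal C$-span of $1,x,y,xy$, rather than producing new monomials at each step. The three congruences above are exactly what is needed: the $yx$ rule straightens the alphabet, the $x^2,y^2$ rules contract repeats, and the $(xy)^2$ rule is the crucial one that keeps the normal form finite-dimensional over $\mathcal C$. Everything else is an induction on word length.
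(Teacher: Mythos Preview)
Your proof is correct and follows essentially the same approach as the paper's: the paper simply asserts as ``an easy exercise'' that every two-variable quasi-polynomial reduces modulo the T-ideal $(Q_2)$ to $\lambda_0+\lambda_1 x+\lambda_2 y+\lambda_3 xy$, and then applies Lemma~\ref{novanova} exactly as you do. You have carried out that exercise in detail via the congruences for $x^2$, $y^2$, $yx$, and $(xy)^2$, and supplied an explicit witness for the $M_2$-local linear independence of $1,x,y,xy$, but the architecture of the argument is identical.
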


\begin{proof}
It is an easy exercise to show that any quasi-polynomial of two variables $P=P(x,y)$  can be written as $P = \lambda_0 + \lambda_1 x + \lambda_2 y + \lambda_3 xy + R$ where $R$ lies in the T-ideal   generated by $Q_2$.  Thus, if $P$ is a quasi-identity, then each $\lambda_i=0$ by Lemma \ref{novanova}, so that $P=R$ is a consequence of the Cayley-Hamilton identity.  
\end{proof}

Now one may wonder what should be  the degree  of a quasi-identity that may not follow from the Cayley-Hamilton identity.  We first record a simple result which is a byproduct of the general theory of functional identities.

\begin{proposition}\label{threeind}
If $\sum\lambda_M M$ is a quasi-identity of $M_n$ and $\deg(\lambda_M) + \deg(M) < n$ for every $M$, then each $\lambda_M=0$.
\end{proposition}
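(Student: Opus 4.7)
The plan is to reduce the identity to a standard multilinear functional identity and then invoke the freeness results for $M_n$ from the general theory of functional identities.

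First, by the substitution $x_k\mapsto t\,x_k$ and comparison of powers of $t$, I would reduce to the case where $P$ is homogeneous of some fixed total degree $d<n$. Decomposing further into multihomogeneous pieces (by scaling each variable separately) and then polarizing every variable that appears with multiplicity, I may assume $P$ is multilinear in $d$ variables $z_1,\ldots,z_d$. Every term of the resulting multilinear $P$ then has the shape
$$c\,\cdot\,\prod_{k\in S}(z_k)_{i_k j_k}\,\cdot\,z_{\sigma(1)}\cdots z_{\sigma(d-|S|)},$$
with a scalar $c\in F$, a subset $S\subseteq\{1,\ldots,d\}$, entry indices $(i_k,j_k)\in\{1,\ldots,n\}^2$ for $k\in S$, and a bijection $\sigma$ of the complement of $S$.

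Next, I would group the summands by the matrix factor $z_{\sigma(1)}\cdots z_{\sigma(d-|S|)}$, so that the identity $P\equiv 0$ on $M_n$ takes the form of a standard multilinear functional identity in at most $d$ matrix slots, whose coefficient functions are polynomials in the entries of the variables outside the slots. Because $d<n$, we are precisely in the range in which the general freeness theorem for $M_n$ \cite[Ch.~4]{FIbook} forces such an identity to admit only standard solutions, i.e.\ every coefficient function must vanish identically. Since the monomials $\prod_{k\in S}(z_k)_{i_k j_k}$ are linearly independent as polynomial functions on $M_n^{|S|}$, the vanishing of each coefficient function yields $c=0$ for every term. Undoing the polarization and the scaling then returns $\lambda_M=0$ for each monomial $M$ in the original quasi-polynomial.

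The main obstacle is translating cleanly between the quasi-polynomial language and the functional-identity framework: after multilinearization one must recast each term of $P$ as a term of a standard FI, verify that our polynomial-type coefficient functions fit the hypotheses of the freeness theorem (which is immediate, since FIs in general allow arbitrary coefficient functions), and identify FI-theoretic standard solutions with the vanishing of every scalar $c$. Each of these steps is routine once the correspondence is fixed, but writing out the precise dictionary is the technical heart of the argument; it is in this translation that the strict inequality $d<n$ is used, matching exactly the numerical hypothesis needed for $M_n$ to be sufficiently free.
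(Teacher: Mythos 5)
Your argument reconstructs what the paper's one-line proof encapsulates: the paper simply cites \cite[Corollary 2.23, Lemma 4.4]{FIbook}, which together say exactly that $M_n$ is sufficiently free for a multilinear quasi-identity of total degree $<n$ to admit only the zero solution. Your homogenization and polarization steps make explicit the (implicit) reduction to the multilinear setting in which those cited results apply, so this is essentially the same approach.
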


\begin{proof}
Apply, for example, \cite[Corollary 2.23, Lemma 4.4]{FIbook}.
\end{proof}

The  question arises  what can be said about quasi-identities of degree $n$.
The multilinearization process works for the quasi-polynomials just as it works for the ordinary noncommutative polynomials.  The multilinear quasi-polynomials therefore deserve a special attention. By saying that 
 $P=P(x_1,\ldots,x_n)$ is  {\em multilinear} of degree $n$ we mean, of course, that $P$ consists of summands of the form $\lambda(x_{i_1},\ldots,x_{i_k}) x_{i_{k+1}}\cdots x_{i_{n}} $ where
$\{1,\ldots,n\}$ is the disjoint union of $\{i_1,\ldots,i_k\}$ and $\{i_{k+1},\ldots,i_m\}$, and   $\lambda(x_{i_1},\ldots,x_{i_k})\in \mathcal C$ is multilinear, i.e., it is a linear combination of monomials of the form $x_{s_1t_1}^{(i_1)}x_{s_2t_2}^{(i_2)}\cdots x_{s_kt_k}^{(i_k)}$. A basic example is the Cayley-Hamilton polynomial $Q_n$.

\begin{theorem}\label{multil}
Every multilinear quasi-identity of $M_n$ of degree $n$ is a scalar multiple of $Q_n$.
\end{theorem}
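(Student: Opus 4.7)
The plan proceeds in three stages: first use the one-variable case to extract a scalar multiple of $Q_n$ from $P$; then use polarization to kill the symmetric part of the remainder; and finally use the local linear dependence machinery of Section~\ref{sec5} to force the remainder to vanish.

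First I would substitute $x_1=\cdots=x_n=x$ into $P$ to obtain a one-variable quasi-identity $P(x,\ldots,x)$ of total degree $n$. Proposition~\ref{oneind} then gives $P(x,\ldots,x)=r(x)q_n(x)$ for some quasi-polynomial $r(x)$; since $q_n(x)$ is already of total degree $n$, the factor $r(x)$ must have total degree $0$, so $r(x)=c$ for some $c\in F$. Using the identity $Q_n(x,\ldots,x)=n!\,q_n(x)$, the quasi-polynomial $\widetilde{P}:=P-\tfrac{c}{n!}Q_n$ is again a multilinear quasi-identity of degree $n$ in $n$ variables and satisfies $\widetilde{P}(x,\ldots,x)=0$ as a quasi-polynomial.

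Next I would observe that the full symmetrization $\widetilde{P}_{\mathrm{sym}}:=\tfrac{1}{n!}\sum_{\sigma\in S_n}\widetilde{P}(x_{\sigma(1)},\ldots,x_{\sigma(n)})$ is a symmetric multilinear quasi-identity whose diagonal value equals $\widetilde{P}(x,\ldots,x)=0$; by standard polarization in characteristic zero, a symmetric multilinear quasi-polynomial is determined by its diagonal value, so $\widetilde{P}_{\mathrm{sym}}=0$. Because $Q_n$ is symmetric, the intersection of the T-ideal $(Q_n)$ with the multilinear quasi-polynomials of degree $n$ in $n$ variables is the one-dimensional space $F\cdot Q_n$; so if I can prove $\widetilde{P}\in(Q_n)$, it will follow that $\widetilde{P}=c'Q_n$ for some $c'\in F$, and combining with $\widetilde{P}_{\mathrm{sym}}=0$ and the symmetry of $Q_n$ will give $c'=0$, so $\widetilde{P}=0$ and $P=\tfrac{c}{n!}Q_n$ as desired.

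Finally, to establish $\widetilde{P}\in(Q_n)$ I would write $\widetilde{P}=\sum_M\lambda_M M$ over those noncommutative monomials $M$ with $\lambda_M\ne 0$ (the empty sum being trivial) and invoke Lemma~\ref{novanova} to conclude that these $M$'s are $M_n$-locally linearly dependent. The plan is then to iterate the mechanism from the proof of Theorem~\ref{the56}: select a maximal $M_n$-locally linearly independent subset $M_1,\ldots,M_t$ of these monomials, pick a remaining dependent $M_0$, use Theorem~\ref{loc} to produce central polynomials $c_0,\ldots,c_t$ (with $c_0$ not a polynomial identity) such that $c_0M_0+\sum_{i\ge 1}c_iM_i$ is a polynomial identity and hence lies in $(Q_n)$ by the SFT, and compare $c_0\widetilde{P}$ with $\lambda_{M_0}\bigl(c_0M_0+\sum c_iM_i\bigr)$ to produce a quasi-identity with one fewer nonzero monomial, so that induction on the number of monomials yields $c_0\widetilde{P}\in(Q_n)$. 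The hard part will be upgrading $c_0\widetilde{P}\in(Q_n)$ to $\widetilde{P}\in(Q_n)$: the extraneous central factor $c_0$ must be removed, which I would handle by following the factorization argument at the end of the proof of Theorem~\ref{the56}, splitting each coefficient $\lambda_M$ into its $G$-invariant part $\lambda_M^G$ and a common non-invariant factor that can be canceled; alternatively one can try to exploit the exact sequence~\eqref{kk} and the support property from Theorem~\ref{cp2}.
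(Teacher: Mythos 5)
Your route is genuinely different from the paper's: the paper argues directly, substituting carefully chosen tuples of matrix units into $P$ to pin down the coefficients $\lambda_{k\sigma}$ explicitly through a three-claim induction, and never touches the local-linear-dependence machinery. Your steps 1 and 2 are correct and nicely set the problem up — the degree count does force $r(x)$ to be a scalar, so $\widetilde{P}:=P-\frac{c}{n!}Q_n$ is again multilinear of degree $n$ with $\widetilde{P}(x,\ldots,x)=0$, and hence $\widetilde{P}_{\mathrm{sym}}=0$ by polarization. The claim that the multilinear degree-$n$ slice of the T-ideal $(Q_n)$ equals $F\cdot Q_n$ is also true, though it is stated without justification and needs a short check that a degree-one substitution into $Q_n$ either reproduces a scalar multiple of $Q_n$ or, when the substituted element is a scalar-valued linear form (hence a multiple of $I$), gives zero.

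The genuine gap is in step 3, and you flag it yourself: iterating Theorem \ref{the56}/Remark \ref{rem55} only yields $c\,\widetilde{P}\in(Q_n)$ for some nontrivial central polynomial $c$ — this is exactly the content of Remark \ref{rem55} and Theorem \ref{cp2} — and removing the central factor $c$ is precisely where the real work lies. The factorization device at the end of Theorem \ref{the56} does not transfer, because there the hypothesis that \emph{all but one} of the monomials are $M_n$-locally linearly independent produces the simultaneous relations $c_0\lambda_i=c_i\lambda_0$, and it is these relations that force a common non-invariant factor $\lambda'$ to split off of every coefficient. In your iterated setting you accumulate a product of several $c_0$'s and no such uniform proportionality among all the $\lambda_M$'s is available, so the $\lambda_M^G\lambda_M'$ splitting does not go through. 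The appeal to the support property of $\mathfrak{I}_n/(Q_n)$ is likewise inconclusive: Theorem \ref{cp2} only asserts that some power of $c$ kills $\widetilde{P}$ modulo $(Q_n)$, which you already have; it gives no mechanism to remove the torsion. Indeed the paper's Section \ref{sec3} shows that $\mathfrak{I}_n/(Q_n)\ne 0$, so ``divide out by $c$'' cannot hold in general, and any correct argument for $\widetilde{P}\in(Q_n)$ must exploit the multilinearity and the precise degree $n$ — which is what the paper's explicit matrix-unit computation does and what your sketch does not supply.
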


\begin{proof} Let  $S_{n,k}$, $1\le k\le n$, 
 denote the set of all permutations $\sigma\in S_n$ such that $\sigma(1)<\sigma(2)<\cdots<\sigma(k)$. For convenience we also set $S_{n,0}=S_n$.
  Note that a multilinear quasi-polynomial $P$ of degree $n$ can be written as
$$P(x_1,\dots,x_n)=\sum_{k=0}^{n}\sum_{\sigma\in S_{n,k}} \lambda_{k\sigma}(x_{\sigma(1)},\dots,x_{\sigma(k)})x_{\sigma(k+1)}\cdots x_{\sigma(n)}$$
(here, $\lambda_{0\sigma}$ are scalars). By $e_{ij}$ we denote the matrix units in $M_n$.

 We assume that $P$ is a quasi-identity, and proceed by a series of claims. 

\smallskip
{\bf Claim 1}. {\em  For all $\sigma\in S_{n,k}$, $1\leq k\leq n$, and all distinct $1\leq i_1,\dots,i_k\leq n$,    we have 
$$\lambda_{k\sigma}(e_{11},\dots,e_{kk})=\lambda_{k\sigma}(e_{i_1i_1},\dots,e_{i_ki_k}) =-\lambda_{k-1,\sigma}(e_{11},\dots,e_{k-1,k-1}).$$}

The proof is by induction on $k$.  First, take  $0\leq j\leq n-1$ and substitute  
$$e_{1+j,1+j},e_{1+j,2+j},e_{2+j,3+j},\dots,e_{n-1+j,n+j}$$
  (with addition modulo $n$) 
for  $x_{\sigma(1)},\dots,x_{\sigma(n)}$ in $P$. Considering the coefficient at $e_{1+j,n+j}$ we get
 $\lambda_{0\sigma}+\lambda_{1\sigma}(e_{1+j,1+j})=0$. This implies the truth of Claim 1 for $k=1$.  Let $k > 1$ and take $\sigma \in S_{n,k}$. 
Choose  a subset of $\{1,\dots,n\}$ with $k-1$ elements,  $\{i_{n-k+2},\dots,i_n\}$,  and let  $\{j_1,\dots,j_{n-k+1}\}$ be its complement.
Let us substitute
$$
e_{i_{n-k+2},i_{n-k+2}},\dots,e_{i_n,i_n},\,\,
e_{{j_1},j_1},e_{j_1,j_2},
e_{j_2,j_3},\dots,e_{j_{n-k},j_{n-k+1}}
$$
for $x_{\sigma(1)},\dots,x_{\sigma(n)}$, respectively, in $P$.  Similarly as above, this time by considering the coefficient at $e_{j_1,j_{n-k+1}}$,  we obtain 
$$
\lambda_{k-1,\sigma}(e_{i_{n-k+2},i_{n-k+2}},\dots,e_{i_n,i_n})+
\lambda_{k\sigma}(e_{i_{n-k+2},i_{n-k+2}},\dots,e_{i_n,i_n},e_{{j_1},j_1})=0.
$$
The desired conclusion follows from the  induction hypothesis.

\smallskip
{\bf Claim 2}. {\em  For all $\sigma,\tau \in S_{n,k}$, $0\leq k\leq n-1$, and all distinct $1\leq i_1,\dots,i_k\leq n$,    we have 
 $$\lambda_{k\sigma}(e_{11},\dots,e_{kk})=\lambda_{k\tau}(e_{i_1i_1},\dots,e_{i_ki_k}).$$}
Evaluating $P$ at $e_{11},\dots,e_{nn}$ results in 
$$\lambda_{n-1,\sigma_i}(e_{11},\dots,e_{i-1,i-1},e_{i+1,i+1},\dots,e_{nn})=\lambda_{n-1,\id}(e_{11},\dots,e_{n-1,n-1})$$
 for all $1\leq i\leq n-1$, where $\sigma_i$ stands for the cycle $(i\,\, i+1\, \dots \,n)$. 
Accordingly, since $S_{n,n-1} $ consists of $\id$ and all  $\sigma_i$, $1\leq i\leq n-1$,
 the case $k=n-1$ follows from Claim 1. We may now assume that $k < n-1$ and that Claim 2 holds for  $k+1$. Take $\sigma\in S_{n,k}$. If $\sigma\in S_{n,k+1}$ then 
$$\lambda_{k\sigma}(e_{11},\dots,e_{kk})=-\lambda_{k+1,\sigma}(e_{11},\dots,e_{k+1,k+1})$$
 by Claim 1.
If $\sigma\not\in S_{n,k+1}$ there exists $1\leq i\leq k$ such that $\sigma(k+1)<\sigma(i)$.  Substituting 
 $$e_{11},\dots,e_{kk},e_{k+1,k+1},e_{k+1,k+2},e_{k+2,k+3},\dots,e_{n-1,n}$$
for $x_{\sigma(1)},\dots,x_{\sigma(n)}$  in $P$ we infer that for a certain permutation $\tau$ (specifically, $\tau=\sigma\circ (k+1\;k\;\dots\;i+1\;i)$) we have
$$\lambda_{k\sigma}(e_{11},\dots,e_{kk})=-\lambda_{k+1,\tau}(e_{11},\dots,e_{i-1,i-1},e_{k+1,k+1},e_{i+1,i+1},\dots,e_{k-1,k-1}).$$
 Since every $\lambda_{k\sigma}(e_{11},\dots,e_{kk})$ is associated to an evaluation of $\lambda_{k+1,\tau}$, Claim 2 follows by  the induction hypothesis and Claim 1.

\smallskip
{\bf Claim 3}.   $P=\lambda_{0,id} Q_n$.
\smallskip

By Claim 2 we have $\lambda_{0\sigma}=\lambda_{0\tau}$ for all $\sigma,\tau\in S_n$. Accordingly, $R := P-\lambda_{0,id} Q_n$ 
does not involve summands of the form $\mu x_{\sigma(1)}\ldots x_{\sigma(n)}$, $\mu\in F$, and can be therefore 
written as 
$$R(x_1,\dots,x_n)=\sum_{k=1}^{n}\sum_{\sigma\in S_{n,k}} \mu_{k\sigma}(x_{\sigma(1)},\dots,x_{\sigma(k)})x_{\sigma(k+1)}\cdots x_{\sigma(n)}.$$
We must prove that $R=0$, i.e., each $\mu_{k\sigma} =0$. We proceed by induction on $k$. For $k=0$ this holds by the hypothesis, so let $k > 0$. 
It suffices to show that $\mu_{k\sigma}(e_{i_1j_1},\dots,e_{i_kj_k})=0$ for arbitrary matrix units $e_{i_1j_1},\dots,e_{i_kj_k}$.
Choose distinct $l_1,\dots,l_{n-k}$ such that $l_s\ne i_t$ for all $s,t$.  
Substitute  $$e_{i_1j_1},\dots,e_{i_kj_k},e_{l_1l_2},e_{l_2l_3},\dots,e_{l_{n-k}i_1}$$ 
 for $x_{\sigma(1)},\dots,x_{\sigma(n)}$  in $P$.
 There is only one way to factorize $e_{l_1i_1}$ as a product of at most $n-k$  chosen matrix units, i.e.,  $e_{l_1i_1}=e_{l_1l_2}e_{l_2l_3}\cdots e_{l_{n-k}i_1}$. By induction hypothesis it thus follows that $\mu_{k\sigma}(e_{i_1j_1},\dots,e_{i_kj_k})=0$.
\end{proof}

\subsection{Specht problem for quasi-identities}
In this paragraph we finally prove a version of Specht problem for quasi-identities, i.e., we show that 
  $\mathfrak{I}_n$  is finitely generated as a T-ideal. 
 As it is well-known, Kemer \cite{Ke} has shown that for polynomial identities such a question has a positive answer (in characteristic $0$). 
  In our case the answer  is also positive since  we can apply the classical method of  {\em primary covariants} of Capelli and Deruyts (dressed up as Cauchy formula and highest weights), cf.  \cite[Chapter 3]{Proc3} to which we also refer for the statements used in the proof.
 \begin{theorem}
 The ideal $\mathfrak{I}_n$ of all quasi-identities of $M_n$  is finitely generated, as a T-ideal, 
 by elements which depend on at most  $2n^2$  variables.
\end{theorem}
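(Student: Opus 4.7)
The plan is to exploit the natural action of $GL_\infty=GL(V)$ on $\mathcal{C}\langle X\rangle$, where $V$ is the $F$-vector space with basis $X=\{x_k\}$. Linear substitutions of the $x_k$, together with the induced action on the coordinates $x_{ij}^{(k)}$ coming from the functoriality of $M_n(F)^{|X|}$ in $V$, make $\mathcal{C}\langle X\rangle\cong\mathcal{C}\otimes F\langle X\rangle$ into a locally finite polynomial $GL(V)$-module. Since evaluation at $M_n$-tuples is $GL(V)$-equivariant, the T-ideal $\mathfrak{I}_n$ is $GL(V)$-stable. The strategy is classical: decompose into $GL(V)$-isotypic components and invoke the Capelli--Deruyts primary covariant theorem, after bounding the relevant partition heights via Cauchy's formula.

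First I would write $\mathcal{C}\langle X\rangle=\bigoplus_\lambda W_\lambda$ as the $GL(V)$-isotypic decomposition, with $W_\lambda$ of type $S_\lambda(V)$. By the classical primary covariants theorem (\cite[Chapter 3]{Proc3}), each $W_\lambda$ is $GL(V)$-generated by its finite-dimensional highest-weight subspace, which is supported on only the first $h=\mathrm{ht}(\lambda)$ variables; in particular $\mathfrak{I}_n\cap W_\lambda$ is $GL(V)$-generated by elements using at most $h$ variables.

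Next I would bound the heights $\mathrm{ht}(\lambda)$ of partitions appearing in the quotient $\mathcal{C}\langle X\rangle/\mathfrak{I}_n$. Since $\mathfrak{I}_n\supseteq\mathcal{C}\otimes\mathrm{id}(M_n)$, this quotient is itself a $GL(V)$-equivariant quotient of $\mathcal{C}\otimes F\langle\xi_k\rangle$. Cauchy's formula applied to $\mathcal{C}\cong S^\bullet(M_n\otimes V)$ bounds the $GL(V)$-heights in $\mathcal{C}$ by $n^2$; and since $F\langle\xi_k\rangle\subseteq M_n(\mathcal{C})$ as a $GL(V)$-submodule (with $GL(V)$ acting trivially on $M_n$), its heights are likewise bounded by $n^2$. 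By the Littlewood--Richardson rule, heights in the tensor product $\mathcal{C}\otimes F\langle\xi_k\rangle$ are bounded by $n^2+n^2=2n^2$, and hence every $W_\lambda$ with $\mathrm{ht}(\lambda)>2n^2$ lies entirely in $\mathfrak{I}_n$.

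Combining these steps, generators of $\mathfrak{I}_n$ in at most $2n^2$ variables come from (a) the primary covariants of $\mathfrak{I}_n\cap W_\lambda$ for $\mathrm{ht}(\lambda)\leq 2n^2$, each using at most $2n^2$ variables by the first step; and (b) a single low-variable quasi-identity---for instance, the one expressing linear dependence of any $n^2+1\leq 2n^2$ matrices in $M_n$ via signed maximal subdeterminants---that, together with T-ideal operations, recovers the whole high-height part $\bigoplus_{\mathrm{ht}(\lambda)>2n^2}W_\lambda$. Finite generation as a T-ideal then follows from finite-dimensionality of the multiplicity spaces combined with Noetherianity of the polynomial $GL(V)$-module category. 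The main obstacle is step (b): the Capelli--Deruyts theorem by itself gives only $\mathrm{ht}(\lambda)$-variable generators inside each isotypic component, so to reach $W_\lambda$ with $\mathrm{ht}(\lambda)>2n^2$ from quasi-identities in $\leq 2n^2$ variables one must exploit substitutions in an essential way, \emph{beyond} the mere linear $GL(V)$-action.
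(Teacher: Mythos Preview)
Your machinery (the $GL(V)$-action, Cauchy's formula, highest-weight/primary-covariant generators, Littlewood--Richardson to add heights) is exactly the machinery the paper uses. The difference is \emph{where} you apply the height bound, and this is precisely the gap you yourself flag as ``the main obstacle''.

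You bound the heights that survive in the quotient $\mathcal{C}\langle X\rangle/\mathfrak{I}_n$ by $2n^2$. That tells you that every isotypic component $W_\lambda$ with $\mathrm{ht}(\lambda)>2n^2$ lies entirely inside $\mathfrak{I}_n$, but it does \emph{not} tell you that those components are T-ideal--generated by low-variable elements of $\mathfrak{I}_n$. Your proposed patch (b)---a single ``linear dependence of $n^2+1$ matrices'' quasi-identity plus T-ideal operations---is not justified: there is no argument offered that the T-ideal it generates swallows all high-height $W_\lambda$, and in fact the polynomial-identity part of $\mathfrak{I}_n$ alone already contains components of arbitrarily large height in $F\langle X\rangle$ which are not obviously consequences of that antisymmetric quasi-identity. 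So the proposal, as written, has a real hole at step (b).

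The paper avoids this hole by bounding heights on the \emph{other} side. It first quotients by $(Q_n)$, which is T-ideal--generated by a single element in $n\le 2n^2$ variables. Then, via Proposition~3.1 and Corollary~3.4, the remaining piece $\mathfrak{I}_n/(Q_n)\cong\ker\pi$ embeds $GL(V)$-equivariantly into $M_n(\mathcal{C}_x\otimes_{\mathcal{T}_n}\mathcal{C}_y)$. Now Cauchy bounds the heights in $\mathcal{C}_x$ and in $\mathcal{C}_y$ separately by $n^2$, so (Littlewood--Richardson) every $S_\gamma(V)$ occurring in $\mathcal{C}_x\otimes_{\mathcal{T}_n}\mathcal{C}_y$---and hence in $\ker\pi$---already has $\mathrm{ht}(\gamma)\le 2n^2$. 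Thus the highest-weight argument applies directly to $\ker\pi$ with no ``high-height leftover'' to dispose of; combined with $Q_n$ this gives T-ideal generators of $\mathfrak{I}_n$ in at most $2n^2$ variables. Finite generation then follows because, after restricting to $2n^2$ variables, $\ker\pi$ is a finitely generated module over a finitely generated algebra.

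In short: replace your height bound on $\mathcal{C}\langle X\rangle/\mathfrak{I}_n$ by a height bound on $\mathfrak{I}_n/(Q_n)$, obtained from its embedding into $M_n(\mathcal{C}_x\otimes_{\mathcal{T}_n}\mathcal{C}_y)$; then your step (b) becomes unnecessary and the argument closes.
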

 \begin{proof}
First of all,  if we impose the Cayley-Hamilton identity, we are reduced to study the problem  for the space $\mathcal C\langle X\rangle/(Q_n)$ isomorphic to $\ker\,\pi\subset   C_x\tnz_{\T_n}  \mathcal T_n\langle \xi_k\rangle \subset M_n(C_x\tnz_{\T_n}  C_y)$. Instead of considering all possible we consider only linear substitutions of variables,  that is,  we consider all these spaces as representations of the infinite linear group.

Now  we use  the language of symmetric algebras;  the space  $C_x$ equals  $S[M_n^*\otimes V]$ where $V=\oplus_{i=1}^\infty Fe_i$ is an infinite dimensional vector space  over which the infinite linear group $G_\infty$ acts. 

By Cauchy's formula  we have
$$S[M_n^*\otimes V]=\oplus_\lambda  S_\lambda (M_n^*)\otimes S_\lambda (V),$$ where $\lambda$ runs over all partitions with at most $n^2$  columns and $S_\lambda (V)$ is the corresponding Schur functor. 
By representation theory  the tensor product  $S_\lambda (V)\otimes S_\mu (V)$   of two such representations is a  sum of representations $S_\gamma  (V)$ where $\gamma  $ runs over  partitions with at most $2n^2$  columns.

Hence we have that $C_x\tnz_{\T_n}  C_y $, which is a quotient of  $C_x\tnz_{F}  C_y $,  is a sum of  representations $S_\lambda (M_n^*)\otimes S_\mu (V)$ where  $\mu$ has at most $2n^2$  columns.  Now any representation  $S_\gamma  (V)$ is irreducible under $G_\infty$  and generated by a highest weight vector.  If $\gamma$ has $k$ columns such a highest weight vector on the other hand lies in $S_\gamma  (\oplus_{i=1}^k Fe_i)$.

This means that under linear substitution of variables any element in $M_n(C_x\tnz_{\T_n}  C_y)$, and hence also in $ \ker\,\pi$,  is obtained from elements depending  on at most $2n^2$ variables.

Finally, if we restrict the number of variables to a finite number $m$, the corresponding space $ \ker\,\pi_m$ is a finitely generated module over a finitely generated algebra, and the claim follows.
\end{proof}

\bigskip

{\bf Acknowledgement.} The authors would like to thank  Hanspeter Kraft for an enlightening conversation and for providing some computer assisted computations.

\end{document}